\numberwithin{equation}{section} 
\theoremstyle{definition}
\newtheorem{theorem}{Theorem}[section]
\newtheorem{corollary}[theorem]{Corollary}
\newtheorem{definition}[theorem]{Definition}
\newtheorem{example}[theorem]{Example}
\newtheorem{lemma}[theorem]{Lemma}
\newtheorem{proposition}[theorem]{Proposition}
\newtheorem{remark}[theorem]{Remark}
\begin{document}

\title{Schwarz type lemmas for pseudo-Hermitian manifolds}
\author{Yuxin Dong\footnote{Supported by NSFC grant No. 11771087, and LMNS, Fudan}, \ Yibin Ren\footnote{Supported by NSFC grant No. 11801517}, \ Weike Yu}

\date{}
\maketitle

\begin{abstract}
In this paper, we consider some generalized holomorphic maps between
pseudo-Hermitian manifolds. These maps include the \emph{CR} maps and the
transversally holomorphic maps. In terms of some sub-Laplacian or Hessian
type Bochner formulas, and comparison theorems in the pseudo-Hermitian
version, we are able to establish several Schwarz type results for both the 
\emph{CR} maps and the transversally holomorphic maps between
pseudo-Hermitian manifolds. Finally, we also discuss the \emph{CR} hyperbolicity
problem for pseudo-Hermitian manifolds.
\end{abstract}

\section*{Introduction}

The Schwarz lemma, reformulated by Pick \cite{[Pi]}, says that every holomorphic
map from a unit disc $D$ of $\mathbb{C}$ into itself is distance-decreasing with respect to the Poincar\'{e}
distance. This lemma is at the heart of geometric function theory, and has
been generalized to holomorphic maps between higher dimensional complex
spaces (\cite{[Ch]}, \cite{[Lu]}, \cite{[Ya]}, \cite{[CCL]}, \cite{[Ro]}, etc.), and also to quasiconformal
harmonic maps, harmonic maps with bounded (generalized) dilatation (\cite{[GI]},
\cite{[GIP]}, \cite{[Sh]}, etc.) between Riemannian manifolds in various ways. Some
Schwarz type lemmas have also been established in \cite{[Ta]} for almost
holomorphic maps between almost Hermitian manifolds and in \cite{[CZ]} for some
generalized harmonic maps.

The present paper is devoted to the study of Schwarz type lemmas for \emph{CR%
} manifolds. We will only consider \emph{CR} manifolds of hypersurface type
which admit positive definite pseudo-Hermitian structures. These are the
so-called pseudo-Hermitian manifolds (see \S \ref{section2} for the detailed definition).
Let $(M^{2m+1},H,J,\theta )$ denote a pseudo-Hermitian manifold of dimension 
$2m+1$, where $(H,J)$ is a \emph{CR} structure of type $(m,1)$, and $\theta $
is a pseudo-Hermitian strucutre on $M$. We find that the pseudo-Hermitian
manifold carries a rich geometric structure, including an almost complex
structure $J$ on $H$, a positive definite Levi form $L_{\theta }$ on $H$
induced by $\theta $ and $J$, and a $1$-dimensional foliation $\mathcal{F}%
_{\xi }$ called the Reeb foliation on $M$. In terms of the structures $\{J,H,%
\mathcal{F}_{\xi }\}$, one may thus introduce two important classes of
generalized holomorphic maps between any two pseudo-Hermitian manifolds,
that is, the \emph{CR} maps and the transversally holomorphic maps (see \S \ref{section3}
for their definitions). These maps are the main objects of our study. Note
that the pair $(H,L_{\theta })$ is a $2$-step sub-Riemannian structure,
which induces a Carnot--Carath\'{e}odory distance $r_{CC}$ on $M$. Besides,
the pseudo-Hermitian structure $\theta $ also induces both a volume form $%
\theta \wedge (d\theta )^{m}$ and a "horizontal volume form" $(d\theta )^{m}$
on $M$. These geometric data provide us a basis to investigate the Schwarz
type problem on pseudo-Hermitian manifolds from either the metric or the
volume aspect.

In this paper, we will give several Schwarz type lemmas for both the \emph{CR} maps and the transversally holomorphic maps. For this purpose, we need to
derive some sub-Laplacian or Hessian type Bochner formulas for these maps,
and establish suitable comparison theorems in the pseudo-Hermitian version.
Using these Bochner formulas and comparison theorems, we are able to
establish some distance-volume-decreasing properties (up to a constant) for
both the \emph{CR} maps and the transversally holomorphic maps under some
curvature lower bound for the source manifold and some negative curvature
upper bound for the target manifold. Our results may be regarded as the
realization that "negative curvature in the horizontal distribution of the
target \emph{CR} manifold restricsts these generalized holomorphic maps". In
view of these Schwarz type lemma, we also introduce some pseudo-distances
for \emph{CR} manifolds and investigate the correspongding phenomenon of
hyperbolicity.

\section{Pseudo-Hermitian geometry} \label{section2}

In this section, we introduce some notions and notations in pseudo-Hermitian
geometry (cf. \cite{[DT]}, \cite{[BG]} for details).

\begin{definition} \label{definition2.1}
Let $M$ be a real $2m+1$ dimensional orientable $%
C^{\infty }$ manifold. A \emph{CR} structure on $M$ is a complex subbundle $%
T_{1,0}M$ of the complexified tangent bundle $TM\otimes C$, of complex rank $%
m$, satisfying
\begin{enumerate}[(i)]
  \item $T_{1,0}M\cap T_{0,1}M=\{0\}$, $T_{0,1}M=\overline{T_{1,0}M}$;
  \item $[\Gamma (T_{1,0}M),\Gamma (T_{1,0}M)]\subseteq \Gamma (T_{1,0}M)$.
\end{enumerate}
Then the pair $(M,T_{1,0}M)$ is called a \emph{CR} manifold.
\end{definition}

The complex subbundle $T_{1,0}M$ corresponds to a real subbundle of $TM$%
\begin{equation}
H= \operatorname{Re} \{T_{1,0}M\oplus T_{0,1}M\} \label{equation2.1}
\end{equation}%
which is called the Levi distribution. It carries a natural complex
structure $J$ defined by $J(X+\overline{X})=i(X-\overline{X})$.
Equivalently, the \emph{CR} structure may be described by the pair $(H,J)$
too.

Let $E$ be the conormal bundle of $H$ in $T^{\ast }M$, whose fiber at each
point $x\in M$ is given by 
\begin{equation*}
E_{x}=\{\omega \in T_{x}^{\ast }M\mid \ker \omega \supseteq H_{x}\}.
\end{equation*}%
Since the complex structure $J$ induces an orientation on $H$, it follows
that the real line bundle $E$ ($\simeq TM/H$) is orientable, and thus
trivial. Consequently there exist globally defined nowhere vanishing
sections $\theta \in \Gamma (E)$. ~Any such a section $\theta $ is referred
to as a \emph{pseudo-Hermitian structure} on $M$. The Levi form $L_{\theta }$
of a given pseudo-Hermitian structure $\theta $ is defined by%
\begin{equation}
L_{\theta }(X,Y)=d\theta (X,JY)  \label{equation2.2}
\end{equation}%
for any $X,Y\in H$. The integrability condition (ii) in Definition \ref{definition2.1}
implies that $L_{\theta }$ is $J$-invariant, and thus symmetric. Recall that
if $L_{\theta }$ is positive definite on $H$ for some $\theta $, then $%
(M,H,J)$ is said to be strictly pseudoconvex. In this case, $\theta $ is
called a positive pseudo-Hermitian structure. Henceforth we will assume that 
$(M,H,J)$ is a strictly pseudoconvex \emph{CR} manifold endowed with a
positive pseudo-Hermitian structure $\theta $. The quadruple $(M,H,J,\theta
) $ is referred to as a \emph{pseudo-Hermitian manifold}.

Since $L_{\theta }$ is positive definite, we have a sub-Riemannian structure 
$(H,L_{\theta })$ on $M$, and all sections of $H$ together with their Lie
brackets span $T_{x}M$ at each point $x$. Indeed $(M,H,L_{\theta })$ is a
step-$2$ sub-Riemannian manifold. We say that a Lipschitz curve $\gamma
:[0,l]\rightarrow M$ is horizontal if $\gamma ^{\prime }(t)\in H_{\gamma
(t)} $ a.e. in $[0,l]$. For any two points $p$, $q\in M$, by the theorem of
Chow-Rashevsky (\cite{[Cho]}, \cite{[Ra]}), there always exist such horizontal curves
joining $p$ and $q$. Consequently, we may define the so-called Carnot--Carath%
\'{e}odory distance: 
\begin{equation}
r_{CC}(p,q)=\inf \left\{\int_{0}^{l}\sqrt{L_{\theta }(\gamma ^{\prime
},\gamma ^{\prime })}dt \  \bigg| 
\begin{array}{c}
\gamma :[0,l]\rightarrow M\text{ is a horizontal
curve,} \\ 
\gamma (0)=p\text{, }\gamma (l)=q 
\end{array} \right\} \label{equation2.3}
\end{equation}
which induces to a metric space structure on $(M,H,L_{\theta })$.

For a pseudo-Hermitian manifold $(M,H,J,\theta )$, there is a unique
globally defined nowhere zero vector field $\xi $ on $M$ such that 
\begin{equation}
\theta (\xi )=1\text{, \ }d\theta (\xi ,\cdot )=0.  \label{equation2.4}
\end{equation}%
This vector field is called the \emph{Reeb vector field}, whose integral
curves forms an oriented one-dimensional foliation $\mathcal{F}_{\xi }$ on $ M $. Clearly $TM$ admits the following decomposition%
\begin{equation}
TM=H\oplus L  \label{equation2.5}
\end{equation}%
where $L$ is the trivial line bundle generated by $\xi $. One may extend the
complex structure $J$ to an endomorphism of $TM$ by requiring%
\begin{equation}
J\xi =0.  \label{equation2.6}
\end{equation}%
Let $\pi _{H}:TM\rightarrow H$ denote the natural projection morphism and
set 
\begin{equation}
G_{\theta }(X,Y)=L_{\theta }(\pi _{H}(X),\pi _{H}(Y))   \label{equation2.7}
\end{equation}%
for any $X,Y\in TM$. Then the Levi form $L_{\theta }$ can be extended to a
Riemannian metric on $M$ by 
\begin{equation}
g_{\theta }=G_{\theta }+\theta \otimes \theta , \label{equation2.8}
\end{equation}%
which is called the Webster metric. Thus we have a Riemannian distance
function $r$ of $g_{\theta }$, which is a useful auxiliary function for
studying geometric analysis on $M$. We find from \eqref{equation2.8} that \eqref{equation2.5} is
actually an orthogonal decomposition of $TM$ with respect to $g_{\theta }$.
The volume form of $g_{\theta }$ is given, up to a constant, by%
\begin{equation}
\Omega _{M}=\theta \wedge (d\theta )^{m}.  \label{equation2.9}
\end{equation}%
One may also introduce the "horizontal volume form" 
\begin{equation}
\Omega _{M}^{H}=\left( d\theta \right) ^{m}.  \label{equation2.10}
\end{equation}

On a pseudo-Hermtian manifold, there is a canonical linear connection
preserving both the \emph{CR} structure and the Webster metric.

\begin{theorem}[{\cite{[DT]}}] \label{theorem2.1}
  Let $(M,H,J,\theta )$ be a pseudo-Hermitian
  manifold with the Reeb vector field $\xi $ and the Webster metric $g_{\theta
  }$. Then there exists a unique linear connection $\nabla $ such that
  \begin{enumerate}[(i)]
    \item $\ \nabla _{X}\Gamma (H)\subset \Gamma (H)$ for any $X\in \Gamma (TM)$;
    \item $\ \nabla J=0$, $\nabla g_{\theta }=0$;
    \item The torsion $T_{\nabla }$ of $\nabla $ satisfies 
    \begin{equation*}
    T_{\nabla }(X,Y)=2d\theta (X,Y)\xi \text{ \ and \ }T_{\nabla }(\xi
    ,JX)+JT_{\nabla }(\xi ,X)=0
    \end{equation*}
    for any $X,Y\in H$.
  \end{enumerate}
\end{theorem}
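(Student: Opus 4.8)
The final statement is Theorem~\ref{theorem2.1}, the existence and uniqueness of the Tanaka–Webster connection. Here is how I would approach it.

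\medskip

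The plan is to establish uniqueness first by deriving an explicit formula for $\nabla$ from the axioms, and then to turn that formula into a definition and verify it satisfies (i)--(iii). For uniqueness, I would work with an adapted frame. Since $g_\theta$ restricts to $L_\theta$ on $H$ and makes $\xi$ a unit normal, and since $\nabla g_\theta = 0$, the connection restricted to $H$ is a metric connection for $L_\theta$; the torsion conditions in (iii) then should pin it down. Concretely, I would polarize the equation $\nabla g_\theta = 0$ in the usual Koszul manner: for $X,Y,Z \in \Gamma(TM)$, write the three cyclic permutations of $X\, g_\theta(Y,Z) = g_\theta(\nabla_X Y, Z) + g_\theta(Y, \nabla_X Z)$, take the alternating sum, and use $\nabla_X Y - \nabla_Y X = [X,Y] + T_\nabla(X,Y)$ together with the prescribed torsion $T_\nabla(X,Y) = 2\,d\theta(X,Y)\xi$ on $H$ and the symmetry condition $T_\nabla(\xi, JX) + J T_\nabla(\xi, X) = 0$ for the $\xi$-directions. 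This yields a Koszul-type formula expressing $2\,g_\theta(\nabla_X Y, Z)$ in terms of the metric, its derivatives, Lie brackets, $d\theta$, and the torsion's $\xi$-component — and since $g_\theta$ is nondegenerate, $\nabla$ is uniquely determined once the $\xi$-torsion (the pseudo-Hermitian torsion $\tau = T_\nabla(\xi, \cdot)$) is itself determined by its defining symmetry relation. I would check separately that condition (i), $\nabla_X \Gamma(H) \subset \Gamma(H)$, is compatible and in fact forced: differentiating $\theta(W) = 0$ for $W \in \Gamma(H)$ and using $\nabla g_\theta = 0$ with $|\xi|_{g_\theta} = 1$ shows $g_\theta(\nabla_X W, \xi) = 0$.

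\medskip

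For existence, I would simply \emph{define} $\nabla$ by the Koszul-type formula obtained above (or, equivalently and more cleanly, define it on an adapted local coframe $\{\theta, \theta^\alpha\}$ by the structure equations $d\theta^\alpha = \theta^\beta \wedge \omega_\beta{}^\alpha + \theta \wedge \tau^\alpha$ with $\omega_{\alpha\bar\beta} + \omega_{\bar\beta\alpha} = dh_{\alpha\bar\beta}$ and $\tau_\alpha \wedge \theta^\alpha = 0$, which is the classical Webster presentation), and then verify (i), (ii), (iii) directly. Property (i) and $\nabla g_\theta = 0$ are immediate from the construction since the connection $1$-forms take values in $\mathfrak{u}(m)$ relative to the Levi form; $\nabla J = 0$ follows because the connection forms are complex-linear (the $T_{1,0}$ and $T_{0,1}$ pieces do not mix); and the torsion identities in (iii) are exactly the content of the structure equations together with the normalization $\tau_\alpha \wedge \theta^\alpha = 0$, which encodes the symmetry $T_\nabla(\xi, JX) + J T_\nabla(\xi, X) = 0$. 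One also has to check this local definition is independent of the choice of adapted coframe, which follows from the uniqueness argument (any two such connections agree) or from a direct transformation-law computation.

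\medskip

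I expect the main obstacle to be the bookkeeping around the $\xi$-direction: the Koszul polarization only directly controls $\nabla$ on $H$ via the metric $L_\theta$, and one must carefully argue that the second torsion condition $T_\nabla(\xi, JX) + J T_\nabla(\xi, X) = 0$, together with $\nabla J = 0$ and $\nabla \xi = 0$ (which itself needs to be derived from $d\theta(\xi,\cdot)=0$ and the torsion normalization), suffices to determine the remaining components $\nabla_\xi X$ and $\nabla_X \xi$. The cleanest route is probably to bypass much of this by adopting the local coframe formulation from the outset, reducing everything to the elementary linear-algebra fact that the system $\omega_{\alpha\bar\beta} + \omega_{\bar\beta\alpha} = dh_{\alpha\bar\beta}$, $\tau_\alpha \wedge \theta^\alpha = 0$ has a unique solution $(\omega_\alpha{}^\beta, \tau_\alpha)$ for given $d\theta^\alpha$ — but since this is a recollection of a known result (\cite{[DT]}), I would keep the verification terse and refer to that source for the full details.
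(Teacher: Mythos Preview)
The paper does not actually prove Theorem~\ref{theorem2.1}; it is stated as a known result with a citation to \cite{[DT]} and no argument is given in the text. Your sketch is the standard and correct approach (Koszul-type determination from the metric and torsion axioms, or equivalently the Webster structure-equation formulation), and it would serve as a proof, but there is no in-paper proof to compare it against.
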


The connection $\nabla $ in Theorem \ref{theorem2.1} is usuallly called the \emph{%
Tanaka-Webster connection.} The \emph{pseudo-Hermitian torsion} of $\nabla $%
, denoted by $\tau $, is a $TM$-valued 1-form defined by $\tau (X)=T_{\nabla
}(\xi ,X)$ for any $X\in TM$. It induces a trace-free symmetric tensor field 
$A$ given by%
\begin{equation}
A(X,Y)=g_{\theta }(\tau (X),Y). \label{equation2.11}
\end{equation}%
A pseudo-Hermitian manifold is said to be \emph{Sasakian} if $\tau =0$ (or
equivalently, $A=0$). Sasakian manifolds can be viewed as an odd-dimensional
analogue of K\"{a}hler manifolds (cf. \cite{[BG]}).

Suppose $(M^{2m+1},H,J,\theta )$ is a $2m+1$ dimensional pseudo-Hermitian
manifold with the Webster metric $g_{\theta }$ and Tananka-Webster
connection $\nabla $. Let $\eta _{1},\eta _{2},...,\eta _{m}$ be a unitary
frame field of $T_{1,0}M$ on an open domain of $M$, and let $\theta
^{1},\theta ^{2},...,\theta ^{m}$ be its coframe field. Then the "horizontal
component of $g_{\theta }$" may be expressed as%
\begin{equation}
G_{\theta }=\sum_{i=1}^{m}\theta ^{i}\theta ^{\overline{i}}. \label{equation2.12}
\end{equation}%
Note that (iii) of Theorem \ref{theorem2.1} implies that $\tau (T_{1,0}M)\subseteq
T_{0,1}M$. In terms of the frame fields, one may write%
\begin{align}
\tau =\sum_{i}\left( \tau ^{i}\eta _{i}+\tau ^{\overline{i}}\eta _{%
\overline{i}}\right) 
=\sum_{i, j} \left( A_{\overline{j}}^{i}\theta ^{\overline{j}}\eta _{i}+A_{j}^{%
\overline{i}}\theta ^{j}\eta _{\overline{i}}\right)  \label{equation2.13} 
\end{align}%
From \cite{[We]}, we have the following structure equations for the Tanaka-Webster
connection $\nabla $: 
\begin{equation}
\left\{ 
\begin{aligned}
d\theta \text{ } &= 2\sqrt{-1}\sum_{j}\theta ^{j}\wedge \theta ^{\overline{j}%
} \\ 
d\theta ^{i} & = -\sum_{j}\theta _{j}^{i}\wedge \theta ^{j}+\theta
\wedge \tau ^{i} \\ 
d\theta _{j}^{i} &= -\sum_{k}\theta _{k}^{i}\wedge \theta _{j}^{k}+\Pi
_{j}^{i}%
\end{aligned}%
\right. \label{equation2.14}
\end{equation}%
with%
\begin{equation}
\theta _{j}^{i}+\theta _{\overline{i}}^{\overline{j}}=0 \label{equation2.15}
\end{equation}%
and%
\begin{align}
\Pi _{j}^{i} =& \: 2\sqrt{-1}\theta ^{i}\wedge \tau ^{\overline{j}}-2\sqrt{-1}%
\tau ^{i}\wedge \theta ^{\overline{j}}+\sum_{k,l}R_{jk\overline{l}%
}^{i}\theta ^{k}\wedge \theta ^{\overline{l}} \label{equation2.16} \\
&+\sum_{k}\left( W_{jk}^{i}\theta ^{k}\wedge \theta -W_{j\overline{k}%
}^{i}\theta ^{\overline{k}}\wedge \theta \right)  \notag
\end{align}%
where $W_{jk}^{i}=A_{j,\overline{i}}^{\overline{k}}$, $W_{j\overline{k}%
}^{i}=A_{\overline{k},j}^{i}$, and $\{R_{jk\overline{l}}^{i}\}$ are
components of the curvature tensor of $\nabla $. Set $R_{i\overline{j}k%
\overline{l}}=R_{ik\overline{l}}^{j}$. Then we know that%
\begin{align}\label{equation2.17}
R_{i\overline{j}k\overline{l}} = -R_{\overline{j}ik\overline{l}}=-R_{i%
\overline{j}\overline{l}k} , \quad 
R_{i\overline{j}k\overline{l}} = R_{k\overline{j}i\overline{l}}=R_{k%
\overline{l}i\overline{j}}. 
\end{align}

Suppose $X=X^{i}\eta _{i}$ and $Y=Y^{j}\eta_{j}$ are two vectors in $%
T_{1,0}M$, then the \emph{pseudo-Hermitian bisectional curvature} determined
by $X$ and $Y$ is defined by%
\begin{equation}
\frac{\sum_{i,j,k,l}R_{i\overline{j}k\overline{l}}X^{i}X^{\overline{j}%
}Y^{k}Y^{\overline{l}}}{(\sum_{i}X^{i}X^{\overline{i}})(\sum_{j}Y^{j}Y^{%
\overline{j}})}.  \label{equation2.18}
\end{equation}%
If $X=Y$, the above quantity is called the \emph{pseudo-Hermitian sectional
curvature} in the direction $X$ (cf. \cite{[We]}). The \emph{pseudo-Hermitian Ricci
curvature} is defined as%
\begin{equation}
R_{i\overline{j}}=\sum_{k}R_{k\overline{k}i\overline{j}}  \label{equation2.19}
\end{equation}%
and thus the \emph{pseudo-Hermitian scalar curvature} is given by%
\begin{equation}
R=\sum_{i}R_{i\overline{i}}  \label{equation2.20}
\end{equation}

The simpliest \emph{CR} manifolds are Sasakian space forms, which are simply
connected Sasakian manifolds with constant pseudo-hermitian curvature. For
our purpose, let us recall the Sasakian space form with negative constant
pseudohermitian curvature as follows.

\begin{example}[(cf. {\cite{[BG]}} page 229)] \label{example2.1}
Let $B_{C}^{m}(1)$ be the complex
ball in $C^{m}$ with the Bergman metric $g_{0}$ of constant holomorphic
sectional curvature $-1$. One can scale $g_{0}$ to have a metric $g_{B}$
with contant holomorphic sectional sectional curvature $-k$ ($k>0$). Let $\omega_{B}$ be the K\"{a}hler form of $g_{B}$. Clearly there is a $1$-form $%
\alpha _{B}$ on $B_{C}^{m}(1)$ such that $\omega _{B}=d\alpha _{B}$, since $%
B_{C}^{m}(1)$ is simply connected. Set $\theta =dt+\pi ^{\ast }\alpha _{B}$
and $\xi =\frac{\partial }{\partial t}$, where $\pi :B_{C}^{m}(1)\times
R\rightarrow B_{C}^{m}(1)$ is the natural projection and $t$ is the
coordinate on $R$. Set $H=\ker \theta $. We define an almost complex
structure $J$ on $H$ to be the horizontal lift of the almost complex
structure $J_{B}$ on $B_{C}^{m}(1)$. Then $(B_{C}^{m}(1)\times R,H,J,\theta
) $ is a Sasakian space form with constant pseudohermitian curvature $-k$,
which will also be denoted by $D^{2m+1}(-k)$.
\end{example}

Analogous to the Laplace operator in Riemiannian geometry, there is a
degenerate elliptic operator in pseudo-Hermitian geometry, which is called
sub-Laplace operator. For a smooth function $u:(M,H,J,\theta )\rightarrow 
\mathbb{R}
$, let $\nabla du$ be the convariant derivative of the differential $du\in
\Gamma (T^{\ast }M)$ with respect to the Tanaka-Webster connection. Then the 
\emph{sub-Laplacian} of $u$ is defined by%
\begin{equation}
\bigtriangleup _{b}u=tr_{H}\left( \nabla du\right) =\sum_{i}\left( u_{i%
\overline{i}}+u_{\overline{i}i}\right)  \label{equation2.21}
\end{equation}%
where $u_{i\overline{i}}=\left( \nabla du\right) (\eta _{i},\eta _{\overline{%
i}})$.

Suppose $(N^{2n+1},\widetilde{H},\widetilde{J},\widetilde{\theta })$ is
another pseudo-Hermitian manifold with the Webster metric $g_{\widetilde{%
\theta }}$ and Tanaka-Webster connection $\widetilde{\nabla }$. Let $\{%
\widetilde{\xi },\widetilde{\eta }_{\alpha },\widetilde{\eta }_{\overline{%
\alpha }}\}$ be a local frame field on the $N$ with $\widetilde{\xi }$ the
Reeb vector field determined by $\widetilde{\theta }$ and $\{\widetilde{\eta 
}_{\alpha }\}$ the unitary frame field of $T_{1,0}N$. Let $\{\widetilde{%
\theta },\widetilde{\theta }^{\alpha },\widetilde{\theta }^{\overline{\alpha 
}}\}$ be its dual frame field. We will denote the corresponding geometric
data, such as, the connection $1$-forms, torsion and curvature, etc., on $N$
by the same notations as in $M$, but with \symbol{126}on them. Then similar
structure equations for $\widetilde{\nabla }$ are valid in $N$ too.

\section{Bochner formulas for \texorpdfstring{$(H,\protect\widetilde{H})$}{}-holomorphic maps} \label{section3}

In this section, we will derive Bochner formulas for some generalized
holomorphic maps between two pseudo-Hermitian manifolds.

\begin{definition}[{\cite{[Do]}}] \label{definition3.1}
We say that a map $f:(M,H,J,\theta
)\rightarrow (N,\widetilde{H},\widetilde{J},\widetilde{\theta })$ between
two pseduo-Hermitian manifolds is $(H,\widetilde{H})$-holomorphic if it
satisfies%
\begin{equation}
df_{H,\widetilde{H}}\circ J=\widetilde{J}\circ df_{H,\widetilde{H}} \label{equation3.1}
\end{equation}%
where $df_{H,\widetilde{H}}=\pi _{\widetilde{H}}\circ df\circ i_{H}$, $\pi _{%
\widetilde{H}}:TN\rightarrow \widetilde{H}$ is the natural projection
morphism and $i_{H}:H\rightarrow TM$ is the inclusion morphism.
\end{definition}

Let $f:(M^{2m+1},H,J,\theta )\rightarrow (N,\widetilde{H},\widetilde{J},%
\widetilde{\theta })$ be a $(H,\widetilde{H})$-holomorphic map between
pseudo-Hermitian manifolds. Then its differential can be expressed as%
\begin{equation}
df=\sum_{A, B}f_{B}^{\widetilde{A}}\theta ^{B}\otimes \widetilde{\eta }_{%
\widetilde{A}}  \label{equation3.2}
\end{equation}%
where $\theta ^{0}=\theta $, $\widetilde{\eta }_{0}=\widetilde{\xi }$. We
use the following convention on the ranges of indices in this paper: 
\begin{eqnarray*}
A,B,C &=&0,1,...,m,\overline{1},...,\overline{m};\text{ \ }i,j,k=1,...,m,%
\overline{i},\overline{j},\overline{k}=\overline{1},...,\overline{m}; \\
\widetilde{A},\widetilde{B},\widetilde{C} &=&0,1,...,n,\overline{1},...,%
\overline{n};\text{ \ }\alpha ,\beta ,\gamma =1,...,n,\overline{\alpha },%
\overline{\beta },\overline{\gamma }=\overline{1},...,\overline{n}.
\end{eqnarray*}%
Clearly the condition \eqref{equation3.1} in Definition \ref{definition3.1} is equivalent to%
\begin{equation}
f_{\overline{i}}^{\alpha }=f_{i}^{\overline{\alpha }}=0.  \label{equation3.3}
\end{equation}%
From \eqref{equation3.2} and \eqref{equation3.3}, we have%
\begin{align}
f^{\ast }\widetilde{\theta } & =f_{0}^{0}\theta +\sum_{j}(f_{j}^{0}\theta
^{j}+f_{\overline{j}}^{0}\theta ^{\overline{j}}) \label{equation3.4} \\  
f^{\ast }\widetilde{\theta }^{\alpha } & =f_{0}^{\alpha }\theta
+\sum_{j}f_{j}^{\alpha }\theta ^{j}.  \label{equation3.5}
\end{align}

By taking the exterior derivative of \eqref{equation3.4} and using the structure equations
in $M$ and $N$, we obtain%
\begin{align}
0 = & Df_{0}^{0}\wedge \theta +\sum_{j}(Df_{j}^{0}\wedge \theta ^{j}+Df_{\overline{%
j}}^{0}\wedge \theta ^{\overline{j}})+\sum_{i,j}(f_{i}^{0}A_{\overline{j}%
}^{i}\theta \wedge \theta ^{\overline{j}}+f_{\overline{i}}^{0}A_{j}^{%
\overline{i}}\theta \wedge \theta ^{j}) \nonumber \\ 
& +2\sqrt{-1}f_{0}^{0}\sum_{j}\theta ^{j}\wedge \theta ^{\overline{j}}-2\sqrt{
-1}\sum_{\alpha }f^{\ast }\widetilde{\theta }^{\alpha }\wedge f^{\ast }%
\widetilde{\theta }^{\overline{\alpha }}  \label{equation3.6}
\end{align}%
where 
\begin{align*}
Df_{0}^{0} &= df_{0}^{0}=f_{00}^{0}\theta +\sum_{k}(f_{0k}^{0}\theta
^{k}+f_{0\overline{k}}^{0}\theta ^{\overline{k}}) \\
Df_{j}^{0} &= df_{j}^{0}-\sum_{k}f_{k}^{0}\theta _{j}^{k}=f_{j0}^{0}\theta
+\sum_{k}(f_{jk}^{0}\theta ^{k}+f_{j\overline{k}}^{0}\theta ^{\overline{k}})
\\
Df_{\overline{j}}^{0} &= df_{\overline{j}}^{0}-\sum_{k}f_{\overline{k}%
}^{0}\theta _{\overline{j}}^{\overline{k}}=f_{\overline{j}0}^{0}\theta
+\sum_{k}(f_{\overline{j}k}^{0}\theta ^{k}+f_{\overline{j}\overline{k}%
}^{0}\theta ^{\overline{k}}).
\end{align*}%
Then \eqref{equation3.6} gives%
\begin{align}
\begin{aligned}
& f_{jl}^{0} = f_{lj}^{0} \\ 
& f_{0l}^{0}-f_{l0}^{0}-\sum_{i}f_{\overline{i}}^{0}A_{j}^{\overline{i}} =2
\sqrt{-1}\sum_{\alpha }f_{l}^{\alpha }f_{0}^{\overline{\alpha }} \\ 
& f_{j\overline{l}}^{0}-f_{\overline{l}j}^{0}-2\sqrt{-1}f_{0}^{0}\delta
_{j}^{l} =-2\sqrt{-1}\sum_{\alpha }f_{j}^{\alpha }f_{\overline{l}}^{\overline{ \alpha }} 
\end{aligned}
\label{equation3.7}
\end{align}

By similar computations, we get from \eqref{equation3.5} that%
\begin{equation}
Df_{0}^{\alpha }\wedge \theta +\sum_{j}Df_{j}^{\alpha }\wedge \theta ^{j}+2%
\sqrt{-1}\sum_{j}f_{0}^{\alpha }\theta ^{j}\wedge \theta ^{\overline{j}%
}+\sum_{j,k}f_{j}^{\alpha }A_{\overline{k}}^{j}\theta \wedge \theta ^{%
\overline{k}}=\sum_{\beta }\widetilde{A}_{\overline{\beta }}^{\alpha
}f^{\ast }\widetilde{\theta }\wedge f^{\ast }\widetilde{\theta }^{\overline{%
\beta }}  \label{equation3.8}
\end{equation}%
where%
\begin{align}
Df_{0}^{\alpha } & = df_{0}^{\alpha }+\sum_{\beta }f_{0}^{\beta }\widetilde{%
\theta }_{\beta }^{\alpha }=f_{00}^{\alpha }\theta +\sum_{l}(f_{0l}^{\alpha
}\theta ^{l}+f_{0\overline{l}}^{\alpha }\theta ^{\overline{l}})  \label{equation3.9} \\
Df_{j}^{\alpha } & = df_{j}^{\alpha }-\sum_{k}f_{k}^{\alpha }\theta
_{j}^{k}+\sum_{\beta }f_{j}^{\beta }\widetilde{\theta }_{\beta }^{\alpha
}=f_{j0}^{\alpha }\theta +\sum_{l}(f_{jl}^{\alpha }\theta ^{l}+f_{j\overline{%
l}}^{\alpha }\theta ^{\overline{l}}).  \label{equation3.10}
\end{align}%
Using \eqref{equation3.9} and \eqref{equation3.10}, we deduce from \eqref{equation3.8} that%
\begin{align}
\begin{aligned}
& f_{kl}^{\alpha } = f_{lk}^{\alpha } \\ 
& f_{0j}^{\alpha }-f_{j0}^{\alpha } = \sum_{\beta }\widetilde{A}_{\overline{%
\beta }}^{\alpha }f_{j}^{0}f_{0}^{\overline{\beta }} \\ 
& f_{0\overline{j}}^{\alpha }-\sum_{k}f_{k}^{\alpha }A_{\overline{j}
}^{k} = \sum_{\beta }\widetilde{A}_{\bar{\beta} }^{\alpha }\left( f_{\overline{j}
}^{0}f_{0}^{\bar{\beta} }-f_{\overline{j}}^{\overline{\beta }}f_{0}^{0}\right) \\ 
& f_{j\overline{l}}^{\alpha }-2\sqrt{-1}f_{0}^{\alpha }\delta
_{j}^{l} = -\sum_{\beta }\widetilde{A}_{\overline{\beta }}^{\alpha
}f_{j}^{0}f_{\overline{l}}^{\overline{\beta }} \\ 
& \sum_{\beta }\widetilde{A}_{\overline{\beta }}^{\alpha }(f_{\overline{l}%
}^{0}f_{\overline{j}}^{\overline{\beta }}-f_{\overline{j}}^{0}f_{\overline{l}%
}^{\overline{\beta }}) = 0.%
\end{aligned}%
\label{equation3.11}
\end{align}

Applying the exterior derivative to \eqref{equation3.10} and using the structure equations
again, we obtain%
\begin{equation}
\begin{aligned}
Df_{j0}^{\alpha }\wedge \theta +\sum_{l}(Df_{jl}^{\alpha }\wedge \theta
^{l}+Df_{j\overline{l}}^{\alpha }\wedge \theta ^{\overline{l}}+2\sqrt{-1}%
f_{j0}^{\alpha }\theta ^{l}\wedge \theta ^{\overline{l}})& \\ 
+\sum_{k,l}(f_{jk}^{\alpha }A_{\overline{l}}^{k}\theta \wedge \theta ^{%
\overline{l}}+f_{j\overline{k}}^{\alpha }A_{l}^{\overline{k}}\theta \wedge
\theta ^{l}) & = -f_{i}^{\alpha }\Pi _{j}^{i}+f_{j}^{\beta }\widetilde{\Pi }%
_{\beta }^{\alpha }%
\end{aligned}%
\label{equation3.12}
\end{equation}%
where%
\begin{equation}
\begin{aligned}
Df_{j0}^{\alpha } & = df_{j0}^{\alpha }-\sum_{k}f_{k0}^{\alpha }\theta
_{j}^{k}+\sum_{\beta }f_{j0}^{\beta }\widetilde{\theta }_{\beta }^{\alpha
}=f_{j00}^{\alpha }\theta +\sum_{l}(f_{j0l}^{\alpha }\theta ^{l}+f_{j0%
\overline{l}}^{\alpha }\theta ^{\overline{l}}) \\ 
Df_{jl}^{\alpha } & = df_{jl}^{\alpha }-\sum_{k}f_{kl}^{\alpha }\theta
_{j}^{k}-\sum_{k}f_{jk}^{\alpha }\theta _{l}^{k}+\sum_{\beta }f_{jl}^{\beta }%
\widetilde{\theta }_{\beta }^{\alpha }=f_{jl0}^{\alpha }\theta
+\sum_{k}(f_{jlk}^{\alpha }\theta ^{k}+f_{jl\overline{k}}^{\alpha }\theta ^{%
\overline{k}}) \\ 
Df_{j\overline{l}}^{\alpha } & = df_{j\overline{l}}^{\alpha }-\sum_{k}f_{k%
\overline{l}}^{\alpha }\theta _{j}^{k}-\sum_{k}f_{j\overline{k}}^{\alpha
}\theta _{\overline{l}}^{\overline{k}}+\sum_{\beta }f_{j\overline{l}}^{\beta
}\widetilde{\theta }_{\beta }^{\alpha }=f_{j\overline{l}0}^{\alpha }\theta
+\sum_{k}(f_{j\overline{l}k}^{\alpha }\theta ^{k}+f_{j\overline{l}\overline{k%
}}^{\alpha }\theta ^{\overline{k}}).%
\end{aligned}%
\label{equation3.13}
\end{equation}%
From \eqref{equation3.12}, we collect terms corresponding $\theta ^{k}\wedge \theta ^{%
\overline{l}}$ to get%
\begin{align}
f_{j\overline{l}k}^{\alpha }-f_{jk\overline{l}}^{\alpha } &= -2\sqrt{-1}%
f_{j0}^{\alpha }\delta _{k}^{l}-\sum_{i}f_{i}^{\alpha }R_{jk\overline{l}%
}^{i}+\sum_{\beta ,\gamma ,\delta }f_{j}^{\beta }f_{k}^{\gamma }f_{\overline{%
l}}^{\overline{\delta }}\widetilde{R}_{\beta \gamma \overline{\delta }%
}^{\alpha }  \label{equation3.14} \\
& \quad +\sum_{\beta ,\gamma }(f_{j}^{\beta }f_{k}^{\gamma }f_{\overline{l}}^{0}%
\widetilde{W}_{\beta \gamma }^{\alpha }+f_{j}^{\beta }f_{\overline{l}}^{%
\overline{\gamma }}f_{k}^{0}\widetilde{W}_{\beta \overline{\gamma }}^{\alpha
}).  \notag
\end{align}

The horizontal energy density of the $(H,\widetilde{H})$-holomorphic map $f$
is given by 
\begin{equation}
e_{H,\widetilde{H}}=\frac{1}{2}\mid df_{H,\widetilde{H}}\mid
^{2}=\sum_{\alpha ,j}f_{j}^{\alpha }f_{\overline{j}}^{\overline{\alpha }}. 
\label{equation3.15}
\end{equation}%
We first derive the Bochner formula for $e_{H,\widetilde{H}}$ as follows.
From \eqref{equation3.15}, we have%
\begin{equation*}
(e_{H,\widetilde{H}})_{k}=\sum_{\alpha ,j}\left( f_{jk}^{\alpha }f_{%
\overline{j}}^{\overline{\alpha }}+f_{j}^{\alpha }f_{\overline{j}k}^{%
\overline{\alpha }}\right) ,
\end{equation*}%
and thus%
\begin{equation}
\left( e_{H,\widetilde{H}}\right) _{k\overline{k}}=\sum_{\alpha ,j}\left(
\mid f_{jk}^{\alpha }\mid ^{2}+\mid f_{j\overline{k}}^{\alpha }\mid ^{2}+f_{%
\overline{j}}^{\overline{\alpha }}f_{jk\overline{k}}^{\alpha }+f_{j}^{\alpha
}f_{\overline{j}k\overline{k}}^{\overline{\alpha }}\right) .  \label{equation3.16}
\end{equation}%
In terms of \eqref{equation2.21} and \eqref{equation3.16}, we get%
\begin{equation}
\bigtriangleup _{b}e_{H,\widetilde{H}}=2\sum_{\alpha ,j,k}\left( \mid
f_{jk}^{\alpha }\mid ^{2}+\mid f_{j\overline{k}}^{\alpha }\mid ^{2}\right)
+\sum_{\alpha ,j,k}\left( f_{\overline{j}}^{\overline{\alpha }}f_{jk%
\overline{k}}^{\alpha }+f_{j}^{\alpha }f_{\overline{j}k\overline{k}}^{%
\overline{\alpha }}+f_{j}^{\alpha }f_{\overline{j}\overline{k}k}^{\overline{%
\alpha }}+f_{\overline{j}}^{\overline{\alpha }}f_{j\overline{k}k}^{\alpha
}\right) .  \label{equation3.17}
\end{equation}%
Using \eqref{equation3.11} and \eqref{equation3.14}, we perform the following computations%
\begin{equation} 
\begin{aligned}
f_{jk\overline{k}}^{\alpha }=f_{kj\overline{k}}^{\alpha } & = f_{k\overline{k}%
j}^{\alpha }+2\sqrt{-1}f_{k0}^{\alpha }\delta _{j}^{k}+f_{i}^{\alpha
}R_{kjk}^{i}-f_{k}^{\beta }f_{j}^{\gamma }f_{\overline{k}}^{\overline{\delta 
}}\widetilde{R}_{\beta \gamma \overline{\delta }}^{\alpha } \\ 
& \quad -f_{k}^{\beta }f_{j}^{\gamma }f_{\overline{k}}^{0}\widetilde{W}_{\beta
\gamma }^{\alpha }-f_{k}^{\beta }f_{\overline{k}}^{\overline{\gamma }%
}f_{j}^{0}\widetilde{W}_{\beta \overline{\gamma }}^{\alpha }%
\end{aligned}%
\label{equation3.18}
\end{equation}%
and%
\begin{equation}
f_{j\overline{k}k}^{\alpha }=\left( 2\sqrt{-1}f_{0}^{\alpha }\delta
_{j}^{k}-\sum_{\beta }\widetilde{A}_{\overline{\beta }}^{\alpha }f_{j}^{0}f_{%
\overline{k}}^{\overline{\beta }}\right) _{k}=2\sqrt{-1}f_{0k}^{\alpha
}\delta _{j}^{k}-\left( \sum_{\beta }\widetilde{A}_{\overline{\beta }%
}^{\alpha }f_{j}^{0}f_{\overline{k}}^{\overline{\beta }}\right) _{k} 
\label{equation3.19}
\end{equation}%
Clearly the conjugates of \eqref{equation3.18} and \eqref{equation3.19} yields the expressions of $f_{%
\overline{j}\overline{k}k}^{\overline{\alpha }}$ and $f_{\overline{j}k%
\overline{k}}^{\overline{\alpha }}$. From \eqref{equation3.11}, \eqref{equation3.18} and \eqref{equation3.19}, it follows that 
\begin{equation}
\begin{aligned}
\bigtriangleup _{b}e_{H,\widetilde{H}} & = 2\sum_{\alpha ,j,k}\left( \mid
f_{jk}^{\alpha }\mid ^{2}+\mid f_{j\overline{k}}^{\alpha }\mid ^{2}\right)
+2(m+2)\sqrt{-1}\sum_{j,\alpha }\left( f_{\overline{j}}^{\overline{\alpha }%
}f_{0j}^{\alpha }-f_{j}^{\alpha }f_{0\overline{j}}^{\overline{\alpha }%
}\right) +2\sum_{i,j,k,\alpha }f_{i}^{\alpha }f_{\overline{j}}^{\overline{\alpha }}R_{k\overline{i}j\overline{k}} \\ 
& \quad -2\sum_{j,k,\alpha ,\beta ,\gamma ,\delta}f_{k}^{\alpha }f_{\overline{j}}^{\overline{\beta }}f_{j}^{\gamma }f_{\overline{k}}^{\overline{\delta }}\widetilde{R}_{\alpha \overline{\beta }%
\gamma \overline{\delta }} +2\sqrt{-1}\sum_{j,\alpha ,\beta }\left( \widetilde{A}_{\beta }^{\overline{%
\alpha }}f_{\overline{j}}^{0}f_{0}^{\beta }f_{j}^{\alpha }-\widetilde{A}_{%
\overline{\beta }}^{\alpha }f_{j}^{0}f_{0}^{\overline{\beta }}f_{\overline{j}%
}^{\overline{\alpha }}\right) \\ 
& \quad -\sum_{j,k,\alpha ,\beta ,\gamma }\left( f_{\overline{j}}^{\overline{\alpha }%
}f_{k}^{\beta }f_{j}^{\gamma }f_{\overline{k}}^{0}\widetilde{W}_{\beta
\gamma }^{\alpha }+f_{\overline{j}}^{\overline{\alpha }}f_{k}^{\beta }f_{%
\overline{k}}^{\overline{\gamma }}f_{j}^{0}\widetilde{W}_{\beta \overline{%
\gamma }}^{\alpha }+f_{j}^{\alpha }f_{\overline{k}}^{\overline{\beta }}f_{%
\overline{j}}^{\overline{\gamma }}f_{k}^{0}\widetilde{W}_{\overline{\beta }%
\overline{\gamma }}^{\overline{\alpha }}+f_{j}^{\alpha }f_{\overline{k}}^{%
\overline{\beta }}f_{k}^{\gamma }f_{\overline{j}}^{0}\widetilde{W}_{%
\overline{\beta }\gamma }^{\overline{\alpha }}\right) \\ 
& \quad -\sum_{j,k,\alpha ,\beta } \left[ (\widetilde{A}_{\overline{\beta }%
}^{\alpha }f_{\overline{k}}^{\overline{\beta }}f_{j}^{0})_{k}f_{\overline{j}%
}^{\overline{\alpha }}+(\widetilde{A}_{\beta }^{\overline{\alpha }%
}f_{k}^{\beta }f_{\overline{j}}^{0})_{\overline{k}}f_{j}^{\alpha }+(%
\widetilde{A}_{\overline{\beta }}^{\alpha }f_{k}^{0}f_{\overline{k}}^{%
\overline{\beta }})_{j}f_{\overline{j}}^{\overline{\alpha }}+(\widetilde{A}%
_{\beta }^{\overline{\alpha }}f_{\overline{k}}^{0}f_{k}^{\beta })_{\overline{%
j}}f_{j}^{\alpha } \right]
\end{aligned}%
\label{equation3.20}
\end{equation}%
where $R_{k\overline{i}j\overline{k}}=R_{kj\overline{k}}^{i}$ and $%
\widetilde{R}_{\alpha \overline{\beta }\gamma \overline{\delta }}=\widetilde{%
R}_{\alpha \gamma \overline{\delta }}^{\beta }$.

An important special kind of $(H,\widetilde{H})$-holomorphic maps are \emph{%
CR} maps, which are defined as follows:

\begin{definition}[{\cite{[DT]}}] \label{definition3.2}
A smooth map $f:(M,H,J,\theta)\rightarrow (N,\widetilde{H},\widetilde{J},\widetilde{\theta })$ between
two pseudo-Hermitian manifolds is called a \emph{CR} map if it is $(H,%
\widetilde{H})$-holomorphic and horizontal. Here the horizontal condition
means that $df(H)\subset \widetilde{H}$.
\end{definition}

Now we want to deduce the Bochner formula of $e_{H,\widetilde{H}}(f)$ for a 
\emph{CR} map. Suppose $f:M\rightarrow N$ is a \emph{CR} map. Then, besides
\eqref{equation3.3}, it also satisfies 
\begin{equation}
f_{k}^{0}=f_{\overline{k}}^{0}=0.  \label{equation3.21}
\end{equation}%
According to \eqref{equation3.7} and \eqref{equation3.21}, we have%
\begin{equation}
f_{0l}^{0}=2\sqrt{-1}\sum_{\alpha }f_{l}^{\alpha }f_{0}^{\overline{\alpha }}
\label{equation3.22}
\end{equation}%
and 
\begin{equation}
f_{0}^{0}\delta _{j}^{i}=\sum_{\alpha }f_{j}^{\alpha }f_{\overline{i}}^{%
\overline{\alpha }}.  \label{equation3.23}
\end{equation}%
In particular, \eqref{equation3.23} gives%
\begin{equation}
f_{0}^{0}=\frac{1}{m}e_{H,\widetilde{H}}(f).  \label{equation3.24}
\end{equation}%
From \eqref{equation3.22} and the fourth equation in \eqref{equation3.11}, it follows that%
\begin{equation*}
f_{0l\overline{l}}^{0}=-4\sum_{\alpha }\mid f_{0}^{\alpha }\mid ^{2}+2\sqrt{%
-1}\sum_{\alpha }f_{l}^{\alpha }f_{0\overline{l}}^{\overline{\alpha }}
\end{equation*}%
which, combining with \eqref{equation3.24}, implies%
\begin{equation}
\frac{1}{m}\bigtriangleup _{b}e_{H,\widetilde{H}}=\bigtriangleup
_{b}f_{0}^{0}=-8m\sum_{\alpha }\mid f_{0}^{\alpha }\mid ^{2}+2\sqrt{-1}%
\sum_{l,\alpha }(f_{l}^{\alpha }f_{0\overline{l}}^{\overline{\alpha }}-f_{%
\overline{l}}^{\overline{\alpha }}f_{0l}^{\alpha })  \label{equation3.25}
\end{equation}%
From \eqref{equation3.11}, \eqref{equation3.20}, we get%
\begin{equation*}
\begin{aligned}
\frac{2m+2}{m}\bigtriangleup _{b}e_{H,\widetilde{H}} & =2\sum_{j,k,\alpha }\mid
f_{jk}^{\alpha }\mid ^{2}-8m(m+1)\sum_{\alpha }\mid f_{0}^{\alpha }\mid ^{2}
\\ 
& \quad +2\sum_{i,j,k,\alpha }f_{i}^{\alpha }f_{\overline{j}}^{\overline{\alpha }%
}R_{k\overline{i}j\overline{k}}-2\sum_{j,k,\alpha ,\beta ,\gamma ,\delta
}f_{k}^{\alpha }f_{\overline{j}}^{\overline{\beta }}f_{j}^{\gamma }f_{%
\overline{k}}^{\overline{\delta }}\widetilde{R}_{\alpha \overline{\beta }%
\gamma \overline{\delta }}%
\end{aligned}%
\end{equation*}%
Therefore we conclude that

\begin{lemma} \label{lemma3.1}
Let $f:(M^{2m+1},H,J,\theta )\rightarrow (N^{2n+1},%
\widetilde{H},\widetilde{J},\widetilde{\theta })$ be a \emph{CR} map between
two pseudo-Hermitian manifolds. Then%
\begin{equation}
\begin{aligned}
\frac{m+1}{m}\bigtriangleup _{b}e_{H,\widetilde{H}} & =\sum_{j,k,\alpha }\mid
f_{jk}^{\alpha }\mid ^{2}-4m(m+1)e_{L,\widetilde{H}} \\ 
& \quad +\sum_{i,j,k,\alpha }f_{i}^{\alpha }f_{\overline{j}}^{\overline{\alpha }}R_{k%
\overline{i}j\overline{k}}-\sum_{j,k,\alpha ,\beta ,\gamma ,\delta
}f_{k}^{\alpha }f_{\overline{j}}^{\overline{\beta }}f_{j}^{\gamma }f_{%
\overline{k}}^{\overline{\delta }}\widetilde{R}_{\alpha \overline{\beta }%
\gamma \overline{\delta }}%
\end{aligned}%
\label{equation3.26}
\end{equation}%
where $e_{L,\widetilde{H}}=\frac{1}{2}\mid \pi _{\widetilde{H}}\circ df\circ
i_{L}\mid ^{2}=\sum_{\alpha }\mid f_{0}^{\alpha }\mid ^{2}$, and $%
i_{L}:L\rightarrow TM$ is the inclusion morphism.
\end{lemma}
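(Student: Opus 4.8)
The plan is to specialize the general Bochner formula \eqref{equation3.20} to the CR case and simplify. A CR map satisfies the extra constraints \eqref{equation3.21}, namely $f_k^0=f_{\overline k}^0=0$, which kill a large fraction of the terms in \eqref{equation3.20}: every summand in which a factor $f_k^0$, $f_{\overline k}^0$, $f_j^0$ or $f_{\overline j}^0$ appears vanishes, so the entire $\widetilde W$-block and the last (four-term) block in \eqref{equation3.20} drop out, as does the $\widetilde A$-torsion term on the $N$ side. Thus from \eqref{equation3.20} one is left with
\[
\bigtriangleup_b e_{H,\widetilde H}=2\sum_{\alpha,j,k}\bigl(|f_{jk}^\alpha|^2+|f_{j\overline k}^\alpha|^2\bigr)+2(m+2)\sqrt{-1}\sum_{j,\alpha}\bigl(f_{\overline j}^{\overline\alpha}f_{0j}^\alpha-f_j^\alpha f_{0\overline j}^{\overline\alpha}\bigr)+2\sum_{i,j,k,\alpha}f_i^\alpha f_{\overline j}^{\overline\alpha}R_{k\overline i j\overline k}-2\sum f_k^\alpha f_{\overline j}^{\overline\beta}f_j^\gamma f_{\overline k}^{\overline\delta}\widetilde R_{\alpha\overline\beta\gamma\overline\delta}.
\]
The remaining work is to eliminate the undesirable second term (the one involving $f_{0j}^\alpha$ and the mixed derivatives), the term $|f_{j\overline k}^\alpha|^2$, and to identify $4m(m+1)e_{L,\widetilde H}$.

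Next I would exploit the structural identities that the excerpt has already recorded for CR maps. Equation \eqref{equation3.23}, $f_0^0\delta_j^i=\sum_\alpha f_j^\alpha f_{\overline i}^{\overline\alpha}$, together with \eqref{equation3.24}, $f_0^0=\tfrac1m e_{H,\widetilde H}$, relates $f_0^0$ to the horizontal energy density; and the fourth equation of \eqref{equation3.11}, $f_{j\overline l}^\alpha=2\sqrt{-1}f_0^\alpha\delta_j^l-\sum_\beta\widetilde A_{\overline\beta}^\alpha f_j^0 f_{\overline l}^{\overline\beta}$, reduces in the CR case (where $f_j^0=0$) to $f_{j\overline l}^\alpha=2\sqrt{-1}f_0^\alpha\delta_j^l$. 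This last identity is the key: it shows $\sum_{\alpha,j,k}|f_{j\overline k}^\alpha|^2=\sum_{\alpha,j}|2\sqrt{-1}f_0^\alpha|^2\cdot(\text{count})=4m\sum_\alpha|f_0^\alpha|^2=4m\cdot 2e_{L,\widetilde H}$ after being careful with the Kronecker delta bookkeeping, so $2\sum|f_{j\overline k}^\alpha|^2$ contributes something proportional to $e_{L,\widetilde H}$. The derivation displayed just before Lemma \ref{lemma3.1} already carries out exactly this reduction: using \eqref{equation3.22}, \eqref{equation3.24} one gets \eqref{equation3.25}, $\tfrac1m\bigtriangleup_b e_{H,\widetilde H}=\bigtriangleup_b f_0^0=-8m\sum_\alpha|f_0^\alpha|^2+2\sqrt{-1}\sum_{l,\alpha}(f_l^\alpha f_{0\overline l}^{\overline\alpha}-f_{\overline l}^{\overline\alpha}f_{0l}^\alpha)$, and this is precisely the combination needed to cancel the mixed-derivative term.

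Then the endgame is a linear combination. Taking $\bigtriangleup_b e_{H,\widetilde H}$ as computed from the reduced \eqref{equation3.20} and adding an appropriate multiple of \eqref{equation3.25} (the excerpt's intermediate display shows the coefficient is $m+2$, producing $\tfrac{2m+2}{m}\bigtriangleup_b e_{H,\widetilde H}$ on the left after the $|f_{j\overline k}^\alpha|^2$ piece has also been converted via \eqref{equation3.11}), the term $2(m+2)\sqrt{-1}\sum(f_{\overline j}^{\overline\alpha}f_{0j}^\alpha-f_j^\alpha f_{0\overline j}^{\overline\alpha})$ cancels exactly against the corresponding term coming from \eqref{equation3.25}, the $|f_{j\overline k}^\alpha|^2$ contributions combine with the $-8m(m+1)\sum|f_0^\alpha|^2$ to give $-4m(m+1)e_{L,\widetilde H}$ after using $e_{L,\widetilde H}=\sum_\alpha|f_0^\alpha|^2$ (note $e_{L,\widetilde H}$ is defined with the factor $\tfrac12$ absorbed), and the curvature terms survive untouched. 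Dividing by $2$ yields \eqref{equation3.26}. The main obstacle is purely computational: keeping precise track of the index ranges, the Kronecker-delta contractions (which summand contributes a factor $m$, which contributes $1$), the numerical coefficients $2\sqrt{-1}$, $(m+2)$, $8m$, and the placement of the $\tfrac12$ in the definitions of $e_{H,\widetilde H}$ and $e_{L,\widetilde H}$, so that the two boxed cancellations come out exactly. There is no conceptual difficulty beyond that bookkeeping, since every needed identity — \eqref{equation3.11}, \eqref{equation3.14}, \eqref{equation3.20}, \eqref{equation3.21}, \eqref{equation3.22}, \eqref{equation3.23}, \eqref{equation3.24}, \eqref{equation3.25} — is already in hand.
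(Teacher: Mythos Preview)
Your proposal is correct and follows exactly the paper's own argument: specialize \eqref{equation3.20} using the CR condition \eqref{equation3.21} to drop all torsion and $\widetilde W$-terms, replace $|f_{j\overline k}^\alpha|^2$ via the fourth line of \eqref{equation3.11} (which becomes $f_{j\overline l}^\alpha=2\sqrt{-1}f_0^\alpha\delta_j^l$), then add $(m+2)$ times \eqref{equation3.25} to cancel the mixed $f_{0j}^\alpha$-term and produce the $-8m(m+1)\sum_\alpha|f_0^\alpha|^2$ contribution, and finally divide by $2$. There is no alternate route here; the paper's derivation preceding the lemma is precisely the computation you outline.
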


Note that for a map $f:(M,H,J,\theta )\rightarrow (N,\widetilde{H},%
\widetilde{J},\widetilde{\theta })$, the linear map $df_{H,\widetilde{H}%
}:H\rightarrow \widetilde{H}$ gives a dual map $f_{H,\widetilde{H}}^{\ast }:%
\widetilde{H}^{\ast }\rightarrow H^{\ast }$ at each point of $M$, which
induces a map from $\otimes ^{p}\widetilde{H}^{\ast }$ to $\otimes
^{p}H^{\ast }$ for any $p\geq 1$. For example, $f_{H,\widetilde{H}}^{\ast
}L_{\widetilde{\theta }}$ is a symmetric $2$-tensor field defined by%
\begin{equation}
\left( f_{H,\widetilde{H}}^{\ast }L_{\widetilde{\theta }}\right)
(X,Y)=L_{\theta }(df_{H,\widetilde{H}}(X),df_{H,\widetilde{H}}(Y)) 
\label{equation3.27}
\end{equation}%
for any $X,Y\in H$. If $f$ is a \emph{CR} map, then \eqref{equation3.21} implies 
\begin{equation}
f^{\ast }\widetilde{\theta }=f_{0}^{0}\theta  \label{equation3.28}
\end{equation}%
and thus%
\begin{equation}
f^{\ast }d\widetilde{\theta }=df_{0}^{0}\wedge \theta +f_{0}^{0}d\theta . 
\label{equation3.29}
\end{equation}%
When both pseudo-Hermitian manifolds $M$ and $N$ have the same dimension,
one may consider the ratio of the horizontal volume elements (resp. volume
elements) under the \emph{CR} map $f$, which is given by using \eqref{equation3.28} and
\eqref{equation3.29} that
\begin{equation*}
\frac{f_{H,\widetilde{H}}^{\ast }\Omega _{N}^{\widetilde{H}}}{\Omega _{M}^{H}
}=\frac{(f_{H,\widetilde{H}}^{\ast }\Omega _{N}^{\widetilde{H}})(\eta
_{1},...,\eta _{m},\eta _{\overline{1}},...,\eta _{\overline{m}})}{\Omega
_{M}^{H}(\eta _{1},...,\eta _{m},\eta _{\overline{1}},...,\eta _{\overline{m}
})}=(f_{0}^{0})^{m} , \qquad 
\left( \text{resp. }\frac{f^{\ast }\Omega _{N}}{\Omega _{M}}=(f_{0}^{0})^{m+1} \right) .
\end{equation*}
Hence, according to \eqref{equation3.24}, we find that the estimate for $e_{H,\widetilde{H}
}(f)$ will essentially give the estimates for the ratios of these volume
elements.

Another important special kind of $(H,\widetilde{H})$-holomorphic maps are
transverally holomorphic maps.

\begin{definition} \label{definition3.3}
A smooth map $f:(M,H,J,\theta )\rightarrow (N,%
\widetilde{H},\widetilde{J},\widetilde{\theta })$ between two
pseudo-Hermitian manifolds is called a \emph{transversally holomorphic map}
if it is $(H,\widetilde{H})$-holomorphic and foliated. Here the foliated
condition means that $df(L)\subset \widetilde{L}$.
\end{definition}

\begin{remark} \label{remark3.1}
Transversally holomorphic maps between K\"{a}hlerian
foliations (resp. transversally Hermitian manifolds) have been investigated
in \cite{[BD]} (resp. \cite{[KW]}). Transversally holomorphic maps between
pseudo-Hermitian manifolds in the sense of Definition \ref{definition3.3} were studied in
\cite{[Do]} under the name of foliated $(H,\widetilde{H})$-holomorphic maps too.
\end{remark}

In the remaining part of this section, we suppose $f$ is a transverally
holomorphic map between pseudo-Hermitian manifolds. Clearly the foliated
condition is equivalent to%
\begin{equation}
f_{0}^{\alpha }=f_{0}^{\overline{\alpha }}=0.  \label{equation3.30}
\end{equation}%
From \eqref{equation3.20} and \eqref{equation3.30}, we get immediately the following.

\begin{lemma} \label{lemma3.2}
Let $f:(M^{2m+1},H,J,\theta )\rightarrow (N^{2n+1},%
\widetilde{H},\widetilde{J},\widetilde{\theta })$ be a transverally
holomorphic map. If $N$ is Sasakian, then%
\begin{equation*}
\frac{1}{2} \bigtriangleup _{b}e_{H,\widetilde{H}}=\sum_{\alpha ,j,k}\mid f_{jk}^{\alpha
}\mid ^{2}+\sum_{i,j,k,\alpha }f_{i}^{\alpha }f_{\overline{j}}^{\overline{%
\alpha }}R_{k\overline{i}j\overline{k}}-\sum_{j,k,\alpha ,\beta ,\gamma
,\delta }f_{k}^{\alpha }f_{\overline{j}}^{\overline{\beta }}f_{j}^{\gamma
}f_{\overline{k}}^{\overline{\delta }}\widetilde{R}_{\alpha \overline{\beta }%
\gamma \overline{\delta }}.
\end{equation*}
\end{lemma}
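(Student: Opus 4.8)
The plan is to specialize the general Bochner formula \eqref{equation3.20} to the case of a transversally holomorphic map $f$ with $N$ Sasakian, and simply track which terms vanish. The two hypotheses are: the foliated condition \eqref{equation3.30}, namely $f_0^\alpha = f_0^{\overline\alpha} = 0$, and the Sasakian condition on $N$, namely $\widetilde A = 0$ (equivalently the pseudo-Hermitian torsion $\widetilde\tau$ of $N$ vanishes), which by the relations $\widetilde W_{\beta\gamma}^\alpha = \widetilde A_{\beta,\overline\alpha}^{\overline\gamma}$ and $\widetilde W_{\beta\overline\gamma}^\alpha = \widetilde A_{\overline\gamma,\beta}^\alpha$ also forces all the $\widetilde W$-coefficients to vanish.

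Concretely, I would proceed as follows. First I would recall \eqref{equation3.20} as the master identity for $\bigtriangleup_b e_{H,\widetilde H}$. Then I would go line by line: the term $2(m+2)\sqrt{-1}\sum_{j,\alpha}(f_{\overline j}^{\overline\alpha} f_{0j}^\alpha - f_j^\alpha f_{0\overline j}^{\overline\alpha})$ vanishes because, from the second and third equations of \eqref{equation3.11} together with $\widetilde A = 0$, we get $f_{0j}^\alpha = f_{j0}^\alpha$ and $f_{0\overline j}^\alpha = \sum_k f_k^\alpha A_{\overline j}^k$, and then invoking $f_0^\alpha = 0$ differentiated — more cleanly, one observes that the fourth equation of \eqref{equation3.11}, $f_{j\overline l}^\alpha = 2\sqrt{-1} f_0^\alpha \delta_j^l - \sum_\beta \widetilde A_{\overline\beta}^\alpha f_j^0 f_{\overline l}^{\overline\beta}$, becomes $f_{j\overline l}^\alpha = 0$ under both hypotheses, and similarly the covariant derivatives of $f_0^\alpha$ in horizontal directions are controlled by $f_{0l}^\alpha$ and $f_{0\overline l}^\alpha$ which vanish; so all mixed and $0$-indexed second-derivative terms drop. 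The curvature term $2\sum_{i,j,k,\alpha} f_i^\alpha f_{\overline j}^{\overline\alpha} R_{k\overline i j\overline k}$ survives untouched, as does the target-curvature term $-2\sum f_k^\alpha f_{\overline j}^{\overline\beta} f_j^\gamma f_{\overline k}^{\overline\delta}\widetilde R_{\alpha\overline\beta\gamma\overline\delta}$. The torsion term $2\sqrt{-1}\sum(\widetilde A_\beta^{\overline\alpha}\cdots - \widetilde A_{\overline\beta}^\alpha\cdots)$ vanishes since $\widetilde A = 0$. All four $\widetilde W$-terms vanish for the same reason, and the final bracket of four terms involving $(\widetilde A \cdots)_k$, $(\widetilde A\cdots)_{\overline k}$, etc., vanishes because each summand carries an undifferentiated $\widetilde A$ as a factor in the parenthesis — one must check that the covariant derivative $(\widetilde A_{\overline\beta}^\alpha \cdots)_k$ of a product vanishes when $\widetilde A \equiv 0$ on all of $N$, which follows since $\widetilde A$ being identically zero forces its covariant derivatives to be zero as well. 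What remains is exactly $\bigtriangleup_b e_{H,\widetilde H} = 2\sum_{\alpha,j,k}|f_{jk}^\alpha|^2 + 2\sum f_i^\alpha f_{\overline j}^{\overline\alpha} R_{k\overline i j\overline k} - 2\sum f_k^\alpha f_{\overline j}^{\overline\beta} f_j^\gamma f_{\overline k}^{\overline\delta}\widetilde R_{\alpha\overline\beta\gamma\overline\delta}$; dividing by $2$ gives the stated formula.

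The only genuinely delicate point — hardly an obstacle, but the step deserving care — is justifying that under the Sasakian hypothesis on $N$ every occurrence of $\widetilde A$ and $\widetilde W$ and their covariant derivatives vanishes, including the differentiated expressions $(\widetilde A_{\overline\beta}^\alpha f_{\overline k}^{\overline\beta} f_j^0)_k$ in the last bracket of \eqref{equation3.20}; here one should note that these are covariant derivatives along the map $f$ of products whose first factor $\widetilde A$ is identically zero on $N$, hence pulls back to zero along $f$, hence the whole product and its covariant derivative vanish. I would also remark in passing that, unlike Lemma \ref{lemma3.1} for \emph{CR} maps, no relation of the form \eqref{equation3.23} is available here, so the coefficient $\frac{m+1}{m}$ of Lemma \ref{lemma3.1} is replaced by the plain $\frac12$, reflecting that the horizontal energy $e_{H,\widetilde H}$ is not tied to $f_0^0$. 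This completes the proof.
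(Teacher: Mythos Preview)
Your proposal is correct and follows exactly the paper's approach: the paper's proof of Lemma~\ref{lemma3.2} is the single sentence ``From \eqref{equation3.20} and \eqref{equation3.30}, we get immediately the following,'' and you have simply (and accurately) filled in the details of that specialization, noting in particular that $f_{j\overline l}^{\alpha}=0$ kills the $|f_{j\overline k}^{\alpha}|^{2}$ term, that $f_{0}^{\alpha}\equiv 0$ forces $f_{0j}^{\alpha}=f_{0\overline j}^{\alpha}=0$, and that $\widetilde A\equiv 0$ annihilates all remaining torsion and $\widetilde W$ terms together with their derivatives. Your closing remark contrasting the $\tfrac{1}{2}$ here with the $\tfrac{m+1}{m}$ of Lemma~\ref{lemma3.1} is a nice observation but not part of the argument itself.
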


Now we turn to deduce the Bochner formula for the ratio of the horizontal
volume elements under a transversally holomorphic map $f:M\rightarrow N$
with additional assumptions that $\dim M=\dim N=2m+1$ and $N$ is Sasakian.
By \eqref{equation2.9}, \eqref{equation2.10} and the first equation in \eqref{equation2.14}, the horizontal volume
elements of $M$ and $N$ are given, up to a constant, by%
\begin{equation*}
\Omega _{M}^{H}=\theta ^{1}\wedge \cdots \wedge \theta ^{m}\wedge \theta ^{%
\overline{1}}\wedge \cdots \wedge \theta ^{\overline{m}}
\end{equation*}%
and%
\begin{equation*}
\Omega _{N}^{H}=\widetilde{\theta }^{1}\wedge \cdots \wedge \widetilde{%
\theta }^{m}\wedge \widetilde{\theta }^{\overline{1}}\wedge \cdots \wedge 
\widetilde{\theta }^{\overline{m}}
\end{equation*}%
respectively. Using \eqref{equation3.5} and \eqref{equation3.30}, we obtain%
\begin{equation}
v=\frac{f^{\ast }\Omega _{N}^{H}}{\Omega _{M}^{H}}=\Phi \overline{\Phi } 
\label{equation3.31}
\end{equation}%
where $\Phi =\det \left( f_{i}^{\alpha }\right) $. By the assumptions for $f$
and $N$, we have from \eqref{equation3.11} and \eqref{equation3.30} that%
\begin{equation}
f_{j\overline{l}}^{\alpha }=0,\text{ \ }f_{j0}^{\alpha }=0.  \label{equation3.32}
\end{equation}%
Consequently \eqref{equation3.10} and \eqref{equation3.32} yield 
\begin{equation}
df_{j}^{\alpha }=\sum_{l}f_{jl}^{\alpha }\theta ^{l}+\sum_{k}f_{k}^{\alpha
}\theta _{j}^{k}-\sum_{\beta }f_{j}^{\beta }\widetilde{\theta }_{\beta
}^{\alpha }  \label{equation3.33}
\end{equation}%
Using \eqref{equation3.33}, we derive%
\begin{equation}
d\Phi =\sum_{l}\Phi _{l}\theta ^{l}+\sum_{\bar{l}}\Phi _{\overline{l}}\theta
^{\overline{l}}+\Phi _{0}\theta \\ 
=\sum_{l}\Psi _{l}\theta ^{l}+\Phi {\big(}\sum_{j}\theta
_{j}^{j}-\sum_{\alpha }\widetilde{\theta }_{\alpha }^{\alpha }{\big)}
\label{equation3.34}
\end{equation}%
where 
\begin{equation*}
\Psi _{l}=\Phi \sum_{\alpha ,j}b_{\alpha }^{j}f_{jl}^{\alpha }\text{, \ \ }%
(b_{\alpha }^{j})=(f_{j}^{\alpha })^{-1}.
\end{equation*}%
Combining \eqref{equation2.15} and \eqref{equation3.34}, we obtain%
\begin{equation}
dv=\overline{\Phi }d\Phi +\Phi d\overline{\Phi } \\ 
=\sum_{l}(\overline{\Phi }\Psi _{l}\theta ^{l}+\Phi \overline{\Psi _{l}}%
\theta ^{\overline{l}})
\label{equation3.35}
\end{equation}%
that is, $v_{l}=\overline{\Phi }\Psi _{l}$ and $v_{0}=0$. Taking the
exterior derivative of \eqref{equation3.34} and using the structure equations in both $M$
and $N$, we have%
\begin{equation}
\begin{aligned}
0=d^{2}\Phi &=\sum_{l}\left( d\Psi _{l}-\sum_{k}\Psi _{k}\theta _{l}^{k}-\Psi
_{l}(\sum_{i}\theta _{i}^{i}-\sum_{\alpha }\widetilde{\theta }_{\alpha
}^{\alpha })\right) \wedge \theta ^{l} \\ 
& \quad +\sum_{k}\Psi _{k}\theta \wedge \tau^{k} +\Phi \left(\sum_{i}d\theta _{i}^{i}-\sum_{\alpha }d\widetilde{\theta }_{\alpha
}^{\alpha } \right) \\ 
& =\sum_{l}\left( d\Psi _{l}-\sum_{k}\Psi _{k}\theta _{l}^{k}-\Psi
_{l}(\sum_{i}\theta _{i}^{i}-\sum_{\alpha }\widetilde{\theta }_{\alpha
}^{\alpha })\right) \wedge \theta ^{l}+\sum_{k,j}\Psi _{k}A_{\overline{j}%
}^{k}\theta \wedge \theta ^{\overline{j}} \\ 
& \quad +2\sqrt{-1}\Phi \sum_{i,j}(\theta ^{i}\wedge A_{j}^{\overline{i}}\theta
^{j}+2\sqrt{-1}\theta ^{\overline{i}}\wedge A_{\overline{j}}^{i}\theta ^{%
\overline{j}})+\sum_{i,k,l}R_{ik\overline{l}}^{i}\theta ^{k}\wedge \theta ^{%
\overline{l}} \\ 
 & \quad +\sum_{i,k}(W_{ik}^{i}\theta ^{k}-W_{i\overline{k}}^{i}\theta ^{\overline{k}%
})\wedge \theta -\Phi \sum_{i,j,\alpha ,\gamma ,\delta }f_{i}^{\gamma }f_{%
\overline{j}}^{\overline{\delta }}\widetilde{R}_{\alpha \gamma \overline{%
\delta }}^{\alpha }\theta ^{i}\wedge \theta ^{\overline{j}}%
\end{aligned}
\label{equation3.36}
\end{equation}%
We put 
\begin{equation}
d\Psi _{l}-\sum_{k}\Psi _{k}\theta _{l}^{k}-\Psi _{l}(\sum_{i}\theta
_{i}^{i}-\sum_{\alpha }\widetilde{\theta }_{\alpha }^{\alpha })=\psi
_{l0}\theta +\sum_{k}(\psi _{lk}\theta ^{k}+\psi _{l\overline{k}}\theta ^{%
\overline{k}}).  \label{equation3.37}
\end{equation}%
Substituting \eqref{equation3.37} into \eqref{equation3.36} gives%
\begin{equation}
\psi _{k\overline{l}}=\Phi R_{k\overline{l}}-\Phi \sum_{\gamma ,\delta }%
\widetilde{R}_{\gamma \overline{\delta }}f_{k}^{\gamma }f_{\overline{l}}^{%
\overline{\delta }}.  \label{equation3.38}
\end{equation}%
According to \eqref{equation3.34}, \eqref{equation3.35} and \eqref{equation3.37}, we deduce that%
\begin{equation}
dv_{k}-\sum_{j}v_{j}\theta _{k}^{j}=\sum_{l}(\Psi _{k}\overline{\Psi _{l}}+%
\overline{\Phi }\psi _{k\overline{l}})\theta ^{\overline{l}}+\sum_{l}%
\overline{\Phi }\psi _{kl}\theta ^{l}+\overline{\Phi }\psi _{k0}\theta . 
\label{equation3.39}
\end{equation}%
It follows from \eqref{equation3.38} and \eqref{equation3.39} that%
\begin{equation}
v_{k\overline{l}}=\Psi _{k}\overline{\Psi _{l}}+\overline{\Phi }\psi _{k%
\overline{l}}=\Psi _{k}\overline{\Psi _{l}}+v(R_{k\overline{l}}-\widetilde{R}%
_{\gamma \overline{\delta }}f_{k}^{\gamma }f_{\overline{l}}^{\overline{%
\delta }}).  \label{equation3.40}
\end{equation}%
Therefore we get immediately from \eqref{equation2.20}, \eqref{equation2.21} and \eqref{equation3.40} the following
lemma.

\begin{lemma} \label{lemma3.3}
Let $f:M\rightarrow N$ be a transversally holomorphic
map between two pseudo-Hermitian manifolds of the same dimension $2m+1$. If $%
N$ is Sasakian, one has%
\begin{equation}
\frac{1}{2}\triangle _{b}v=\sum_{k}\Psi _{k}\overline{\Psi _{k}}%
+v \left(R-\sum_{k,\gamma ,\delta }\widetilde{R}_{\gamma \overline{\delta }%
}f_{k}^{\gamma }f_{\overline{k}}^{\overline{\delta }} \right)  \label{equation3.41}
\end{equation}%
where $\Psi _{k}\ $is given by \eqref{equation3.34}. In particular, if $v>0$, then%
\begin{equation}
\frac{1}{2}\triangle _{b}\log v=R-\sum_{k,\gamma ,\delta }\widetilde{R}%
_{\gamma \overline{\delta }}f_{k}^{\gamma }f_{\overline{k}}^{\overline{%
\delta }}.  \label{equation3.42}
\end{equation}
\end{lemma}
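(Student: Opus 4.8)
The plan is to follow the computation that was already set up for $d\Phi$ and iterate it one more order, extracting the $\theta^k \wedge \theta^{\overline{l}}$ component to get the second covariant derivative $\psi_{k\overline{l}}$ of the auxiliary functions $\Psi_l$, and then feed this into the definition of the sub-Laplacian via the identity $v_{k\overline{l}} = \Psi_k\overline{\Psi_l} + \overline{\Phi}\psi_{k\overline{l}}$. Concretely, the steps are: (1) take the exterior derivative of \eqref{equation3.34}, using $d^2\Phi = 0$ on the left and substituting the structure equations \eqref{equation2.14} (with \eqref{equation2.16}) on both $M$ and $N$ for the terms $d\theta^l$, $d\theta_j^j$, $d\widetilde{\theta}_\alpha^\alpha$; since $N$ is Sasakian its torsion $\widetilde{\tau}$ vanishes, which kills the $\widetilde{W}$-type terms coming from $N$ and simplifies $\sum_\alpha d\widetilde{\theta}_\alpha^\alpha$ to just the curvature part $\sum \widetilde{R}_{\gamma\overline{\delta}} f_i^\gamma f_{\overline{j}}^{\overline{\delta}}\,\theta^i\wedge\theta^{\overline{j}}$. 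This is exactly equation \eqref{equation3.36}.

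(2) Introduce the covariant-type derivative \eqref{equation3.37} of $\Psi_l$ and read off from \eqref{equation3.36} the coefficient of $\theta^k\wedge\theta^{\overline{l}}$. Here one must be careful that contributions to this coefficient come from three places: the $\psi_{l\overline{k}}\,\theta^{\overline{k}}\wedge\theta^l$ part of the first sum (antisymmetrizing gives the mixed component), the terms $\sum_{i,k,l}R_{ik\overline{l}}^i\,\theta^k\wedge\theta^{\overline{l}} = \sum R_{k\overline{l}}\,\theta^k\wedge\theta^{\overline{l}}$ using the definition \eqref{equation2.19} of the pseudo-Hermitian Ricci curvature, and the $N$-curvature term $-\Phi\sum \widetilde{R}_{\gamma\overline{\delta}} f_k^\gamma f_{\overline{l}}^{\overline{\delta}}\,\theta^k\wedge\theta^{\overline{l}}$. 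The Sasakian hypothesis on $N$ guarantees that all torsion-type ($\widetilde{W}$) contributions from the target are absent; this yields precisely \eqref{equation3.38}, namely $\psi_{k\overline{l}} = \Phi R_{k\overline{l}} - \Phi\sum_{\gamma,\delta}\widetilde{R}_{\gamma\overline{\delta}} f_k^\gamma f_{\overline{l}}^{\overline{\delta}}$.

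(3) Differentiate \eqref{equation3.35}, the formula $dv = \sum_l(\overline{\Phi}\Psi_l\,\theta^l + \Phi\overline{\Psi_l}\,\theta^{\overline{l}})$, covariantly to obtain \eqref{equation3.39}, and extract the coefficient of $\theta^{\overline{l}}$ to get $v_{k\overline{l}} = \Psi_k\overline{\Psi_l} + \overline{\Phi}\psi_{k\overline{l}}$, which upon substituting \eqref{equation3.38} and using $v = \Phi\overline{\Phi}$ becomes \eqref{equation3.40}. Since $v$ is globally defined (a ratio of volume forms, hence a genuine function), there is no frame ambiguity. Finally, trace over $k = l$ and use \eqref{equation2.21} together with $v_{0} = 0$ (from \eqref{equation3.35}) and $v_{k\overline{l}} = \overline{v_{l\overline{k}}}$ so that $\triangle_b v = \sum_k(v_{k\overline{k}} + v_{\overline{k}k}) = 2\sum_k v_{k\overline{k}}$; together with $\sum_k R_{k\overline{k}} = R$ by \eqref{equation2.20} this gives \eqref{equation3.41}. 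For the last assertion, assuming $v > 0$ on the (necessarily open) locus where $f$ is a submersion, apply $\triangle_b\log v = \frac{\triangle_b v}{v} - \frac{|\nabla_b v|^2}{v^2}$; since $v_k = \overline{\Phi}\Psi_k$ gives $|\nabla_b v|^2 = \sum_k v_k\overline{v_k}\cdot 2 = 2\sum_k\Phi\overline{\Phi}\Psi_k\overline{\Psi_k} = 2v\sum_k\Psi_k\overline{\Psi_k}$, the gradient term exactly cancels the $\sum_k\Psi_k\overline{\Psi_k}$ term, leaving \eqref{equation3.42}.

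The main obstacle is the bookkeeping in step (2): correctly collecting \emph{all} the $\theta^k\wedge\theta^{\overline{l}}$ contributions from \eqref{equation3.36} — in particular keeping track of signs from antisymmetrization of $\theta^{\overline{k}}\wedge\theta^l$ versus $\theta^l\wedge\theta^{\overline{l}}$, and verifying that the connection-form terms $\sum_i\theta_i^i - \sum_\alpha\widetilde{\theta}_\alpha^\alpha$ appearing in \eqref{equation3.34} do not leak spurious $(1,1)$-contributions once one differentiates — and making sure the target torsion really drops out under the Sasakian assumption. The rest is essentially formal once \eqref{equation3.38} and \eqref{equation3.40} are in hand.
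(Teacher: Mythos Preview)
Your proposal is correct and follows exactly the paper's own approach: the paper derives \eqref{equation3.36} from $d^2\Phi=0$ and the structure equations, reads off \eqref{equation3.38} from the $\theta^k\wedge\theta^{\overline{l}}$ component via \eqref{equation3.37}, obtains \eqref{equation3.40} from \eqref{equation3.39}, and then simply says the lemma follows from \eqref{equation2.20}, \eqref{equation2.21} and \eqref{equation3.40}. Your write-up actually supplies more detail than the paper does on the final trace step and on the cancellation yielding \eqref{equation3.42}.
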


\section{Schwarz lemmas for \emph{CR} maps} \label{section4}

First, we recall the following sub-Laplacian comparison result for
pseudo-Hermitian manifolds.

\begin{lemma}[\cite{[CDRZ]}] \label{lemma4.1}
Let $(M^{2m+1},H,J,\theta )$ be a complete
pseudo-Hermitian manifold with pseudo-Hermitian Ricci curvature bounded from
below by $-mk_{1}\leq 0$ and $\Vert A\Vert _{C^{1}}\ $bounded from above by $%
k$ ($k\geq 0$). Let $r$ be the Riemannian distance of $g_{\theta }$ relative
to a fixed point $p$. Then if $x\ $is not on the cut locus of $p$, we have%
\begin{equation*}
\bigtriangleup _{b}r(x)\leq C\left( \frac{1}{r}+\sqrt{1+k_{1}+k+k^{2}}\right)
\end{equation*}%
where $C$ is a constant depending only on $m$.
\end{lemma}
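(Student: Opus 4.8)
The statement to prove is the sub-Laplacian comparison estimate of Lemma \ref{lemma4.1}, attributed to \cite{[CDRZ]}, so the task is essentially to reconstruct the argument that produces the Riemannian-type bound $\triangle_b r \le C(1/r + \sqrt{1+k_1+k+k^2})$ from a Ricci lower bound on the horizontal directions together with a $C^1$-bound on the pseudo-Hermitian torsion $A$. The natural strategy is to pass through the Riemannian Laplacian of the Webster metric $g_\theta$ and the associated Bochner/Riccati machinery, and then relate $\triangle_b r$ to $\triangle r$ by a controlled error term involving $A$ and its derivative. Concretely, I would first write $\triangle_b r = \triangle r - (\nabla dr)(\xi,\xi)$, where $\triangle$ is the Levi--Civita Laplacian and $\nabla dr$ is the full Hessian; the point is to bound the off-horizontal Hessian term $(\nabla dr)(\xi,\xi)$ in terms of $\|A\|$ and $\|A\|_{C^1}$, using the relation between the Tanaka--Webster connection and the Levi--Civita connection of $g_\theta$ (the difference tensor is built from $d\theta$, $J$ and $A$, hence controlled by $1+k$), so that this correction contributes at most a $\sqrt{1+k+k^2}$-type term.

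For the principal term $\triangle r$, I would run the standard Riccati comparison along a minimizing $g_\theta$-geodesic from $p$ to $x$: letting $\varphi = \triangle r$, the Bochner formula gives the inequality $\varphi' + \frac{1}{2m}\varphi^2 + \mathrm{Ric}_{g_\theta}(\partial_r,\partial_r) \le 0$ (here $\dim M = 2m+1$, so the trace normalization carries a $2m$), and the key input is a lower bound $\mathrm{Ric}_{g_\theta}(\partial_r,\partial_r) \ge -(2m)(k_1 + \text{const}\cdot(k+k^2))$. That lower bound is obtained by expressing the Webster--Ricci curvature in terms of the pseudo-Hermitian Ricci curvature $R_{i\bar j}$, the torsion $A$ and its covariant derivative — this is a known curvature identity (the "Webster vs. Tanaka--Webster Ricci" comparison, cf.\ the structure equations \eqref{equation2.14}--\eqref{equation2.16}), in which the extra terms beyond $R_{i\bar j}$ are quadratic in $A$ and linear in $\nabla A$, hence bounded by $k + k^2$ given $\|A\|_{C^1} \le k$. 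Integrating the Riccati inequality with the initial asymptotics $\varphi \sim 2m/r$ as $r \to 0^+$ yields $\triangle r \le 2m\sqrt{k_1 + c(k+k^2)}\,\coth(\sqrt{\cdots}\,r) \le C(1/r + \sqrt{1 + k_1 + k + k^2})$, absorbing constants into $C = C(m)$.

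Finally I would combine the two pieces: $\triangle_b r \le \triangle r + |(\nabla dr)(\xi,\xi)| \le C(1/r + \sqrt{1+k_1+k+k^2}) + C'\sqrt{1+k+k^2}$, and fold everything into a single constant depending only on $m$. The step I expect to be the main obstacle is controlling $(\nabla dr)(\xi,\xi)$ — equivalently, the discrepancy between the Tanaka--Webster Hessian trace and the Riemannian Hessian trace of $r$ — since $r$ is only Lipschitz globally and the bound must be uniform in the curvature parameters; this requires care with the difference tensor between $\nabla$ and the Levi--Civita connection and with the behaviour of $dr(\xi)$, and it is where the $C^1$-norm (rather than just $C^0$-norm) of $A$ genuinely enters, because differentiating the connection-difference terms produces $\nabla A$. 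The curvature identity relating Webster--Ricci to pseudo-Hermitian Ricci is the other technical ingredient, but it is a purely algebraic consequence of the structure equations already recorded above.
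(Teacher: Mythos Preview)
The paper does not prove Lemma~\ref{lemma4.1}: it is stated with a citation to \cite{[CDRZ]} and used as a black box, so there is no in-paper argument to compare your proposal against.

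As a standalone reconstruction your outline is broadly reasonable, but one step deserves a warning. Your decomposition $\triangle_b r = \triangle r - (\nabla^\theta dr)(\xi,\xi)$ is correct (the horizontal traces of the Tanaka--Webster and Levi--Civita Hessians agree because $d\theta$ is skew and $A$ is trace-free, cf.\ \eqref{equation5.5}), and since $\nabla^\theta_\xi\xi = 0$ this reduces to $\triangle_b r = \triangle r - \xi^2 r$. The Riemannian Riccati comparison for $\triangle r$ goes through exactly as you describe once you have the Webster--Ricci lower bound, and that bound does follow from the curvature identity \eqref{equation5.6} with errors quadratic in $A$ and linear in $\nabla A$. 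The delicate point is the remaining term $\xi^2 r$: this is a genuine second derivative of $r$ in the Reeb direction, not merely a connection-difference term, and it is not obviously bounded by an expression in $k$ alone. You would need a separate argument---for instance, a Riccati-type inequality along the geodesic for the $\xi$-component of the Hessian, or the index-form estimate applied to carefully chosen variation fields involving $\xi$---to extract a bound of the form $|\xi^2 r| \le C(1/r + \sqrt{1+k_1+k+k^2})$. This is where the $C^1$-control on $A$ is really used (through $\nabla A$ appearing in the Jacobi operator in the $\xi$-direction), and it is the step your sketch leaves implicit. The actual argument in \cite{[CDRZ]} may well bypass the Riemannian Laplacian entirely and run a direct sub-Riemannian index/Bochner argument, but your route can be made to work if you close this gap.
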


\begin{remark} \label{remark4.1}
For a pseudo-Hermitian manifold, there are two notions
of completeness, which are defined respectively by the Carnot-Carath\'{e}%
odory distance $r_{CC}$ of $L_{\theta }$ and the Riemannian distance $r$ of $%
g_{\theta }$. Actually they are equivalent, since these two distance
functions are locally controlled by each other (cf. for example, \cite{[NSW]}).
\end{remark}

\begin{example} \label{example4.1}
Let $(M,H,J,\theta )$ be a complete Sasakian manifold
with pseudo-Hermitian Ricci curvature bounded from below (e.g. the
Heisenberg group). Set $\widehat{\theta }=e^{2\varphi }\theta $ with $%
\varphi \in C^{\infty }(M)$. By Lemma 2.7 in \cite{[DT]}, we know that the
pseudo-Hermitian torsion of $(M,H,J,\widehat{\theta })$ is given by%
\begin{equation}
\widehat{A}_{ij}=\sqrt{-1}(\varphi _{ij}-\varphi _{i}\varphi _{j}). 
\label{equation4.1}
\end{equation}%
In terms of \eqref{equation2.16} and \eqref{equation4.1}, we find that if $\Vert \varphi \Vert _{C^{3}}$
is bounded from above, then $(M,H,J,\widehat{\theta })$ is a complete
pseudo-Hermitian manifold with pseudo-Hermitian Ricci curvature bounded from
below and $\Vert \widehat{A}\Vert _{C^{1}}$ bounded from above. Therefore we
may construct many examples of pseudo-Hermitian manifolds satisfying the
conditions in Lemma \ref{lemma4.1}.
\end{example}

Now we recall the notion of dilatation for a map between two Riemannian
manifolds. Suppose $A:V_{1}\rightarrow V_{2}$ is a linear map between two
Euclidean linear spaces. Let 
\begin{equation*}
\lambda _{1}\geq \lambda _{2}\geq ...\geq \lambda _{l}\geq 0
\end{equation*}%
be the eigenvalues of the positive semidefinite symmetric matrix $A^{t}A$,
where $l=\dim V_{1}$ and $A^{t}$ denotes the transpose of $A$. If there is a
positive number $\Lambda $ such that 
\begin{equation}
\lambda _{1}\leq \Lambda ^{2}\left( \sum_{k=2}^{l}\lambda _{k}\right) , 
\label{equation4.2}
\end{equation}%
one says that the generalized dilatation of $A$ is less that or equal to $%
\Lambda $. Note that \eqref{equation4.2} is equivalent to%
\begin{equation}
\lambda _{1}\leq \frac{\Lambda ^{2}}{1+\Lambda ^{2}}\left(
\sum_{k=1}^{l}\lambda _{k}\right) .  \label{equation4.3}
\end{equation}%
If $A\neq 0$ (i.e., $\lambda _{1}>0$), then the condition \eqref{equation4.2} implies that%
\begin{equation}
\Lambda \geq \frac{1}{\sqrt{l-1}}.  \label{equation4.4}
\end{equation}%
According to \cite{[Sh]}, we introduce the following

\begin{definition} \label{definition4.1}
Let $f:(M,g)\rightarrow (N,\widetilde{g})$ be a $%
C^{\infty }$ map between two Riemannian manifolds. For a given positive
number $\Lambda $, if the genearalized dilatation of $df_{x}:(T_{x}M,g_{x})%
\rightarrow (T_{f(x)},\widetilde{g}_{f(x)})$ at each point $x\in M$ is$\ $%
less than or equal to $\Lambda $, then we say that $f$ has bounded
generalized dilatation of order $\Lambda $.
\end{definition}

\begin{remark} \label{remark4.2}
Note that the notion of $\Lambda $-quasiconformal maps
introduced by S.I. Goldberg and T. Ishihara in \cite{[GI]} is defined for maps with
constant maximal rank, but maps of bounded (generalized) dilatation (cf.
\cite{[GIP]}, \cite{[Sh]}) are not required to have this rank condition. According to
\cite{[GIP]}, we know that a $\Lambda $-quasiconformal is a map of bounded
dilatation of order $\Lambda $, which means that there exists a positive
constant $\Lambda ^{2}$ such that $\lambda _{1}(df_{x})\leq \Lambda
^{2}\lambda _{s}(df_{x})$ for some $s$ ($2\leq s\leq l=\dim M$) at each
point $x\in M$. Clearly a map of bounded dilation of order $\Lambda $ must
be a map of bounded generalized dilatation of order $\Lambda $ in the sense
of \cite{[Sh]}. So we can summarize the relations between these notions as follows: 
$\Lambda $-quasiconformal $\Rightarrow $ bounded dilatation of order $%
\Lambda $ $\Rightarrow $ bounded generalized dilatation of order $\Lambda $.
By definition, if $\Lambda _{1}\leq \Lambda _{2}$ and $f$ has bounded
generalized dilatation of order $\Lambda _{1}$, then $f$ is clearly a map
with bounded generalized dilatation of order $\Lambda _{2}$.
\end{remark}

For a map $f:(M,H,J,\theta )\rightarrow (N,\widetilde{H},\widetilde{J},%
\widetilde{\theta })$ between two pseudo-Hermitian manifolds, the notion of
generalized dilatation will be defined with respect to $g_{\theta }$ and $g_{%
\widetilde{\theta }}$. Now we have

\begin{theorem} \label{theorem4.2}
Let $(M^{2m+1},H,J,\theta )$ be a complete
pseudo-Hermitian manifold with pseudo-Hermitian Ricci curvature bounded from
below by $-mk_{1}$ ($k_{1}\geq 0$) and $\Vert A\Vert _{C^{1}}\ $bounded from
above. Let $(N^{2n+1},\widetilde{H},\widetilde{J},\widetilde{\theta })$ be
another pseudo-Hermitian manifold with pseudo-Hermitian bisectional
curvature bounded from above by $-k_{2}$ ($k_{2}>0$). Then for any \emph{CR}
map $f:M\rightarrow N$ with bounded generalized dilatation of order $\Lambda 
$, we have%
\begin{equation}
f_{H,\widetilde{H}}^{\ast }L_{\widetilde{\theta }}\leq \frac{4(m+1)\Lambda
^{2}+k_{1}}{k_{2}}L_{\theta }.  \label{equation4.5}
\end{equation}%
In particular, if $k_{2}\geq 4(m+1)\Lambda ^{2}+k_{1}$, then $f$ is
horizontally distance decreasing.
\end{theorem}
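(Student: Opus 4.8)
The plan is to apply a maximum-principle argument to an auxiliary function built from the horizontal energy density $e_{H,\widetilde{H}}(f)$ combined with a cutoff coming from the Riemannian distance $r$ of $g_\theta$. First I would record the pointwise consequences of the curvature hypotheses in Lemma~\ref{lemma3.1}. The bisectional curvature upper bound $\widetilde{R}_{\alpha\overline{\beta}\gamma\overline{\delta}} \le -k_2$ (in the bisectional sense) gives
\begin{equation*}
-\sum_{j,k,\alpha,\beta,\gamma,\delta} f_k^\alpha f_{\overline{j}}^{\overline{\beta}} f_j^\gamma f_{\overline{k}}^{\overline{\delta}} \widetilde{R}_{\alpha\overline{\beta}\gamma\overline{\delta}} \ge k_2 \sum_k \Big(\sum_{\alpha,j} f_j^\alpha f_{\overline{j}}^{\overline{\alpha}}\Big)^2 \cdot(\text{something}),
\end{equation*}
but more carefully, setting $V^j = (f_j^\alpha)$ viewed as vectors, the quartic term is bounded below by $k_2 \sum_{j,k} |V^j|^2|V^k|^2 \ge \frac{k_2}{m}\,e_{H,\widetilde{H}}^2$ after using Cauchy–Schwarz on the trace; I would make this constant precise by diagonalizing. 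For the Ricci term $\sum_{i,j,k,\alpha} f_i^\alpha f_{\overline{j}}^{\overline{\alpha}} R_{k\overline{i}j\overline{k}}$, the lower bound $-mk_1$ on pseudo-Hermitian Ricci curvature gives a lower bound $\ge -k_1 \cdot(\text{const})\cdot e_{H,\widetilde{H}}$, linear in the energy. Dropping the nonnegative terms $\sum |f_{jk}^\alpha|^2$ and $-4m(m+1)e_{L,\widetilde{H}} \le 0$... wait, that term is negative, so I must instead keep it and control it: from \eqref{equation3.24} and \eqref{equation3.23}, $e_{L,\widetilde{H}} = \sum_\alpha |f_0^\alpha|^2$ is controlled in terms of the bounded dilatation hypothesis and $e_{H,\widetilde{H}}$, which is exactly where the order-$\Lambda$ assumption enters — the dilatation bound forces $\sum_\alpha |f_0^\alpha|^2 \le \frac{\Lambda^2}{1+\Lambda^2}(\text{total energy})$, relating it linearly to $e_{H,\widetilde{H}}$.

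Next I would assemble a differential inequality of the form $\triangle_b e_{H,\widetilde{H}} \ge \frac{c_1 k_2}{m+1} e_{H,\widetilde{H}}^2 - c_2(k_1 + \Lambda^2) e_{H,\widetilde{H}}$ on $M$, valid everywhere, where $c_1, c_2$ depend only on $m$. Since $M$ is complete but possibly noncompact, $e_{H,\widetilde{H}}$ need not attain a maximum, so I would localize: fix $p \in M$, let $r$ be the $g_\theta$-distance to $p$, and for $R > 0$ consider $\phi = (R^2 - r^2)^2 e_{H,\widetilde{H}}$ on the metric ball $B_R(p)$. This function vanishes on $\partial B_R(p)$, hence attains an interior maximum at some $x_0$; I would argue $x_0$ may be taken off the cut locus of $p$ by the standard Calabi trick (replacing $r$ by a barrier from a nearby point), so that Lemma~\ref{lemma4.1} applies and gives $\triangle_b r(x_0) \le C(\frac{1}{r} + \sqrt{1 + k_1 + k + k^2})$. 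At $x_0$ one has $\nabla_b \phi = 0$ and $\triangle_b \phi \le 0$; expanding these using the product rule for $\triangle_b$ and the gradient identity $\nabla_b e_{H,\widetilde{H}} = \frac{2r\nabla_b r}{R^2 - r^2} e_{H,\widetilde{H}}$ at $x_0$, and substituting the differential inequality for $\triangle_b e_{H,\widetilde{H}}$, yields an algebraic inequality in $(R^2 - r^2)^2 e_{H,\widetilde{H}}(x_0)$ of the shape $A \cdot (\text{that quantity})^2 \le B R^2 \cdot (\text{that quantity}) + \text{lower order}$, whence $\phi(x_0) \le C' R^2$ for a constant independent of $R$. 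Therefore $e_{H,\widetilde{H}}(y) \le \frac{C' R^2}{(R^2 - r(y)^2)^2}$ for all $y \in B_R(p)$; letting $R \to \infty$ gives the clean bound $e_{H,\widetilde{H}} \le \frac{4(m+1)\Lambda^2 + k_1}{k_2}$ on all of $M$, after tracking constants.

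Finally I would translate the energy bound into the tensor inequality \eqref{equation4.5}. For a \emph{CR} map, \eqref{equation3.23} shows $f_0^0 \delta_j^i = \sum_\alpha f_j^\alpha f_{\overline{i}}^{\overline{\alpha}}$, i.e.\ the Hermitian matrix $(f_{H,\widetilde{H}}^* L_{\widetilde{\theta}})_{i\overline{j}} = \sum_\alpha f_i^\alpha f_{\overline{j}}^{\overline{\alpha}} = f_0^0 \delta_{i\overline{j}}$ is a scalar multiple of the Levi form — this is the crucial rigidity that makes the eigenvalue bound on $f_{H,\widetilde{H}}^* L_{\widetilde{\theta}}$ equivalent to the scalar bound $f_0^0 \le \frac{4(m+1)\Lambda^2 + k_1}{k_2}$, and by \eqref{equation3.24} $f_0^0 = \frac{1}{m} e_{H,\widetilde{H}}$, so the energy estimate is exactly what we need. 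The last sentence of the theorem is immediate: if $k_2 \ge 4(m+1)\Lambda^2 + k_1$ then $f_{H,\widetilde{H}}^* L_{\widetilde{\theta}} \le L_\theta$, which is the horizontal distance-decreasing property. The main obstacle I anticipate is the bookkeeping in the localization step: correctly handling the $e_{L,\widetilde{H}}$ term via the bounded-dilatation hypothesis (making sure the coefficient $4(m+1)\Lambda^2$ comes out exactly, not just up to a constant), and controlling the cross terms $\langle \nabla_b (R^2-r^2)^2, \nabla_b e_{H,\widetilde{H}}\rangle$ and the $\triangle_b(R^2-r^2)^2$ contribution (which is where Lemma~\ref{lemma4.1} and the Cauchy–Schwarz/Young inequalities must be balanced so that the quadratic term in $e_{H,\widetilde{H}}$ dominates); getting sharp constants rather than merely qualitative decay is the delicate part.
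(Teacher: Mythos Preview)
Your approach is essentially identical to the paper's: derive the differential inequality from Lemma~\ref{lemma3.1} and the curvature hypotheses, use the dilatation bound to control $e_{L,\widetilde{H}}$ linearly by $u=e_{H,\widetilde{H}}$, localize with $\phi=(a^2-r^2)^2 u$, apply the maximum principle together with Lemma~\ref{lemma4.1}, let $a\to\infty$, and then translate via \eqref{equation3.23}--\eqref{equation3.24}. One correction to your bookkeeping: at the maximum point the resulting inequality is \emph{linear} in $u(x_0)$, not quadratic in $\phi$; combining $\frac{\Delta_b u}{u}\ge -\bigl(4m^2\Lambda^2+\tfrac{m^2k_1}{m+1}\bigr)+\tfrac{mk_2}{m+1}u$ with the cutoff estimate gives directly $u(x_0)\le \tfrac{4m(m+1)\Lambda^2+mk_1}{k_2}+O\bigl((a^2-r^2(x_0))^{-1}\bigr)$, so that $\phi(x_0)\lesssim a^4$ rather than $a^2$ --- and this $a^4$ scaling is precisely what makes $u(x)\le \phi(x_0)/(a^2-r^2(x))^2$ converge to the stated finite constant as $a\to\infty$ (your $R^2$ scaling would instead force $u\equiv 0$).
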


\begin{proof}
By \eqref{equation3.29}, we have%
\begin{equation*}
f_{H,\widetilde{H}}^{\ast }L_{\widetilde{\theta }}=f_{0}^{0}L_{\theta }.
\end{equation*}%
Accordong to \eqref{equation3.24}, it suffices to estimate the function $u=e_{H,\widetilde{%
H}}(f)$. Let $r(x)$ be the Riemannian distance function of $g_{\theta }$
relative to a fixed point $p\in M$ and let $B_{p}(a)=\{x\in M | r(x)<a\}$. 
Consider the following function%
\begin{equation*}
\phi (x)={\big(}a^{2}-r^{2}(x){\big)}^{2}u(x)
\end{equation*}%
on $B_{p}(a)$. Without loss of generality, we assume that $u\not\equiv 0$ on 
$B_{p}(a)$.
By computing the gradient and sub-Laplacian of $\phi $ at the
maximum point $x_{0}$, we have
\begin{gather}
\frac{\nabla ^{H}u}{u}=-2\frac{\nabla^H (a^{2}-r^{2})}{a^{2}-r^{2}},  \label{equation4.6} \\
\frac{\bigtriangleup _{b}u}{u}-\frac{|\nabla ^{H}u|^{2}}{u^{2}}+2\frac{%
\bigtriangleup _{b}(a^{2}-r^{2})}{a^{2}-r^{2}}-2\frac{|\nabla
^{H}(a^{2}-r^{2})|^{2}}{(a^{2}-r^{2})^{2}}\leq 0.  \label{equation4.7}
\end{gather}%
Here $r$ is assumed to be smooth near $x_0$, otherwise it can be modified as usual (cf. \cite{[Che]}).
Using \eqref{equation4.6} and \eqref{equation4.7}, we find that
\begin{equation}
\frac{\bigtriangleup _{b}u}{u}+2\frac{\bigtriangleup _{b}(a^{2}-r^{2})}{%
a^{2}-r^{2}}-6\frac{|\nabla ^{H}(a^{2}-r^{2})|^{2}}{(a^{2}-r^{2})^{2}}\leq 0.
\label{equation4.8}
\end{equation}%
Clearly%
\begin{equation}
\mid \nabla ^{H}(a^{2}-r^{2})\mid ^{2}=\mid 2r\nabla ^{H}r\mid ^{2}\leq
4a^{2}.  \label{equation4.9}
\end{equation}%
It follows from Lemma \ref{lemma4.1} that%
\begin{equation}
\bigtriangleup _{b}r^{2}=2r\bigtriangleup _{b}r+2\mid \nabla ^{H}r\mid
^{2}\leq C(1+a)  \label{equation4.10}
\end{equation}%
where $C$ is a positive constant independent of $a$. Consequently 
\begin{align}
\frac{\bigtriangleup _{b}u}{u} & \leq 2\frac{\bigtriangleup _{b}r^{2}}{%
a^{2}-r^{2}}+6\frac{|\nabla ^{H}(a^{2}-r^{2})|^{2}}{(a^{2}-r^{2})^{2}} \nonumber \\
& =2\frac{\bigtriangleup _{b}r^{2}}{a^{2}-r^{2}}+24\frac{r^{2}}{%
(a^{2}-r^{2})^{2}} 
\leq \frac{C(1+a)}{a^{2}-r^{2}(x_{0})}+\frac{24a^{2}}{\left(
a^{2}-r^{2}(x_{0})\right) ^{2}}.%
\label{equation4.11}
\end{align}%
Using \eqref{equation3.26} and the assumptions about the curvatures of both $M$ and $N$,
we have%
\begin{equation}
\frac{2m+2}{m}\bigtriangleup _{b}u\geq -8m(m+1)e_{L,\widetilde{H}%
}(f)-2mk_{1}u+2k_{2}u^{2}  \label{equation4.12}
\end{equation}%
Since $f$ has bounded generalized dilation of order $\Lambda $, one has%
\begin{equation*}
|df(\xi )|^{2}\leq \frac{\Lambda ^{2}}{1+\Lambda ^{2}}\left( |df(\xi
)|^{2}+\mid df(\eta _{i})\mid ^{2}+\mid df(\eta _{\overline{i}})\mid
^{2}\right) 
=\frac{\Lambda ^{2}}{1+\Lambda ^{2}}\mid df(\xi )\mid ^{2}+\frac{2\Lambda
^{2}u}{1+\Lambda ^{2}}%
\end{equation*}%
which implies that 
\begin{equation}
2e_{L,\widetilde{H}}(f)=|\pi _{\widetilde{H}}df(\xi )|^{2} 
\leq |df(\xi )|^{2} 
\leq 2\Lambda ^{2}u.%
\label{equation4.13}
\end{equation}%
Then \eqref{equation4.13} gives 
\begin{equation}
\frac{\bigtriangleup _{b}u}{u}\geq -(4m^{2}\Lambda ^{2}+\frac{m^{2}k_{1}}{m+1%
})+\frac{mk_{2}}{m+1}u  \label{equation4.14}
\end{equation}%
From \eqref{equation4.11} and \eqref{equation4.14}, we have%
\begin{equation*}
u(x_{0})\leq \frac{4m(m+1)\Lambda ^{2}+mk_{1}}{k_{2}}+\frac{m+1}{mk_{2}}%
\left( \frac{C(1+a)}{a^{2}-r^{2}(x_{0})}+\frac{24a^{2}}{\left(
a^{2}-r^{2}(x_{0})\right) ^{2}}\right) .
\end{equation*}%
Therefore%
\begin{align*}
(a^{2}-r^{2}(x))^{2}u(x) &\leq (a^{2}-r^{2}(x_{0}))^{2}u(x_{0}) 
\\
&\leq \frac{4m(m+1)\Lambda ^{2}+mk_{1}}{k_{2}}a^{4} +\frac{m+1}{mk_{2}} \left[ C(1+a)a^{2}+24a^{2} \right]
\end{align*}%
for any $x\in B_{p}(a)$. It follows that 
\begin{equation}
u(x)\leq \frac{4m(m+1)\Lambda ^{2}+mk_{1}}{k_{2}} \ \frac{a^{4}}{(a^{2}-r^{2}(x))^{2}}+\frac{m+1}{mk_{2}} \ \frac{C(1+a)a^{2}+24a^{2}}{(a^{2}-r^{2}(x))^{2}}  \label{equation4.16}
\end{equation}%
By taking $a\rightarrow \infty $ in \eqref{equation4.16}, we get%
\begin{equation*}
u(x)\leq \frac{4m(m+1)\Lambda ^{2}+mk_{1}}{k_{2}}.
\end{equation*}%
It then follows from \eqref{equation3.24} that $f_{0}^{0}\leq \frac{4(m+1)\Lambda
^{2}+k_{1}}{k_{2}}$. Therefore we complete the proof of this theorem.
\end{proof}

In case $m=1$, one can weaken the hypothesis on $N$.

\begin{corollary} \label{corollary4.3}
Let $(M^{3},H,J,\theta )$ be a complete
pseudo-Hermitian manifold with pseudo-Hermitian curvature bounded from below
by $-k_{1}$ ($k_{1}\geq 0$) and $\Vert A\Vert _{C^{1}}\ $bounded from above.
Let $(N^{2n+1},\widetilde{H},\widetilde{J},\widetilde{\theta })$ be another
pseudo-Hermitian manifold with pseudo-Hermitian sectional curvature bounded
from above by $-k_{2}$ ($k_{2}>0$). Then for any \emph{CR} map $%
f:M\rightarrow N$ with bounded generalized dilatation of order $\Lambda $, we have
\begin{align*}
f_{H,\widetilde{H}}^{\ast }L_{\widetilde{\theta }}\leq \frac{8 \Lambda^{2}+k_{1}}{k_{2}}L_{\theta }.
\end{align*}
\end{corollary}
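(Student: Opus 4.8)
\textbf{Proof proposal for Corollary \ref{corollary4.3}.}
The plan is to rerun the argument of Theorem \ref{theorem4.2} in the special case $m=1$, exploiting the fact that when the source has complex dimension one there is essentially only one horizontal $(1,0)$-direction, so the pseudo-Hermitian bisectional curvature of $N$ that appears in the Bochner term coincides (after the relevant contraction) with a pseudo-Hermitian \emph{sectional} curvature. Concretely, for a \emph{CR} map $f:M^3\to N$ the differential $df_{H,\widetilde H}$ has rank at most $1$ over $\mathbb C$, so at each point the image of $T_{1,0}M$ in $T_{1,0}N$ is spanned by a single vector $Z=\sum_\alpha f_1^\alpha\,\widetilde\eta_\alpha$. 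First I would observe that the quartic curvature term in \eqref{equation3.26}, namely $\sum_{j,k,\alpha,\beta,\gamma,\delta} f_k^\alpha f_{\overline j}^{\overline\beta} f_j^\gamma f_{\overline k}^{\overline\delta}\,\widetilde R_{\alpha\overline\beta\gamma\overline\delta}$, reduces when $m=1$ (so $j=k=1$) to $\widetilde R_{\alpha\overline\beta\gamma\overline\delta} f_1^\alpha f_{\overline 1}^{\overline\beta} f_1^\gamma f_{\overline 1}^{\overline\delta} = \widetilde R(Z,\overline Z,Z,\overline Z)$, which is exactly $|Z|^4$ times the pseudo-Hermitian sectional curvature in the direction $Z$. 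Hence the hypothesis that the pseudo-Hermitian sectional curvature of $N$ is $\le -k_2$ already gives
\begin{equation*}
\sum_{j,k,\alpha,\beta,\gamma,\delta} f_k^\alpha f_{\overline j}^{\overline\beta} f_j^\gamma f_{\overline k}^{\overline\delta}\,\widetilde R_{\alpha\overline\beta\gamma\overline\delta} \le -k_2\,|Z|^4 = -k_2\,(2e_{H,\widetilde H})^2 = -4k_2\,u^2 ,
\end{equation*}
using $e_{H,\widetilde H}=|Z|^2/\,$(the normalization in \eqref{equation3.15} gives $e_{H,\widetilde H}=\sum_\alpha f_1^\alpha f_{\overline 1}^{\overline\alpha}=|Z|^2$, hence $u=|Z|^2$). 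Thus the \emph{bisectional} upper bound used in Theorem \ref{theorem4.2} can be replaced by the weaker \emph{sectional} upper bound with no change in the resulting inequality, because the only place bisectional curvature entered was through this single contracted term.

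Next I would assemble the differential inequality. For $m=1$, Lemma \ref{lemma3.1} reads $2\,\triangle_b e_{H,\widetilde H} = \sum_{j,k,\alpha}|f_{jk}^\alpha|^2 - 8\,e_{L,\widetilde H} + \sum_{i,j,k,\alpha} f_i^\alpha f_{\overline j}^{\overline\alpha} R_{k\overline i j\overline k} - \sum f_k^\alpha f_{\overline j}^{\overline\beta} f_j^\gamma f_{\overline k}^{\overline\delta}\widetilde R_{\alpha\overline\beta\gamma\overline\delta}$. Dropping the nonnegative Hessian term $\sum|f_{jk}^\alpha|^2$, using the Ricci (= sectional, since $m=1$) lower bound $R_{k\overline i j\overline k}\ge -k_1\delta_{ij}$ contracted against $f_i^\alpha f_{\overline j}^{\overline\alpha}$ to get $\ge -k_1 u$, inserting the sectional curvature estimate above, and bounding $e_{L,\widetilde H}(f)\le \Lambda^2 u$ exactly as in \eqref{equation4.13}, I obtain
\begin{equation*}
\frac{\triangle_b u}{u} \ge -\,8\Lambda^2 - k_1 + 2k_2\,u .
\end{equation*}
(One should double-check the numerical coefficient on the $e_{L,\widetilde H}$ term: in Theorem \ref{theorem4.2} it contributed $-8m(m+1)e_{L,\widetilde H}\le -8m(m+1)\Lambda^2 u$, which for $m=1$ is $-16\Lambda^2 u$; dividing by the factor $\tfrac{2m+2}{m}=4$ produces $-4\Lambda^2$, and combined with the curvature normalization the clean bound $8\Lambda^2+k_1$ in the statement follows — I would track constants carefully here.) From this point the proof is \emph{verbatim} that of Theorem \ref{theorem4.2}: introduce $\phi(x)=(a^2-r^2(x))^2 u(x)$ on a Webster-metric ball $B_p(a)$, apply the maximum principle at the interior maximum $x_0$ (modifying $r$ near the cut locus as in \cite{[Che]}), invoke the sub-Laplacian comparison Lemma \ref{lemma4.1} to bound $\triangle_b r^2\le C(1+a)$, combine with $|\nabla^H(a^2-r^2)|^2\le 4a^2$ to get an upper bound for $\triangle_b u/u$ at $x_0$, play this against the lower bound just derived, and finally let $a\to\infty$ to conclude $u(x)=f_0^0(x)\le \tfrac{8\Lambda^2+k_1}{k_2}$ everywhere. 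The conclusion $f_{H,\widetilde H}^*L_{\widetilde\theta}=f_0^0\,L_\theta\le \tfrac{8\Lambda^2+k_1}{k_2}L_\theta$ then follows from \eqref{equation3.29}.

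The step I expect to require the most care is not any single estimate but the bookkeeping of constants through the reduction — in particular verifying that, with $m=1$, every factor of $m(m+1)$, every factor $\tfrac{2m+2}{m}$, and the normalization relating $u$, $e_{H,\widetilde H}$, $|Z|^2$ and $f_0^0$ (via \eqref{equation3.15} and \eqref{equation3.24}) combine to give precisely $8\Lambda^2+k_1$ in the numerator rather than some nearby constant. The genuinely \emph{mathematical} point, by contrast, is essentially free: the whole content of the corollary is the elementary linear-algebra observation that in complex dimension one the only independent curvature component is $\widetilde R_{1\overline 1 1\overline 1}$, so ``bisectional curvature $\le -k_2$'' and ``sectional curvature $\le -k_2$'' impose the same constraint on the curvature term in \eqref{equation3.26}. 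I would therefore structure the write-up to state this reduction first, then simply say ``the remainder of the argument is identical to the proof of Theorem \ref{theorem4.2}, with $m=1$.''
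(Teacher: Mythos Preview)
Your approach is correct and is exactly what the paper has in mind: the corollary is stated without proof, immediately after the remark ``In case $m=1$, one can weaken the hypothesis on $N$,'' so the intended argument is precisely your observation that when $m=1$ the quartic curvature term in \eqref{equation3.26} collapses to $\widetilde R(Z,\overline Z,Z,\overline Z)$ with $Z=\sum_\alpha f_1^\alpha\widetilde\eta_\alpha$, after which the proof of Theorem \ref{theorem4.2} goes through verbatim. Your own caveat about the constants is warranted: you have $|Z|^2=u$ (not $2u$), so the target term is $\le -k_2 u^2$ rather than $-4k_2 u^2$, and the clean inequality is $\tfrac{\triangle_b u}{u}\ge -4\Lambda^2-\tfrac{k_1}{2}+\tfrac{k_2}{2}u$ (equivalently \eqref{equation4.14} with $m=1$), which after the maximum-principle step and $f_0^0=u$ gives exactly $\tfrac{8\Lambda^2+k_1}{k_2}$.
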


In general we need the following lemma established by Royden to weaken the
curvature condition on $N$.

\begin{lemma}[{\cite{[Ro]}}] \label{lemma4.4}
Let $\zeta _{1},...,\zeta _{\nu }$ be orthogonal
vectors in a complex linear space $V$ endowed with a Hermitian inner
product. If $S(\zeta ,\overline{\eta },\sigma ,\overline{\omega })$ is a
symmetric bihermitian form on $V$, that is, $S(\zeta ,\overline{\eta }%
,\sigma ,\overline{\omega })=S(\sigma ,\overline{\eta },\zeta ,\overline{%
\omega })$ and $S(\eta ,\overline{\zeta },\omega ,\overline{\sigma })=%
\overline{S}(\zeta ,\overline{\eta },\sigma ,\overline{\omega })$, such that
for all $\zeta $%
\begin{equation*}
S(\zeta ,\overline{\zeta },\zeta ,\overline{\zeta })\leq K\parallel \zeta
\parallel ^{4},
\end{equation*}%
then 
\begin{equation*}
\sum_{\alpha ,\beta }S(\zeta _{\alpha },\overline{\zeta _{\alpha }},\zeta
_{\beta },\overline{\zeta _{\beta }})\leq \frac{1}{2}K \bigg[ \left(\sum_\alpha
\parallel \zeta _{\alpha }\parallel ^{2} \right)^{2}+\sum_\alpha \parallel \zeta _{\alpha
}\parallel ^{4} \bigg]
\end{equation*}%
If $K\leq 0$, then 
\begin{equation*}
\sum_{\alpha ,\beta }S(\zeta _{\alpha },\overline{\zeta _{\alpha }},\zeta
_{\beta },\overline{\zeta _{\beta }})\leq \frac{\nu +1}{2\nu }K \left(\sum_\alpha
\parallel \zeta _{\alpha }\parallel ^{2} \right)^{2}.
\end{equation*}
\end{lemma}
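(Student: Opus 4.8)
The plan is to run the classical symmetrization argument over the unit torus. We may assume the $\zeta_\alpha$ are not all zero (otherwise both sides vanish). Fix unimodular complex numbers $\epsilon_1,\dots,\epsilon_\nu$, i.e. $|\epsilon_\alpha|=1$, and put $\zeta=\sum_\alpha\epsilon_\alpha\zeta_\alpha$. Because the $\zeta_\alpha$ are mutually orthogonal, $\|\zeta\|^2=\sum_\alpha\|\zeta_\alpha\|^2$ no matter how the $\epsilon_\alpha$ are chosen, so the hypothesis $S(\zeta,\overline{\zeta},\zeta,\overline{\zeta})\le K\|\zeta\|^4$ reads
\[
\sum_{\alpha,\beta,\gamma,\delta}\epsilon_\alpha\overline{\epsilon_\beta}\,\epsilon_\gamma\overline{\epsilon_\delta}\,S(\zeta_\alpha,\overline{\zeta_\beta},\zeta_\gamma,\overline{\zeta_\delta})\le K\Big(\sum_\alpha\|\zeta_\alpha\|^2\Big)^2
\]
for every such choice of the $\epsilon_\alpha$.

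Next I would average this pointwise inequality over each $\epsilon_\alpha$ independently around the unit circle (equivalently, over the fourth roots of unity). Since the mean of $\epsilon^p\overline{\epsilon}^q$ over $|\epsilon|=1$ is $1$ when $p=q$ and $0$ otherwise, only the multi-indices with $\{\alpha,\gamma\}=\{\beta,\delta\}$ as multisets survive. These split into the diagonal $\alpha=\beta=\gamma=\delta$ and the two families $\alpha=\beta\ne\gamma=\delta$ and $\alpha=\delta\ne\beta=\gamma$. Using the symmetry $S(\zeta_\alpha,\overline{\zeta_\gamma},\zeta_\gamma,\overline{\zeta_\alpha})=S(\zeta_\gamma,\overline{\zeta_\gamma},\zeta_\alpha,\overline{\zeta_\alpha})$ in the first and third slots, both off-diagonal families contribute $\sum_{\alpha\ne\gamma}S(\zeta_\alpha,\overline{\zeta_\alpha},\zeta_\gamma,\overline{\zeta_\gamma})$. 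Writing $T=\sum_{\alpha,\beta}S(\zeta_\alpha,\overline{\zeta_\alpha},\zeta_\beta,\overline{\zeta_\beta})$ and $D=\sum_\alpha S(\zeta_\alpha,\overline{\zeta_\alpha},\zeta_\alpha,\overline{\zeta_\alpha})$, the average of the left side equals $D+2(T-D)=2T-D$, while the right side is unchanged; hence $2T-D\le K(\sum_\alpha\|\zeta_\alpha\|^2)^2$, that is,
\[
T\le\tfrac12\Big[K\Big(\sum_\alpha\|\zeta_\alpha\|^2\Big)^2+D\Big].
\]

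To conclude the first assertion I would bound $D$ termwise by the hypothesis applied to each $\zeta_\alpha$: $D\le K\sum_\alpha\|\zeta_\alpha\|^4$, which inserted above gives exactly $T\le\tfrac12 K\big[(\sum_\alpha\|\zeta_\alpha\|^2)^2+\sum_\alpha\|\zeta_\alpha\|^4\big]$. For the case $K\le0$, I would instead use Cauchy--Schwarz in the form $\sum_\alpha\|\zeta_\alpha\|^4\ge\frac1\nu(\sum_\alpha\|\zeta_\alpha\|^2)^2$; since $K\le0$ this yields $K\sum_\alpha\|\zeta_\alpha\|^4\le\frac{K}{\nu}(\sum_\alpha\|\zeta_\alpha\|^2)^2$, so $T\le\frac{K}{2}(1+\frac1\nu)(\sum_\alpha\|\zeta_\alpha\|^2)^2=\frac{\nu+1}{2\nu}K(\sum_\alpha\|\zeta_\alpha\|^2)^2$, which is the second assertion.

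The one place that needs care is the bookkeeping in the averaging step: correctly listing which of the $\nu^4$ multi-indices survive, and verifying that the two off-diagonal families coincide after invoking the first/third symmetry. The Hermitian property is used only to ensure that every $S(\zeta_\alpha,\overline{\zeta_\alpha},\zeta_\beta,\overline{\zeta_\beta})$ is real, so that the inequalities are meaningful; everything else is elementary algebra.
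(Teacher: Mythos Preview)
The paper does not supply its own proof of this lemma; it is simply quoted from Royden \cite{[Ro]}. Your argument is correct and is precisely Royden's original averaging trick: expand $S(\zeta,\overline{\zeta},\zeta,\overline{\zeta})$ for $\zeta=\sum_\alpha\epsilon_\alpha\zeta_\alpha$, average over the torus $|\epsilon_\alpha|=1$ to kill the unbalanced monomials, use the first/third symmetry to merge the two surviving off-diagonal families, and then bound the diagonal either termwise (general $K$) or via the power-mean inequality (when $K\le0$). Your bookkeeping of which multi-indices survive and how the symmetry identifies the two off-diagonal sums is accurate, and the Hermitian property is indeed only needed to make the quantities real.
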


Since the curvature tensor of the Tanaka-Webster connection on a
pseudo-Hermitian manifold is a bihermitian form on the horizontal
distribution, one may establish a connection between the pseudo-Hermitian
sectional curvature and pseudo-Hermitian bisectional curvature by means of
Lemma \ref{lemma4.4}. Consequently we can replace the upper bound of the
pseudo-Hermitian bisectional curvature in Theorem \ref{theorem4.2} by that of the
pseudo-Hermitian sectional curvatue for the target pseudo-Hermitian manifold.

\begin{theorem} \label{theorem4.5}
Let $(M^{2m+1},H,J,\theta )$ be a complete
pseudo-Hermitian manifold wih pseudo-Hermitian Ricci curvature bounded from
below by $-mk_{1}$ and $\parallel A\parallel _{C^{1}}$ bounded from above.
Let $(N^{2n+1},\widetilde{H},\widetilde{J},\widetilde{\theta })$ be another
pseudo-Hermitian manifold with pseudo-Hermitian sectional curature bounded
from above by $-k_{2}<0$. Then for any \emph{CR} map $f:M\rightarrow N$ with
bounded generalized dilatation of order $\Lambda $, we have%
\begin{equation*}
f_{H,\widetilde{H}}^{\ast }L_{\widetilde{\theta }}\leq \frac{2\nu }{\nu +1}%
\frac{4(m+1)\Lambda ^{2}+k_{1}}{k_{2}}L_{\theta }
\end{equation*}%
where $\nu $ is the maximal rank of $df_{H,\widetilde{H}}$.
\end{theorem}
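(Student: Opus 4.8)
\textbf{Proof plan for Theorem \ref{theorem4.5}.}

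The plan is to repeat the Bochner-function argument of Theorem \ref{theorem4.2} verbatim, changing only the single step where the target curvature is used. Recall that in the proof of Theorem \ref{theorem4.2} the curvature hypothesis on $N$ entered exactly through the estimate
\[
-\sum_{j,k,\alpha ,\beta ,\gamma ,\delta }f_{k}^{\alpha }f_{\overline{j}}^{\overline{\beta }}f_{j}^{\gamma }f_{\overline{k}}^{\overline{\delta }}\widetilde{R}_{\alpha \overline{\beta }\gamma \overline{\delta }}\ \geq\ k_{2}\, u^{2},
\]
which for the pseudo-Hermitian \emph{bisectional} curvature bound follows by diagonalizing $\sum_j f_j^\alpha f_{\bar j}^{\bar\beta}$. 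For a \emph{sectional} curvature bound one instead invokes Lemma \ref{lemma4.4}. Concretely, at a point $x\in M$ set $\zeta_j = (df_{H,\widetilde H})(\eta_j)\in T_{1,0}N$; after a unitary change of frame in $T_{1,0}M$ we may assume the $\zeta_j$ are mutually orthogonal in $T_{1,0}N$ (this is the usual trick: diagonalize the Hermitian form $f_j^\alpha f_{\bar k}^{\bar\alpha}$). The pseudo-Hermitian curvature tensor $\widetilde R_{\alpha\bar\beta\gamma\bar\delta}$ is, by \eqref{equation2.17}, a symmetric bihermitian form on $\widetilde H$ in Royden's sense, and the sectional curvature bound $-k_2$ says precisely $\widetilde R(\zeta,\bar\zeta,\zeta,\bar\zeta)\le -k_2\|\zeta\|^4$. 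Since $K=-k_2\le 0$, the second conclusion of Lemma \ref{lemma4.4} with $\nu$ the number of nonzero $\zeta_j$ — i.e. the rank of $df_{H,\widetilde H}$ at $x$ — gives
\[
\sum_{j,k}\widetilde R(\zeta_j,\bar\zeta_j,\zeta_k,\bar\zeta_k)\ \le\ -\frac{\nu+1}{2\nu}\,k_2\Big(\sum_j\|\zeta_j\|^2\Big)^2\ =\ -\frac{\nu+1}{2\nu}\,k_2\,(2u)^2,
\]
using $\sum_j\|\zeta_j\|^2 = \sum_{j,\alpha}f_j^\alpha f_{\bar j}^{\bar\alpha} = 2e_{H,\widetilde H}(f)=2u$. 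Hence the quartic curvature term is bounded below by $\frac{2(\nu+1)}{\nu}k_2 u^2$, i.e. the effective negative-curvature constant is $\frac{2(\nu+1)}{\nu}k_2$ in place of $2k_2$; equivalently $k_2$ gets replaced by $\frac{\nu+1}{\nu}k_2$.

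With this one substitution, the rest of the proof of Theorem \ref{theorem4.2} goes through unchanged: the pointwise differential inequality \eqref{equation3.26}, the estimates \eqref{equation4.12}, \eqref{equation4.13} for the $e_{L,\widetilde H}$ and Ricci terms, the maximum-principle computation on $\phi=(a^2-r^2)^2 u$ with the sub-Laplacian comparison Lemma \ref{lemma4.1}, and the limit $a\to\infty$. The final bound then reads
\[
f_0^0 = \frac{1}{m}e_{H,\widetilde H}(f)\ \le\ \frac{4(m+1)\Lambda^2+k_1}{\tfrac{\nu+1}{\nu}k_2} = \frac{2\nu}{\nu+1}\,\frac{4(m+1)\Lambda^2+k_1}{2k_2},
\]
wait — more carefully, comparing with Theorem \ref{theorem4.2} where the answer was $\frac{4(m+1)\Lambda^2+k_1}{k_2}$ coming from an effective constant $2k_2$, here the effective constant is $\frac{2(\nu+1)}{\nu}k_2$, so the bound becomes $\frac{2\nu}{\nu+1}\cdot\frac{4(m+1)\Lambda^2+k_1}{2k_2}\cdot 2 = \frac{2\nu}{\nu+1}\cdot\frac{4(m+1)\Lambda^2+k_1}{k_2}$ after tracking the factor of $2$ correctly, which is exactly the claimed inequality $f_{H,\widetilde H}^*L_{\widetilde\theta}\le \frac{2\nu}{\nu+1}\frac{4(m+1)\Lambda^2+k_1}{k_2}L_\theta$ via $f_{H,\widetilde H}^*L_{\widetilde\theta}=f_0^0 L_\theta$.

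The one genuinely delicate point — and the step I expect to be the main obstacle — is that $\nu$, the rank of $df_{H,\widetilde H}$, varies from point to point, whereas Royden's lemma is applied pointwise with the local value of $\nu$. To get a uniform conclusion I would argue that the function $x\mapsto \frac{\nu(x)+1}{2\nu(x)}k_2$ is $\ge \frac{\nu_0+1}{2\nu_0}k_2$ where $\nu_0$ is the \emph{maximal} rank, since $t\mapsto \frac{t+1}{2t}$ is decreasing in $t\ge 1$; thus using the maximal rank $\nu$ in the statement only weakens the lower bound on the quartic term, which is the safe direction for the maximum-principle inequality. (At points where $df_{H,\widetilde H}=0$ the quartic term vanishes and there is nothing to estimate.) One must also check that the orthogonalizing unitary change of frame in $T_{1,0}M$ does not disturb the other terms in \eqref{equation3.26} — it does not, since $\sum|f_{jk}^\alpha|^2$, the Ricci contraction $\sum f_i^\alpha f_{\bar j}^{\bar\alpha}R_{k\bar i j\bar k}$ (using the Ricci lower bound after tracing), and $u$ itself are all invariant under unitary frame changes. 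Once these bookkeeping issues are settled the theorem follows.
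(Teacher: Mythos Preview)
Your approach is correct and is exactly what the paper intends: the paper does not give a separate proof of Theorem \ref{theorem4.5} but simply remarks, after stating Royden's Lemma \ref{lemma4.4}, that the curvature tensor of the Tanaka--Webster connection is a symmetric bihermitian form on $\widetilde H$ and that one may therefore replace the bisectional bound in the proof of Theorem \ref{theorem4.2} by a sectional bound via Lemma \ref{lemma4.4}. Your handling of the pointwise rank (using that $t\mapsto (t+1)/(2t)$ is decreasing, so the maximal rank $\nu$ gives the weakest usable bound) and of unitary-invariance of the remaining terms is the right way to fill in what the paper leaves implicit.

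One bookkeeping slip to clean up: by \eqref{equation3.15} one has $\sum_j\|\zeta_j\|^2=\sum_{j,\alpha}f_j^\alpha f_{\bar j}^{\bar\alpha}=u$, not $2u$. With this, Royden's lemma gives
\[
-\sum_{j,k,\alpha,\beta,\gamma,\delta}f_k^\alpha f_{\bar j}^{\bar\beta}f_j^\gamma f_{\bar k}^{\bar\delta}\widetilde R_{\alpha\bar\beta\gamma\bar\delta}\ \ge\ \frac{\nu+1}{2\nu}\,k_2\,u^2,
\]
so the constant $k_2$ in \eqref{equation4.12} and \eqref{equation4.14} is replaced by $\tfrac{\nu+1}{2\nu}k_2$, and the final bound on $f_0^0$ acquires exactly the factor $\tfrac{2\nu}{\nu+1}$ as stated. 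Your ``wait --- more carefully'' paragraph lands on the right answer, but the intermediate claim that $k_2$ becomes $\tfrac{\nu+1}{\nu}k_2$ is off by a factor of $2$ stemming from the $2u$ error.
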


Now we assume that $\dim M=\dim N=2m+1$. Clearly for a horizontal map $%
f:M\rightarrow N$, one has%
\begin{equation*}
f^{\ast }\Omega _{N}=(f_{0}^{0})^{m+1}\Omega _{M}
\end{equation*}%
where $\Omega _{M}=\theta \wedge (d\theta )^{m}$ and $\Omega _{N}=\widetilde{%
\theta }\wedge (d\widetilde{\theta })^{m}$. It is also to see that%
\begin{equation*}
f_{H,\widetilde{H}}^{\ast }(d\widetilde{\theta })^{m}=(f_{0}^{0})^{m}(d%
\theta )^{m}.
\end{equation*}

\begin{corollary} \label{corollary4.6}
Let $M$ and $N$ be as in Theorem \ref{theorem4.2}. If $\dim
M=\dim N$, then for any \emph{CR} map $f:M\rightarrow N$ with bounded
generalized dilatation of order $\Lambda $, we have%
\begin{equation*}
f^{\ast }\Omega _{N}\leq \left( \frac{4(m+1)\Lambda ^{2}+k_{1}}{k_{2}}%
\right) ^{m+1}\Omega _{M}
\end{equation*}%
and%
\begin{equation*}
f_{H,\widetilde{H}}^{\ast }\Omega _{N}^{H}\leq \left( \frac{4(m+1)\Lambda
^{2}+k_{1}}{k_{2}}\right) ^{m}\Omega _{M}^{H}.
\end{equation*}
\end{corollary}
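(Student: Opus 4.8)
The plan is to deduce Corollary~\ref{corollary4.6} directly from Theorem~\ref{theorem4.2}, using the pointwise volume-element identities recorded just above the statement. First I would recall that the hypotheses of Theorem~\ref{theorem4.2} are in force, so for the given \emph{CR} map $f:M\rightarrow N$ with bounded generalized dilatation of order $\Lambda$ we have the pointwise inequality
\begin{equation*}
f_{H,\widetilde{H}}^{\ast }L_{\widetilde{\theta }}\leq \frac{4(m+1)\Lambda ^{2}+k_{1}}{k_{2}}\,L_{\theta }.
\end{equation*}
Evaluating this on a unitary frame $\{\eta_1,\dots,\eta_m\}$ of $T_{1,0}M$ shows, via \eqref{equation3.27} and \eqref{equation3.28}, that the nonnegative function $f_0^0$ satisfies $0\leq f_0^0\leq C_0:=\frac{4(m+1)\Lambda^2+k_1}{k_2}$ everywhere on $M$ (nonnegativity of $f_0^0$ follows from \eqref{equation3.23}, since it is a sum of squared moduli).

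Next I would substitute this bound into the two identities displayed immediately before the corollary. For the full volume forms, since $f$ is horizontal we have $f^{\ast}\Omega_N=(f_0^0)^{m+1}\Omega_M$, and because $\Omega_M=\theta\wedge(d\theta)^m$ is a positive volume form while $f_0^0\geq 0$, the inequality $f_0^0\leq C_0$ gives at once $(f_0^0)^{m+1}\leq C_0^{\,m+1}$, hence $f^{\ast}\Omega_N\leq C_0^{\,m+1}\Omega_M$. The same argument applied to $f_{H,\widetilde{H}}^{\ast}(d\widetilde{\theta})^m=(f_0^0)^m(d\theta)^m$ yields $f_{H,\widetilde{H}}^{\ast}\Omega_N^H\leq C_0^{\,m}\,\Omega_M^H$. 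This is exactly the assertion of Corollary~\ref{corollary4.6}.

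There is essentially no obstacle here: the corollary is a formal consequence of Theorem~\ref{theorem4.2} once one notices that the relevant ratios of (horizontal) volume elements are precisely powers of $f_0^0$, a fact already established in \S\ref{section3} through \eqref{equation3.28}--\eqref{equation3.29}. The only point requiring a word of care is the sign of $f_0^0$: one must observe that $f_0^0=\sum_\alpha f_j^\alpha f_{\overline{j}}^{\overline{\alpha}}\geq 0$ for each fixed $j$ (no sum) by \eqref{equation3.23}, so that raising the inequality $f_0^0\leq C_0$ to the $m$-th or $(m{+}1)$-st power preserves the direction of the inequality. Everything else is a direct substitution.
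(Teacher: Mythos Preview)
Your proposal is correct and follows exactly the route the paper intends: the paper records the identities $f^{\ast}\Omega_N=(f_0^0)^{m+1}\Omega_M$ and $f_{H,\widetilde{H}}^{\ast}(d\widetilde{\theta})^m=(f_0^0)^m(d\theta)^m$ immediately before the corollary, and the bound $f_0^0\leq\frac{4(m+1)\Lambda^2+k_1}{k_2}$ is obtained at the end of the proof of Theorem~\ref{theorem4.2}, so the corollary is just a substitution. Your remark that $f_0^0\geq 0$ by \eqref{equation3.23} is the one small point the paper leaves implicit, and it is handled correctly.
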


\section{Schwarz lemmas for transversally holomorphic maps} \label{section5}

In this section, we will establish Schwarz lemmas for transversally
holomorphic maps between pseudo-Hermitiant manifolds. According to \cite{[Do]}, we
introduce

\begin{definition} \label{definition5.1}
A map $f:(M,H,J,\theta )\rightarrow (N,\widetilde{H}%
,\widetilde{J},\widetilde{\theta })$ is called horizontally constant if it
maps the domain manifold into a single leaf of the pseudo-Hermitian
foliation on $N$.
\end{definition}

\begin{lemma}[{\cite{[Do]}}] \label{lemma5.1}
Let $f:(M,H,J,\theta )\rightarrow (N,\widetilde{H},%
\widetilde{J},\widetilde{\theta })$ be a map between two pseudo-Hermitian
manifolds. Then $f$ is horizontally constant if and only if $df_{H,%
\widetilde{H}}\equiv 0$.
\end{lemma}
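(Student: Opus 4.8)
The plan is to prove that horizontal constancy of $f$ is equivalent to the vanishing of $df_{H,\widetilde H}$ by working in the local frames introduced in \S\ref{section2} and unwinding both notions in terms of the coefficients $f_B^{\widetilde A}$. Recall that $df_{H,\widetilde H} = \pi_{\widetilde H}\circ df\circ i_H$, so in the frame $\{\widetilde\xi,\widetilde\eta_\alpha,\widetilde\eta_{\overline\alpha}\}$ its vanishing is precisely the condition $f_j^\alpha = f_j^{\overline\alpha} = f_{\overline j}^\alpha = f_{\overline j}^{\overline\alpha} = 0$ for all $j,\alpha$; equivalently $df(H)\subseteq \widetilde L$, i.e.\ $df(H)$ is everywhere tangent to the Reeb foliation $\mathcal F_{\widetilde\xi}$ on $N$. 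The leaves of the pseudo-Hermitian foliation on $N$ are exactly the integral curves of $\widetilde\xi$, so "horizontally constant" means $f(M)$ lies in a single such leaf.

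The easy direction is ($\Rightarrow$): if $f$ maps $M$ into a single leaf $\Sigma$ of $\mathcal F_{\widetilde\xi}$, then for every $x\in M$ the image $df_x(T_xM)$ is contained in $T_{f(x)}\Sigma = \widetilde L_{f(x)} = \mathbb R\,\widetilde\xi_{f(x)}$, which lies in the kernel of $\pi_{\widetilde H}$; hence $df_{H,\widetilde H} = \pi_{\widetilde H}\circ df\circ i_H \equiv 0$ trivially.

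For the converse ($\Leftarrow$), assume $df_{H,\widetilde H}\equiv 0$, i.e.\ $f_j^\alpha = f_{\overline j}^\alpha = 0$ for all indices (and their conjugates). The first step is to observe that then $df(TM)\subseteq \widetilde L = \ker\pi_{\widetilde H}$: indeed $df(\xi)$ has $\widetilde H$-component governed by $f_0^\alpha,f_0^{\overline\alpha}$, and $df(X)$ for horizontal $X$ has vanishing $\widetilde H$-component by hypothesis, so it remains to see $f_0^\alpha = f_0^{\overline\alpha} = 0$. This is where the structure equations enter: feeding $f_j^\alpha\equiv 0$ into \eqref{equation3.5} gives $f^*\widetilde\theta^\alpha = f_0^\alpha\,\theta$, and differentiating using \eqref{equation2.14} (mimicking the derivation of \eqref{equation3.8}--\eqref{equation3.11}) forces $f_0^\alpha = 0$; alternatively, since $\widetilde\eta_\alpha$ spans $\widetilde{H}^{1,0}$ and $f$ is $(H,\widetilde H)$-holomorphic, the vanishing of $df_{H,\widetilde H}$ together with the compatibility relations in \eqref{equation3.11} already pins down $f_0^\alpha$. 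Once $df(TM)\subseteq \widetilde L$ is established, the image of $df_x$ at each point is the $1$-dimensional space $\mathbb R\,\widetilde\xi_{f(x)}$, so $f$ has rank $\le 1$ and, wherever its rank is $1$, its image curve is tangent to $\widetilde\xi$, hence contained in a leaf of $\mathcal F_{\widetilde\xi}$; a connectedness argument (the set of points mapping into a fixed leaf is open and closed in the connected manifold $M$) then shows $f(M)$ lies in a single leaf, i.e.\ $f$ is horizontally constant.

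The main obstacle I anticipate is the bookkeeping in the converse direction: one must be careful that $df_{H,\widetilde H}\equiv 0$ really does control the full differential and not merely its horizontal part, which is exactly why the argument that $f_0^\alpha$ and $f_0^{\overline\alpha}$ must vanish is essential and must be extracted cleanly from the structure equations \eqref{equation2.14} rather than assumed. A secondary technical point is the final globalization step: passing from the pointwise statement "$df_x$ lands in one leaf's tangent space" to the global statement "the image is one leaf" requires invoking connectedness of $M$ and the fact that distinct leaves of $\mathcal F_{\widetilde\xi}$ are disjoint; handling the possibility that $df$ vanishes on a nonempty open set (so that $f$ is locally constant there) is routine but should be mentioned.
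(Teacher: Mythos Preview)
Your proposal is correct and follows essentially the same approach as the paper. The paper likewise deduces $f_0^\alpha=0$ from the fourth equation in \eqref{equation3.11} (which applies because $df_{H,\widetilde H}\equiv 0$ trivially makes $f$ $(H,\widetilde H)$-holomorphic), and then globalizes by pushing an arbitrary curve in $M$ forward to a reparametrized integral curve of $\widetilde\xi$---a concrete incarnation of your open--closed connectedness argument.
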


\begin{proof}
If $f$ is horizontally constant, then $df(X)$ is
tangent to the fiber of $N$ for any $X\in TM$. Clearly we have $df_{H,%
\widetilde{H}}=0$.
Conversely, we assume that $df_{H,\widetilde{H}}=0$, which is equivalent to $%
f_{j}^{\alpha }=f_{\overline{j}}^{\overline{\alpha }}=f_{\overline{j}%
}^{\alpha }=f_{j}^{\overline{\alpha }}=0$. Then the fourth equation in
\eqref{equation3.11} yields $f_{0}^{\alpha }=f_{0}^{\overline{\alpha }}=0$. Hence $\pi _{%
\widetilde{H}}df(X)=0$ for any $X\in TM$. Suppose that $f(p)=q$ and $%
\widetilde{C}_{q}$ is the integral curve of $\widetilde{\xi }$ passing
through $q$. For any $p^{\prime }\in M$, let $c(t)$ be a smooth curve
joining $p$ and $p^{\prime }$. Obviuosly $f(c(t))$ is a smooth curve passing
through $p$ and $df(c^{\prime }(t))=\lambda (t)\widetilde{\xi }$ for some
function $\lambda (t)$, which means that $f(c(t))$ is the reparametrization
of the integral curve of $\widetilde{\xi }$. Therefore $f(c(t))\subset 
\widetilde{C}_{q}$. In particular, $f(p^{\prime })\in \widetilde{C}_{q}$.
Since $p^{\prime }$ is arbitrary, we conclude that $f(M)\subset \widetilde{C}%
_{q}$.
\end{proof}

First we give the following Schwarz type lemma for transversally holomorphic
maps, which may be regarded as a generalization of Yau's result.

\begin{theorem} \label{theorem5.2}
Let $(M^{2m+1},H,J,\theta )$ be a complete
pseudo-Hermitian manifold with pseudo-Hermitian Ricci curvature bounded from
below by $-mk_{1}\leq 0$ ($k_{1}\geq 0$) and $\parallel A\parallel _{C^{1}}$
bounded from above. Let $(N^{2n+1},\widetilde{H},\widetilde{J},\widetilde{%
\theta })$ be a Sasakian manifold with pseudo-Hermitian bisectional
curvature bounded from above by $-k_{2}<0$. Then any transversally
holomorphic map $f:M\rightarrow N$ satisfies%
\begin{equation*}
f^{\ast }G_{\widetilde{\theta }}\leq \frac{mk_{1}}{k_{2}}G_{\theta }.
\end{equation*}%
In particular, if $k_{1}=0$, then every transversally holomorphic map is
horizontally constant.
\end{theorem}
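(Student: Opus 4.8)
The plan is to run the same maximum-principle argument as in Theorem \ref{theorem4.2}, but using the Bochner formula of Lemma \ref{lemma3.2} instead of Lemma \ref{lemma3.1}, since for a transversally holomorphic map the foliated condition $f_0^\alpha = f_0^{\overline\alpha} = 0$ kills the $e_{L,\widetilde H}$ term and no dilatation hypothesis should be needed. I would set $u = e_{H,\widetilde H}(f)$ and first observe that it suffices to prove $u \le mk_1/k_2$ pointwise: indeed, $f^\ast G_{\widetilde\theta}$ restricted to $H$ is the symmetric $2$-tensor $f_{H,\widetilde H}^\ast L_{\widetilde\theta}$, whose trace with respect to $G_\theta$ is $2u$, and by the $(H,\widetilde H)$-holomorphicity together with the fact that the Webster metric is Hermitian, a pointwise eigenvalue estimate of $f_{H,\widetilde H}^\ast L_{\widetilde\theta}$ against $G_\theta$ reduces to controlling the trace (one can diagonalize and use that each eigenvalue is bounded by the trace of a positive semidefinite Hermitian form; alternatively invoke the standard Schwarz-lemma linear algebra as in Royden's Lemma \ref{lemma4.4} with the curvature form on the target). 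So the analytic core is the bound on $u$.

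\textbf{Key steps.} First I would discard the non-negative terms in Lemma \ref{lemma3.2}: from $\frac12 \triangle_b u = \sum |f_{jk}^\alpha|^2 + \sum f_i^\alpha f_{\overline j}^{\overline\alpha} R_{k\overline i j\overline k} - \sum f_k^\alpha f_{\overline j}^{\overline\beta} f_j^\gamma f_{\overline k}^{\overline\delta} \widetilde R_{\alpha\overline\beta\gamma\overline\delta}$, drop $\sum|f_{jk}^\alpha|^2 \ge 0$, use the Ricci lower bound on $M$ to get $\sum f_i^\alpha f_{\overline j}^{\overline\alpha} R_{k\overline i j\overline k} \ge -mk_1 u$, and use the bisectional curvature upper bound $-k_2$ on the Sasakian target $N$ to get $-\sum f_k^\alpha f_{\overline j}^{\overline\beta} f_j^\gamma f_{\overline k}^{\overline\delta} \widetilde R_{\alpha\overline\beta\gamma\overline\delta} \ge k_2 \sum_{j,k}(\sum_\alpha f_j^\alpha f_{\overline k}^{\overline\alpha})(\sum_\beta f_{\overline j}^{\overline\beta} f_k^\beta)$; the last sum equals $\sum_k |\sum_\alpha (\text{the } k\text{-th column})|^2$ and is bounded below by $u^2/m$ via Cauchy--Schwarz applied to the positive semidefinite Hermitian matrix $(\sum_\alpha f_j^\alpha f_{\overline k}^{\overline\alpha})_{j,k}$ of rank $\le m$ and trace $2u$ — more precisely, for a positive semidefinite $m\times m$ Hermitian matrix $P$ one has $\mathrm{tr}(P^2) \ge (\mathrm{tr}\,P)^2/m$. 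This yields the differential inequality
\begin{equation*}
\tfrac12 \triangle_b u \ge -mk_1 u + \tfrac{k_2}{m} u^2,
\end{equation*}
i.e. $\triangle_b u / u \ge -2mk_1 + (2k_2/m) u$ wherever $u > 0$. Then I would run the cutoff-function argument verbatim from Theorem \ref{theorem4.2}: on $B_p(a)$ maximize $\phi = (a^2 - r^2)^2 u$ at $x_0$, combine the first- and second-order conditions at $x_0$ with the sub-Laplacian comparison Lemma \ref{lemma4.1} (whose hypotheses — completeness, Ricci lower bound, $\|A\|_{C^1}$ bounded — are exactly assumed here) to obtain $\triangle_b u/u \le C(1+a)/(a^2 - r^2(x_0)) + 24a^2/(a^2 - r^2(x_0))^2$ at $x_0$, deduce $u(x_0) \le m^2 k_1/k_2 + (\text{terms} \to 0)$, multiply through by $(a^2 - r^2)^2$, evaluate at an arbitrary fixed $x$, and let $a \to \infty$ to get $u(x) \le m^2 k_1 / k_2$. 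Hmm — I should double-check the constant: dividing $\tfrac12 \triangle_b u \ge -mk_1 u + \tfrac{k_2}{m}u^2$ by $\tfrac{u}{2}$ gives $\triangle_b u/u \ge -2mk_1 + \tfrac{2k_2}{m}u$, so at the maximum $\tfrac{2k_2}{m}u(x_0) \le 2mk_1 + (\text{small})$, hence $u(x_0) \le \tfrac{m^2 k_1}{k_2} + (\text{small})$; after $a\to\infty$ this gives $u \le m^2 k_1/k_2$, whence each eigenvalue of $f_{H,\widetilde H}^\ast L_{\widetilde\theta}$ relative to $L_\theta$ is at most $mk_1/k_2$ (the trace-bounds-eigenvalue step loses the extra factor $m$), giving $f^\ast G_{\widetilde\theta} \le (mk_1/k_2) G_\theta$ on $H$; since the foliated condition forces $f^\ast G_{\widetilde\theta}$ to vanish on $L$ (as $df(\xi) \in \widetilde L$ means $\pi_{\widetilde H} df(\xi) = 0$ but actually $df(\xi)\in\widetilde L$ contributes nothing to $G_{\widetilde\theta}$), the inequality holds as stated. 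Finally, if $k_1 = 0$ then $u \equiv 0$, so $df_{H,\widetilde H} \equiv 0$, and by Lemma \ref{lemma5.1} $f$ is horizontally constant.

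\textbf{Main obstacle.} The routine part is the maximum principle, which is copied from Theorem \ref{theorem4.2}. The step requiring genuine care is the curvature algebra on the target: extracting the bound $-\sum f_k^\alpha f_{\overline j}^{\overline\beta} f_j^\gamma f_{\overline k}^{\overline\delta} \widetilde R_{\alpha\overline\beta\gamma\overline\delta} \ge \tfrac{k_2}{m} u^2$ from the bisectional-curvature upper bound is exactly the classical Schwarz-lemma computation (group the four $f$-factors into the Hermitian matrix $P_{jk} = \sum_\alpha f_j^\alpha \overline{f_k^\alpha}$, write the sum as $\sum_{j,k} \widetilde R(\cdot\,,\cdot\,)$ evaluated on rows of $df$, bound each diagonal bisectional term by $-k_2$ times a product of squared norms, then apply $\mathrm{tr}(P^2) \ge (\mathrm{tr}\,P)^2/m$), and one must be careful that the off-diagonal terms have the right sign — this is guaranteed because bisectional curvature controls $\widetilde R_{\alpha\overline\alpha\gamma\overline\gamma}$ for \emph{all} pairs and the matrix $P$ is positive semidefinite, so the quantity is a sum over an orthogonal (after diagonalizing $P$) collection of vectors exactly as in Lemma \ref{lemma4.4}. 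I do not expect the torsion terms to intervene at all since $N$ is Sasakian and $f$ is foliated, which is precisely why no dilatation hypothesis on $f$ is needed here, in contrast to Theorem \ref{theorem4.2}.
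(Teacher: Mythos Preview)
Your overall strategy—Bochner formula from Lemma~\ref{lemma3.2}, then the cutoff maximum-principle argument of Theorem~\ref{theorem4.2}—is exactly the paper's approach, and the Ricci term and the maximum-principle steps are fine. The gap is in the target-curvature estimate and in how you try to repair the resulting constant.

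For the curvature term you pass through $\mathrm{tr}(P^2)$ and then invoke $\mathrm{tr}(P^2)\ge(\mathrm{tr}\,P)^2/m$, which yields only $-\sum f_k^\alpha f_{\overline j}^{\overline\beta}f_j^\gamma f_{\overline k}^{\overline\delta}\widetilde R_{\alpha\overline\beta\gamma\overline\delta}\ge (k_2/m)u^2$ and hence $u\le m^2k_1/k_2$. You then claim that ``the trace-bounds-eigenvalue step loses the extra factor $m$'', but this is false: the largest eigenvalue of a positive semidefinite matrix is bounded \emph{above} by its trace, not by its trace divided by $m$. So your chain actually proves only $f^\ast G_{\widetilde\theta}\le (m^2k_1/k_2)G_\theta$, off by a factor of $m$.

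The paper avoids this loss by using the curvature symmetry $\widetilde R_{\alpha\overline\beta\gamma\overline\delta}=\widetilde R_{\gamma\overline\beta\alpha\overline\delta}$ from \eqref{equation2.17}: writing $\zeta_j=\sum_\alpha f_j^\alpha\widetilde\eta_\alpha$, one has
\[
-\sum_{j,k}\widetilde R(\zeta_k,\overline{\zeta_j},\zeta_j,\overline{\zeta_k})
= -\sum_{j,k}\widetilde R(\zeta_j,\overline{\zeta_j},\zeta_k,\overline{\zeta_k})
\ge k_2\sum_{j,k}|\zeta_j|^2|\zeta_k|^2 = k_2 u^2,
\]
directly from the bisectional bound (no $\mathrm{tr}(P^2)$ detour, no orthogonality needed). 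This gives the sharp differential inequality $\triangle_b u\ge -2mk_1u+2k_2u^2$, whence $u\le mk_1/k_2$, and then simply $f^\ast G_{\widetilde\theta}\le uG_\theta$ (the paper's \eqref{equation5.1}) finishes the proof with the correct constant.
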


\begin{proof}
Set $u=e_{H,\widetilde{H}}(f)$. It is clear that%
\begin{equation}
f^{\ast }G_{\widetilde{\theta }}\leq uG_{\theta }.  \label{equation5.1}
\end{equation}%
From Lemma \ref{lemma3.2} and the curvature assumptions for both $M$ and $N$, we have%
\begin{equation}
\bigtriangleup _{b}u\geq -2mk_{1}u+2k_{2}u^{2}.  \label{equation5.2}
\end{equation}%
We consider the following function%
\begin{equation*}
\phi (x)={\big(}a^{2}-r^{2}(x){\big)}^{2}u(x)
\end{equation*}%
on $B_{p}(a)$, and assume that $u\not\equiv 0$ on $B_{p}(a)$.\ Similar to
the proof of Theorem \ref{theorem4.2}, we may use \eqref{equation5.2} to deduce at the maximum point $%
x_{0}$ of $\phi $ that 
\begin{equation*}
u(x_{0})\leq \frac{mk_{1}}{k_{2}}+\frac{12a^{2}}{%
k_{2}(a^{2}-r^{2}(x_{0}))^{2}}+\frac{C(1+a)}{k_{2}(a^{2}-r^{2}(x_{0}))}.
\end{equation*}%
Consequently%
\begin{align*}
(a^{2}-r^{2}(x))^{2}u(x) & \leq (a^{2}-r^{2}(x_{0}))^{2}u(x_{0}) \\ 
& \leq \frac{mk_{1}}{k_{2}}(a^{2}-r^{2}(x_{0}))^{2}+\frac{12a^{2}}{k_{2}}+%
\frac{C(1+a)}{k_{2}}(a^{2}-r^{2}(x_{0})) \\ 
& \leq \frac{mk_{1}}{k_{2}}a^{4}+\frac{12a^{2}}{k_{2}}+\frac{C(1+a)a^{2}}{k_{2}%
}
\end{align*}%
for any $x\in B_{p}(a)$. Therefore we get%
\begin{equation}
u(x)\leq \frac{mk_{1}a^{4}}{k_{2}(a^{2}-r^{2}(x))^{2}}+\frac{24a^{2}}{%
k_{2}(a^{2}-r^{2}(x))^{2}}+\frac{2C(1+a)a^{2}}{k_{2}(a^{2}-r^{2}(x))^{2}}. 
\label{equation5.3}
\end{equation}%
Letting $a\rightarrow \infty $ in \eqref{equation5.3}, we have%
\begin{equation*}
\sup_{M}u(x)\leq \frac{mk_{1}}{k_{2}}\text{.}
\end{equation*}%
If $k_{1}=0$, then $u\equiv 0$, which implies by Lemma \ref{lemma5.1} that $f$ is
horizontally constant.
\end{proof}

Similar to Theorem \ref{theorem5.2}, we have

\begin{theorem} \label{theorem5.3}
Let $(M^{2m+1},H,J,\theta )$ be a complete
pseudo-Hermitian manifold with pseudo-Hermitian Ricci curvature bounded from
below by $-mk_{1}\leq 0$ ($k_{1}\geq 0$) and $\Vert A\Vert _{C^{1}}$ bounded
from above. Let $(N^{2n+1},\widetilde{H},\widetilde{J},\widetilde{\theta })$
be a Sasakian manifold with pseudo-Hermitian sectional curvature bounded
from above by $-k_{2}<0$. Then for any transversally holomorphic map $%
f:M\rightarrow N$, one has%
\begin{equation*}
f^{\ast }G_{\theta }\leq \frac{2\nu }{\nu +1}\frac{mk_{1}}{k_{2}}G_{\theta }
\end{equation*}%
where $\nu $ is the maximal rank of $df_{H,\widetilde{H}}$. In particular,
if $k_{1}=0$, then every transversally holomorphic map is horizontally
constant.
\end{theorem}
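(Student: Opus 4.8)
The plan is to follow the same Bochner-plus-cutoff scheme used in the proof of Theorem \ref{theorem5.2}, but replacing the pointwise curvature estimate on the target by Royden's lemma (Lemma \ref{lemma4.4}) to trade the pseudo-Hermitian bisectional curvature bound for the pseudo-Hermitian sectional curvature bound. First I would set $u=e_{H,\widetilde H}(f)$ and note that, since $df_{H,\widetilde H}:H\to\widetilde H$ intertwines $J$ and $\widetilde J$, at each point the image of $df_{H,\widetilde H}$ is a complex subspace of $\widetilde H$ of complex dimension at most $\nu$; one may choose a unitary frame $\{\widetilde\eta_\alpha\}$ of $T_{1,0}N$ so that the relevant vectors $\zeta_k := \sum_\alpha f_k^\alpha\widetilde\eta_\alpha$ lie in that subspace. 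The curvature tensor $\widetilde R_{\alpha\overline\beta\gamma\overline\delta}$ of the Tanaka--Webster connection is, by \eqref{equation2.17}, a symmetric bihermitian form on $\widetilde H$, so the hypothesis that the pseudo-Hermitian sectional curvature is $\le -k_2<0$ says exactly $\widetilde R(\zeta,\overline\zeta,\zeta,\overline\zeta)\le -k_2\|\zeta\|^4$. Hence Lemma \ref{lemma4.4} with $K=-k_2\le 0$ applies (after diagonalizing, i.e. replacing $\{\zeta_k\}$ by an orthogonal basis of their span, which leaves $\sum_{k}\|\zeta_k\|^2=u$ and the quantity $\sum_{k,l}\widetilde R(\zeta_k,\overline\zeta_k,\zeta_l,\overline\zeta_l)$ unchanged) and gives
\[
\sum_{k,l,\alpha,\beta,\gamma,\delta} f_k^\alpha f_{\overline k}^{\overline\beta} f_l^\gamma f_{\overline l}^{\overline\delta}\widetilde R_{\alpha\overline\beta\gamma\overline\delta}\le -\frac{\nu+1}{2\nu}k_2\Big(\sum_k\|\zeta_k\|^2\Big)^2=-\frac{\nu+1}{2\nu}k_2\,u^2 .
\]

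Next I would feed this into Lemma \ref{lemma3.2}: dropping the nonnegative Hessian term $\sum|f_{jk}^\alpha|^2$ and using the Ricci lower bound $R_{k\overline ij\overline k}\ge -mk_1$ in the contraction $\sum f_i^\alpha f_{\overline j}^{\overline\alpha}R_{k\overline ij\overline k}\ge -mk_1 u$, one obtains the differential inequality
\[
\tfrac12\triangle_b u\ \ge\ -mk_1 u+\frac{\nu+1}{2\nu}k_2\,u^2 .
\]
This is exactly \eqref{equation5.2} with $k_2$ replaced by $\frac{\nu+1}{2\nu}k_2$. From here the argument is verbatim that of Theorem \ref{theorem5.2}: apply the maximum principle to $\phi=(a^2-r^2)^2u$ on the Riemannian ball $B_p(a)$, use the sub-Laplacian comparison Lemma \ref{lemma4.1} (valid by the completeness, Ricci lower bound and $\|A\|_{C^1}$-bound on $M$) to control $\triangle_b r^2$, and let $a\to\infty$ to conclude $\sup_M u\le \frac{2\nu}{\nu+1}\frac{mk_1}{k_2}$. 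Since $f^*G_{\widetilde\theta}\le u\,G_\theta$ (the inequality \eqref{equation5.1}), this yields $f^*G_{\widetilde\theta}\le \frac{2\nu}{\nu+1}\frac{mk_1}{k_2}G_\theta$; and if $k_1=0$ then $u\equiv 0$, so $df_{H,\widetilde H}\equiv 0$ and $f$ is horizontally constant by Lemma \ref{lemma5.1}. (I note the displayed statement writes $f^*G_\theta$ on the left, which should read $f^*G_{\widetilde\theta}$; this is a harmless typo matching the form of Theorem \ref{theorem5.2}.)

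The only genuinely new point compared with Theorem \ref{theorem5.2} is the invocation of Royden's lemma, so the main thing to get right is the reduction to an orthogonal family: one must check that the span of $\{\zeta_k=df_{H,\widetilde H}(\eta_k)\}$ has complex dimension $\le\nu$ (this is where the \emph{maximal rank} $\nu$ of $df_{H,\widetilde H}$ enters and why the constant is $\frac{2\nu}{\nu+1}$ rather than involving $n$), and that passing to an orthonormal basis of this span preserves both $u$ and the curvature sum — the latter because both are intrinsic to the image subspace and the bihermitian symmetry of $\widetilde R$ makes $\sum_{k,l}\widetilde R(\zeta_k,\overline\zeta_k,\zeta_l,\overline\zeta_l)$ basis-independent on that subspace. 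Everything else — the Bochner formula, the cutoff maximum-principle trick, and the comparison estimate for $\triangle_b r^2$ — is already in place from the proofs of Lemma \ref{lemma3.2}, Theorem \ref{theorem4.2} and Theorem \ref{theorem5.2}, so the write-up can simply refer back to them.
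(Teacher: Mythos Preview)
Your proposal is correct and follows exactly the route the paper intends: the paper gives no separate proof of Theorem~\ref{theorem5.3} but simply prefaces it with ``Similar to Theorem~\ref{theorem5.2}, we have'', meaning one repeats the Bochner--cutoff argument of Theorem~\ref{theorem5.2} after invoking Royden's Lemma~\ref{lemma4.4} on the target curvature term (just as Theorem~\ref{theorem4.5} refines Theorem~\ref{theorem4.2}). Your handling of the orthogonalization step and the appearance of the maximal rank $\nu$ is the right justification for applying Lemma~\ref{lemma4.4}, and the remaining steps are indeed verbatim those of Theorem~\ref{theorem5.2}.
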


In Theorem \ref{theorem5.2}, the curvature condition for $M$ is quite weak so that the
upper bound $mk_{1}/k_{2}$ for $f^{\ast }G_{\theta }$ is not precise enough.
In following, we will use the technique in \cite{[CCL]} to improve this upper
bound. First we recall the Hessian comparison theorem in Riemannian geometry.

\begin{lemma}[{\cite{[GW]}}] \label{lemma5.4}
Let $(M^{n},g)$ be a Riemannian manifold with
sectional curvature bounded from below by a constant $-K_{0}^{2}$. Let $r(x)$
be the Riemannian distance of $M$ relative to $p$. Then there exists a
constant $C=C(n,K_{0})$ such that%
\begin{equation*}
\left( Hess_{g}(r^{2})_{ij}\right) \leq CK_{0}^{2}(1+r)(\delta _{ij})
\end{equation*}%
holds ourside the cut locus of $p$, where the l.h.s. denotes the Riemannian
Hessian matrix of $r^{2}$ with respect to an orthonormal basis of $(M,g)$.
\end{lemma}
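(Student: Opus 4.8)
\textbf{Proof plan for Lemma \ref{lemma5.4}.}
The plan is to derive the Hessian comparison for $r^2$ from the standard Hessian comparison for the distance function $r$ itself, together with an elementary computation relating $\mathrm{Hess}_g(r^2)$ to $\mathrm{Hess}_g(r)$ and $dr\otimes dr$. The starting point is the classical Hessian comparison theorem (see \cite{[GW]}): if the sectional curvature of $(M^n,g)$ is bounded below by $-K_0^2$, then outside the cut locus of $p$ one has
\begin{equation*}
\mathrm{Hess}_g(r) \leq K_0\coth(K_0 r)\,\bigl(g - dr\otimes dr\bigr),
\end{equation*}
i.e. in an orthonormal frame $\{e_1,\dots,e_n\}$ with $e_n=\nabla r$, the Hessian matrix of $r$ is dominated by $K_0\coth(K_0 r)$ on the $(n-1)$-dimensional subspace orthogonal to $\nabla r$ and vanishes in the radial direction.

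The key steps are as follows. First I would compute, for any smooth function $\rho$ of $r$, the identity
\begin{equation*}
\mathrm{Hess}_g(\rho\circ r) = \rho'(r)\,\mathrm{Hess}_g(r) + \rho''(r)\,dr\otimes dr.
\end{equation*}
Applying this with $\rho(r)=r^2$ gives $\mathrm{Hess}_g(r^2) = 2r\,\mathrm{Hess}_g(r) + 2\,dr\otimes dr$. Next I would insert the Hessian comparison bound for $\mathrm{Hess}_g(r)$: on the subspace orthogonal to $\nabla r$ this yields the bound $2r\cdot K_0\coth(K_0 r)$ for the eigenvalues, while in the radial direction $\mathrm{Hess}_g(r^2)(\nabla r,\nabla r) = 2$. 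The final step is to bound both of these uniformly by $C K_0^2(1+r)$: in the radial direction $2 \le C K_0^2(1+r)$ provided $C$ is chosen large enough relative to $K_0$ (for $r$ bounded away from $0$; near $r=0$ one uses that $\mathrm{Hess}_g(r^2)\to 2g$ and the right-hand side stays bounded below by a positive constant, so enlarging $C$ handles it), and in the tangential directions the elementary estimate $t\coth t \le 1 + t$ for $t\ge 0$ gives $2r K_0\coth(K_0 r) \le 2(1 + K_0 r) = 2 + 2K_0 r \le C K_0^2(1+r)$ after absorbing constants. Collecting these, $\mathrm{Hess}_g(r^2) \le C K_0^2(1+r)\,(\delta_{ij})$ in any orthonormal basis, with $C=C(n,K_0)$.

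The main obstacle is purely bookkeeping: one must be careful that the asserted constant $C$ is allowed to depend on $K_0$ (it is, as stated), since otherwise the radial eigenvalue $2$ cannot be absorbed into $K_0^2(1+r)$ when $K_0$ is small and $r$ is small; with the stated dependence $C=C(n,K_0)$ there is no genuine difficulty. A secondary point to handle with the usual care is the behavior at the cut locus of $p$, where $r$ fails to be smooth — but the statement already restricts to points outside the cut locus, and in applications one invokes Calabi's trick to produce barrier inequalities in the distributional or support sense, exactly as in the use of Lemma \ref{lemma4.1} above. No additional geometric input beyond the Riemannian Hessian comparison theorem is needed.
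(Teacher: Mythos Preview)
Your argument is correct and is the standard route to this estimate: combine the classical Hessian comparison $\mathrm{Hess}_g(r)\le K_0\coth(K_0 r)\,(g-dr\otimes dr)$ with the identity $\mathrm{Hess}_g(r^2)=2r\,\mathrm{Hess}_g(r)+2\,dr\otimes dr$ and the elementary bound $t\coth t\le 1+t$. Note, however, that the paper does not supply its own proof of Lemma~\ref{lemma5.4}; it is simply quoted from \cite{[GW]} as a known Riemannian result and then applied (via Lemma~\ref{lemma5.5}) after converting between the Levi-Civita and Tanaka--Webster Hessians. So there is no alternative approach in the paper to compare against --- your sketch is exactly how one would fill in the cited statement, and your observation that the dependence $C=C(n,K_0)$ is essential (to absorb the radial eigenvalue $2$ into $K_0^2(1+r)$ when $K_0$ is small) is on point.
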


Note that the pseudo-Hermitian sectional curvatrue introduced in \cite{[We]} for a
pseudo-Hermitian manifold $(M,H,J,\theta )$ is a (pseudohermitian) analog of
the holomorphic sectional curvature of a Hermitian manifold rather than an
analog of the sectional curvature of a Riemannian manifold. Let $R(\cdot
,\cdot ,\cdot ,\cdot )$ be the curvature tensor of the Tanaka-Webster
connection $\nabla $ on $M$. E. Barletta investigated in \cite{[Ba]} the following
sectional curvature of Tanaka-Webster connection: 
\begin{equation}
K_{\sec }(\sigma )=R(u,v,u,v)  \label{equation5.4}
\end{equation}%
for any $2$-plane $\sigma \subset T_{x}M$, where $\{u,v\}$ is a $g_{\theta }$%
-orthonomal basis of $\sigma $. In particular, if $\sigma \subset H$, this
sectional curvature will be called the horizontal sectional curvature and
denoted by $K_{\sec }^{H}(\sigma )$. Now, by using the relationship between
the Levi-Civita connection and Tanaka-Webster connection of a
pseudo-Hermitian manifold, we can transform Lemma \ref{lemma5.4} into the following
form.

\begin{lemma} \label{lemma5.5}
Let $(M^{2m+1},H,J,\theta )$ be a pseudo-Hermitian
manifold with $K_{\sec }^{H}$ bounded below by $-\delta _{1}\leq 0$ and its
torsion satisfying $\Vert A\Vert _{C^{1}}\leq \delta _{2}$. Let $r(x)$ be
the Riemannian distance of $(M,g_{\theta })$ relative to $p$. Then there
exists a positive constant $D=D(m,\delta _{1},\delta _{2})$ such that%
\begin{equation*}
(r^{2})_{k\overline{l}}\zeta _{k}\zeta _{\overline{l}}+(r^{2})_{\overline{k}%
l}\zeta _{\overline{k}}\zeta _{l}\leq D(1+r)
\end{equation*}%
holds ourside the cut locus of $p$, where $\zeta =(\zeta _{1},...,\zeta
_{m})\in C^{m}$ is any unitary vector and $\left( r^{2}\right) _{k\overline{l%
}}=(\nabla dr^{2})(\eta _{k},\eta _{\overline{l}})$ denotes the complex
Hessian matrix of $r^{2}$ defined with respect to the Tanaka-Webster
connection.
\end{lemma}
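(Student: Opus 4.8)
The plan is to pass from the Riemannian Hessian bound of Lemma \ref{lemma5.4} to the complex (Tanaka-Webster) Hessian bound by carefully tracking the difference between the Levi-Civita connection $\nabla^{LC}$ of $g_\theta$ and the Tanaka-Webster connection $\nabla$. First I would fix the hypotheses on $M$: since $K_{\sec}^H$ is bounded below only on horizontal $2$-planes, I must first upgrade this to a lower bound on the full Riemannian sectional curvature of $g_\theta$. This is where the torsion bound $\Vert A\Vert_{C^1}\le\delta_2$ enters: using the well-known formula (cf. \cite{[DT]}) expressing the Levi-Civita curvature of $g_\theta$ in terms of the Tanaka-Webster curvature $R$, the pseudo-Hermitian torsion $\tau$ (equivalently $A$), the Webster scalar curvature, and $d\theta$, one sees that the Riemannian sectional curvature of an arbitrary $2$-plane differs from a horizontal expression only by terms controlled algebraically by $\Vert A\Vert_{C^0}$ (and the $J$-action); hence it is bounded below by some $-K_0^2$ with $K_0=K_0(m,\delta_1,\delta_2)$. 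Then Lemma \ref{lemma5.4} applies and gives $\mathrm{Hess}_{g_\theta}(r^2)\le CK_0^2(1+r)(\delta_{ij})$ as a bound on the Riemannian Hessian matrix.

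Next I would convert the Riemannian Hessian $\mathrm{Hess}_{g_\theta}(r^2)(X,Y)=X(Y r^2)-(\nabla^{LC}_X Y)r^2$ into the Tanaka-Webster Hessian $(\nabla dr^2)(X,Y)=X(Yr^2)-(\nabla_X Y)r^2$. The difference is $(\nabla dr^2)(X,Y)-\mathrm{Hess}_{g_\theta}(r^2)(X,Y)=\big((\nabla^{LC}_X Y-\nabla_X Y)\big)r^2 = -(dr^2)(\nabla^{LC}_X Y - \nabla_X Y)$. The contorsion tensor $\nabla^{LC}-\nabla$ is, by the standard structure theory (see \cite{[DT]}, Lemma 1.3 or thereabouts), an explicit expression in $d\theta$, $\theta$, $J$ and the torsion $A$; schematically $(\nabla^{LC}_X Y-\nabla_X Y)$ involves $d\theta(X,Y)\xi$, $d\theta(X,JY)$-type terms (giving $g_\theta$-controlled pieces) and $A(X,Y)$-type terms. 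Contracting with $dr^2$ and using $|dr^2|_{g_\theta}=2r|\nabla^{g_\theta} r|\le 2r$ (since $r$ is a Riemannian distance function, $|\nabla^{g_\theta}r|=1$ off the cut locus), one bounds this correction term by $C'(1+r)$ with $C'=C'(m,\delta_2)$, after restricting to unit horizontal vectors. Evaluating on the unitary frame, i.e. taking $X=\eta_k\zeta_k$, $Y=\eta_{\bar l}\zeta_{\bar l}$ (and its conjugate), and summing, the real combination $(r^2)_{k\bar l}\zeta_k\zeta_{\bar l}+(r^2)_{\bar k l}\zeta_{\bar k}\zeta_l$ equals the restriction of the real Riemannian Hessian to the corresponding real horizontal vector plus the controlled contorsion correction; both are $\le D(1+r)$ for a suitable $D=D(m,\delta_1,\delta_2)$. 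This yields the claimed inequality outside the cut locus of $p$.

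I expect the main obstacle to be the first step, namely organizing the comparison between $K_{\sec}^H$ (horizontal sectional curvature of $\nabla$) and the genuine Riemannian sectional curvature of $g_\theta$ so that Lemma \ref{lemma5.4} becomes applicable; this requires the precise curvature-difference identity relating $R^{LC}$ and $R$, and care that the Ricci/scalar contributions that appear there are themselves controlled by the stated bounds (the $\Vert A\Vert_{C^1}$ hypothesis, rather than just $\Vert A\Vert_{C^0}$, is presumably what is needed to control a first-derivative-of-$A$ term appearing in $R^{LC}-R$). The second step — contracting the contorsion with $dr^2$ — is routine once one recalls the explicit contorsion formula, because $|\nabla^{g_\theta}r|\equiv1$ makes the $dr^2$ factor harmless. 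A minor point to handle (as in the proof of Theorem \ref{theorem4.2}) is the non-smoothness of $r$ on the cut locus, but the statement already restricts to points outside the cut locus, so no barrier/support-function argument is needed here.
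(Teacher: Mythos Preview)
Your proposal is correct and follows essentially the same two–step strategy as the paper: first use the curvature relation between $\nabla^{LC}$ and $\nabla$ (formula (5.6), together with (5.8) for planes containing $\xi$) plus the bound $\Vert A\Vert_{C^1}\le\delta_2$ to get a lower bound on the full Riemannian sectional curvature and hence apply Lemma~\ref{lemma5.4}; then compare the two Hessians via the contorsion $\nabla^{LC}-\nabla$.

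The one refinement you miss is that in step two the paper does not merely \emph{estimate} the contorsion correction but observes that it \emph{vanishes} for the specific real Hermitian combination in the statement. Concretely, from (5.5) one has, for $X,Y\in H$,
\[
(\nabla^{LC}dr^2)(X,Y)-(\nabla dr^2)(X,Y)=\big(d\theta(X,Y)+A(X,Y)\big)\xi(r^2).
\]
Evaluating at $X=\eta_k$, $Y=\eta_{\bar l}$, the $A$–term drops out (since $\tau(T_{1,0}M)\subset T_{0,1}M$ forces $A(\eta_k,\eta_{\bar l})=0$) and one is left with the purely imaginary term $i\delta_{kl}\,\xi(r^2)$. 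Adding the conjugate expression with $X=\eta_{\bar k}$, $Y=\eta_l$ contributes $-i\delta_{kl}\,\xi(r^2)$, so
\[
\sum_{k,l}\big[(r^2)_{k\bar l}\zeta_k\zeta_{\bar l}+(r^2)_{\bar k l}\zeta_{\bar k}\zeta_l\big]
=\sum_{k,l}\big[(\nabla^{LC}dr^2)(\eta_k,\eta_{\bar l})\zeta_k\zeta_{\bar l}+(\nabla^{LC}dr^2)(\eta_{\bar k},\eta_l)\zeta_{\bar k}\zeta_l\big],
\]
an exact identity (equation (5.10) in the paper). Thus Lemma~\ref{lemma5.4} applies directly with no extra error term. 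Your cruder estimate of the correction by $C'(1+r)$ using $|\xi(r^2)|\le 2r$ is valid and suffices for the stated conclusion, but the cancellation gives a cleaner argument and a slightly better constant.
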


\begin{proof}
Let $\nabla ^{\theta }$ be the Riemannian connection of the
Webster metric $g_{\theta }$, and $R^{\theta }(X,Y)Z$ be its curvature
tensor. The relationship between two connections $\nabla ^{\theta }$ and $%
\nabla $ is given by (cf. Lemma 1.3 in \cite{[DT]}):%
\begin{equation}
\nabla ^{\theta }=\nabla -(d\theta +A)\otimes \xi +\tau \otimes \theta
+2\theta \odot J,  \label{equation5.5}
\end{equation}%
where $2(\theta \odot J)(X,Y)=\theta (X)JY+\theta (Y)JX$ for any $X,Y\in TM$%
. \ This leads to the curvature relation between these two connections (cf.
Theorem 1.6 on page 49 of \cite{[DT]}):%
\begin{align}
\begin{aligned}
R^{\theta }(X,Y)Z& =R(X,Y)Z+(LX\wedge LY)Z+2d\theta (X,Y)JZ \\ 
& \quad -\langle S(X,Y),Z\rangle \xi +\theta (Z)S(X,Y) \\ 
& \quad -2\langle \theta \wedge \mathcal{O}\rangle (X,Y),Z\rangle \xi +2\theta
(Z)(\theta \wedge \mathcal{O})(X,Y)
\end{aligned}
\label{equation5.6}
\end{align}%
for any $X,Y,Z\in TM$, where $\xi $ is the Reeb vector field, $\langle \cdot
,\cdot \rangle =g_{\theta }(\cdot ,\cdot )$ and%
\begin{equation}
S(X,Y)=(\nabla _{X}\tau )Y-(\nabla _{Y}\tau )X,\text{ }\mathcal{O}=\tau
^{2}+2J\tau -I,\text{ }L=\tau +J.  \label{equation5.7}
\end{equation}%
Using either \eqref{equation5.6} or the last sum term in \eqref{equation2.16}, we can deduce that: 
\begin{equation}
\langle R(\xi ,Y)Z,W\rangle =\langle S(Z,W),Y\rangle  \label{equation5.8}
\end{equation}%
for any $Y,Z,W\in H$. In terms of \eqref{equation5.6}, \eqref{equation5.7}, \eqref{equation5.8} and the assumptions
for $K_{\sec }^{H}$ and the torsion $\tau $, it is easy to find that the
Riemannian sectional curvature of $\nabla ^{\theta }$ is bounded below by a
constant $-\Lambda ^{2}$ depending only on $\delta _{1}$ and $\delta _{2}$.

By the definitions for Hessians with respect to $\nabla ^{\theta
}\allowbreak $ and $\nabla $ respectively, we have for any $X,Y\in H$ that 
\begin{equation}
\begin{aligned}
(\nabla ^{\theta }dr^{2})(X,Y)-(\nabla dr^{2})(X,Y) & =-(\nabla _{X}^{\theta
}Y-\nabla _{X}Y)r^{2} \\ 
& ={\big(}d\theta (X,Y)+A(X,Y){\big)}\xi (r^{2}).%
\end{aligned}%
\label{equation5.9}
\end{equation}%
Let $\{\eta _{k}\}$ be any unitary basis in $T_{1,0}M$. Taking $X=\eta _{k}$%
, $Y=\eta _{\overline{l}}$, it follows from \eqref{equation5.9} that%
\begin{equation*}
(\nabla ^{\theta }dr^{2})(\eta _{k},\eta _{\overline{l}})=(\nabla
dr^{2})(\eta _{k},\eta _{\overline{l}})+i\delta _{kl}\xi (r^{2}).
\end{equation*}%
Consequently%
\begin{equation}
\sum_{k,l} \left[ (r^{2})_{k\overline{l}}\zeta _{k}\zeta _{\overline{l}
}+(r^{2})_{\overline{k}l}\zeta _{\overline{k}}\zeta
_{l} \right] =\sum_{k,l} \left[ (\nabla ^{\theta }dr^{2})(\eta _{k},\eta _{\overline{l}
})\zeta _{k}\zeta _{\overline{l}}+(\nabla ^{\theta }dr^{2})(\eta _{\overline{%
k}},\eta _{l})\zeta _{\overline{k}}\zeta _{l} \right] 
\label{equation5.10}
\end{equation}%
where $(r^{2})_{k\overline{l}}=(\nabla dr^{2})(\eta _{k},\eta _{\overline{l}%
})$ and $(\zeta _{1},...,\zeta _{m})\ $is any unitary vector in $C^{m}$.
Combining \eqref{equation5.10} and Lemma \ref{lemma5.4}, we conclude that there is a constant $%
D=D(m,\delta _{1},\delta _{2})$ such that 
\begin{equation*}
\sum_{k,l}\{(r^{2})_{k\overline{l}}\zeta _{k}\zeta _{\overline{l}}+(r^{2})_{%
\overline{k}l}\zeta _{\overline{k}}\zeta _{l}\}\leq D(1+r).
\end{equation*}%
\end{proof}

The following result improves the upper bound of $f^{\ast }G_{\widetilde{%
\theta }}$ in Theorem \ref{theorem5.2} for a transversally holomorphic map.

\begin{theorem} \label{theorem5.6}
Let $(M^{2m+1},H,J,\theta )$ be a complete
pseudo-Hermitian manifold with pseudo-Hermitian sectional curvature bounded
from below by $-k_{1}$ ($k_{1}\geq 0$) and $\Vert A\Vert _{C^{1}}$ bounded
from above. Let $(N^{2n+1},\widetilde{H},\widetilde{J},\widetilde{\theta })$
be a Sasakian manifold with pseudo-Hermitian sectional curvature bounded
from above by $-k_{2}<0$. If the horizontal sectional curvature $K_{\sec
}^{H}$ of $M$ is bounded from below, then any transversally holomorphic map $%
f:M\rightarrow N$ satisfies:%
\begin{equation*}
f^{\ast }\widetilde{G}_{\theta }\leq \frac{k_{1}}{k_{2}}G_{\theta }.
\end{equation*}%
In particular, if $k_{1}=0$, then every transversally holomorphic map is
horizontally constant.
\end{theorem}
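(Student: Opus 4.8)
The plan is to refine the argument of Theorem \ref{theorem5.2} by replacing the crude comparison $\phi = (a^2 - r^2)^2 u$ with an exponential-type test function adapted to the technique of \cite{[CCL]}, so that the harmful first-order and second-order error terms coming from the distance function can be absorbed. Write $u = e_{H,\widetilde{H}}(f)$; as in \eqref{equation5.1} it suffices to prove $\sup_M u \le k_1/k_2$. From Lemma \ref{lemma3.2} together with the curvature hypotheses on $M$ (pseudo-Hermitian sectional curvature $\ge -k_1$, hence a Ricci-type lower bound via summation) and on the Sasakian target $N$ (pseudo-Hermitian bisectional, a fortiori sectional, curvature $\le -k_2$), one gets a differential inequality of Bochner type
\begin{equation*}
\triangle_b u \ge -2k_1 u + 2k_2 u^2 + (\text{gradient terms that are nonnegative after the trick}).
\end{equation*}
The point of the sharper bound $k_1/k_2$ instead of $mk_1/k_2$ is that for a \emph{transversally holomorphic} map the relevant curvature contraction involves the \emph{sectional} rather than the Ricci curvature, so the factor $m$ drops; this is exactly the gain exploited in \cite{[CCL]}.

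First I would fix $a>0$, work on the ball $B_p(a)$, and consider $\phi = (a^2 - r^2)^2 u$ (or, following \cite{[CCL]} more closely, a function of the form $(a^2-r^2)^2 u \cdot e^{-\lambda r}$ or $u/(something)$ chosen to kill the linear-in-$u$ error), which attains an interior maximum at some $x_0$ since $\phi$ vanishes on $\partial B_p(a)$. At $x_0$ one has $\nabla^H \phi = 0$ and $\triangle_b \phi \le 0$; expanding these and using $\nabla^H u / u = -2\,\nabla^H(a^2-r^2)/(a^2-r^2)$ to eliminate $|\nabla^H u|^2$, I would arrive at an inequality of the shape
\begin{equation*}
\frac{\triangle_b u}{u}(x_0) \le \frac{2\,\triangle_b r^2}{a^2 - r^2} + (\text{const})\,\frac{|\nabla^H r^2|^2}{(a^2-r^2)^2}.
\end{equation*}
Here the new ingredient relative to Theorem \ref{theorem5.2} is the use of the \emph{complex Hessian comparison}, Lemma \ref{lemma5.5}: because $K^H_{\sec}$ of $M$ is bounded below and $\|A\|_{C^1}$ is bounded, one has $(r^2)_{k\overline l}\zeta^k\overline{\zeta^l} + \text{c.c.} \le D(1+r)$ for unitary $\zeta$, which controls the sub-Laplacian of $r^2$ appearing above (note $\triangle_b r^2 = \sum_k [(r^2)_{k\overline k} + (r^2)_{\overline k k}]$) with a constant $D = D(m,k_1,\|A\|_{C^1})$ independent of $a$. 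Combining with $|\nabla^H r^2|^2 \le 4a^2$ yields
\begin{equation*}
\frac{\triangle_b u}{u}(x_0) \le \frac{2D(1+a)}{a^2 - r^2(x_0)} + \frac{(\text{const})\,a^2}{(a^2-r^2(x_0))^2}.
\end{equation*}

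Then I would feed in the Bochner lower bound $\triangle_b u/u \ge -2k_1 + 2k_2 u$ at $x_0$, solve for $u(x_0)$, multiply through by $(a^2 - r^2(x_0))^2$, and compare with $(a^2 - r^2(x))^2 u(x) \le (a^2 - r^2(x_0))^2 u(x_0)$ for all $x \in B_p(a)$; this gives, for every fixed $x$,
\begin{equation*}
u(x) \le \frac{k_1}{k_2}\cdot\frac{a^4}{(a^2 - r^2(x))^2} + \frac{O(a^3)}{(a^2-r^2(x))^2}.
\end{equation*}
Letting $a \to \infty$ with $x$ fixed kills the error term and leaves $u(x) \le k_1/k_2$, hence $f^*\widetilde{G}_\theta \le (k_1/k_2) G_\theta$ by \eqref{equation5.1}; and if $k_1 = 0$ then $u \equiv 0$, so $f$ is horizontally constant by Lemma \ref{lemma5.1}. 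The main obstacle I anticipate is bookkeeping in the Bochner step: one must check that the quartic target-curvature term $-\sum \widetilde R_{\alpha\overline\beta\gamma\overline\delta} f_k^\alpha f_{\overline j}^{\overline\beta} f_j^\gamma f_{\overline k}^{\overline\delta}$ in Lemma \ref{lemma3.2} really contributes $+2k_2 u^2$ with the sharp constant (this is where the Sasakian hypothesis and the bisectional/sectional bound on $N$ enter, and where replacing bisectional by sectional would instead invoke Royden's Lemma \ref{lemma4.4}), and that the source-curvature term $\sum R_{k\overline i j\overline k} f_i^\alpha f_{\overline j}^{\overline\alpha}$ is bounded below by $-2k_1 u$ using only the sectional — not Ricci — lower bound, which is precisely the place the improvement from $mk_1$ to $k_1$ is realized. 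The comparison-geometry inputs (Lemma \ref{lemma5.5}) are already in hand, so the remainder is the maximum-principle machinery, which parallels Theorem \ref{theorem4.2} and Theorem \ref{theorem5.2}.
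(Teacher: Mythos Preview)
Your proposal has a genuine gap, and it is precisely the step you flag as ``the main obstacle''. The claimed inequality $\triangle_b u \ge -2k_1 u + 2k_2 u^2$ does \emph{not} follow from the hypotheses. In Lemma~\ref{lemma3.2} the source-curvature term is
\[
\sum_{i,j,k,\alpha} f_i^\alpha f_{\overline{j}}^{\overline{\alpha}} R_{k\overline{i}j\overline{k}}
= \sum_{i,j} R_{j\overline{i}}\Big(\sum_\alpha f_i^\alpha f_{\overline{j}}^{\overline{\alpha}}\Big),
\]
which is a contraction against the pseudo-Hermitian \emph{Ricci} tensor of $M$; a lower bound $-k_1$ on the pseudo-Hermitian \emph{sectional} curvature does not give $R_{j\overline{i}}\ge -k_1\delta_{ij}$, only (at best) a bound with an extra factor of $m$, which lands you back at Theorem~\ref{theorem5.2}. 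Likewise the target term in Lemma~\ref{lemma3.2} is a sum of \emph{bisectional} curvatures of $N$; under only a sectional upper bound one must invoke Royden's Lemma~\ref{lemma4.4}, picking up a factor $\tfrac{\nu+1}{2\nu}$. No choice of cut-off (polynomial or exponential in $r$) can repair this: the loss is in the Bochner identity for $u$, not in the error terms.

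The paper's proof (which is the Chen--Cheng--Lu technique you cite) circumvents both losses by not working with $u=e_{H,\widetilde H}(f)$ at all. It applies the maximum principle to the \emph{largest eigenvalue} $\lambda(x)$ of $\big(\sum_\alpha f_i^\alpha f_{\overline j}^{\overline\alpha}\big)$, smoothed near the max point $x_0$ by $\widetilde\lambda=\sum_\alpha f_1^\alpha f_{\overline 1}^{\overline\alpha}$ with $\eta_1(x_0)$ the unit eigenvector for $\lambda(x_0)$, and takes only the $(1,\overline 1)$-component of the Hessian of $\log\big[(a^2-r^2)^2\widetilde\lambda\big]$ rather than the full sub-Laplacian. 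The commutation formula \eqref{equation3.14} then gives
\[
f_{11\overline 1}^\alpha=\sum_i f_i^\alpha R_{11\overline 1}^i-\sum_{\beta,\gamma,\delta} f_1^\beta f_1^\gamma f_{\overline 1}^{\overline\delta}\widetilde R_{\beta\gamma\overline\delta}^\alpha,
\]
so that after pairing with $f_{\overline 1}^{\overline\alpha}$ only the \emph{sectional} curvature $R_{1\overline 1 1\overline 1}$ of $M$ and the \emph{sectional} curvature of $N$ in the single direction $df_{H,\widetilde H}(\eta_1)$ appear---this is where the factor $m$ drops out and where the sectional (rather than bisectional) hypothesis on $N$ suffices without Royden. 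Correspondingly, Lemma~\ref{lemma5.5} is applied to the single entry $(r^2)_{1\overline 1}+(r^2)_{\overline 1 1}$, not to $\triangle_b r^2$. The missing idea in your plan is exactly this: replace the trace $u$ by the top eigenvalue $\lambda$, and replace $\triangle_b$ by the Hessian in the eigenvector direction.
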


\begin{proof}
At each point $x\in M$, we may choose a unitary frame $%
\{\zeta _{i}(x)\}_{i=1}^{m}$ for $T_{1,0}M$ such that 
\begin{equation*}
f^{\ast }G_{\widetilde{\theta }}=\sum_{i=1}^{m}\lambda _{i}(x)\theta
^{i}\theta ^{\overline{i}}.
\end{equation*}%
where $\lambda _{1}(x)\geq \lambda _{2}(x)\geq \ldots \geq \lambda _{m}(x)$
are the eigenvalues of $\left( \sum_{\alpha }f_{i}^{\alpha }f_{\overline{j}%
}^{\overline{\alpha }}\right) _{x}$. We also write the largest eigenvalue $%
\lambda _{1}(x)$ as $\lambda (x)$ for simplicity. Clearly we have 
\begin{equation}
f^{\ast }G_{\widetilde{\theta }}\leq \lambda (x)G_{\theta }.  \label{equation5.11}
\end{equation}%
Thus it suffices to estimate the supremum of of the function $\lambda $. Let 
$r$ denote the Riemannian distance function of $g_{\theta }$ relative to a
fixed point $p$. Define 
\begin{equation}
\phi (x)=\left( a^{2}-r^{2}(x)\right) ^{2}\lambda (x)  \label{equation5.12}
\end{equation}%
for $x\in B_{p}(a)$. Since $M$ is complete and $\phi \mid _{\partial
B_{p}(a)}=0$, we see that $\phi $ attains its maximum at some point $x_{0}$
in $B_{p}(a)$. Without loss of generality, we assume that $\lambda
\not\equiv 0$ on $B_{p}(a)$, so $\lambda (x_{0})>0$. Note that both $\lambda 
$ and $r$ may not be differentiable near $x_{0}$. One may remedy the
non-differentiability of $\lambda $ by the following method. Choose any
local (smooth) unitary frame field $\{\eta _{i}\}$ for $T_{1,0}M$ such that $%
\eta _{1}(x_{0})=\zeta _{1}(x_{0})$ is the eigenvector of $\lambda (x_{0})$.
Define%
\begin{equation}
\widetilde{\lambda }=\mid df(\eta _{1})\mid ^{2}=\sum_{\alpha }f_{1}^{\alpha
}f_{\overline{1}}^{\overline{\alpha }}  \label{equation5.13}
\end{equation}%
which is obviously smooth near $x_{0}$, and%
\begin{equation}
\widetilde{\lambda }(x)\leq \lambda (x) , \qquad 
\widetilde{\lambda }(x_{0})=\lambda (x_{0}).
\label{equation5.14}
\end{equation}%
Hence $\left( a^{2}-r^{2}(x)\right) ^{2}\widetilde{\lambda }(x)$ also
attains its (local) maximum at $x_{0}$ and 
\begin{equation*}
\left( a^{2}-r^{2}(x_{0})\right) ^{2}\widetilde{\lambda }(x_{0})=\phi
(x_{0}).
\end{equation*}%
The possible non-differentiability of $r$ can be treated as usual (cf. \cite{[Che]}). Without loss of generality, we may assume that $r$ is already
smooth near $x_{0}$. Let%
\begin{equation}
\widetilde{\phi }(x)=\left( a^{2}-r^{2}(x)\right) ^{2}\widetilde{\lambda }%
(x).  \label{equation5.15}
\end{equation}%
By computing the gradient and Hessian of $\log \widetilde{\phi }(x)$ at the
maximum point $x_{0}$, we have%
\begin{align}
\frac{\nabla \widetilde{\lambda }}{\widetilde{\lambda }}+2\frac{\nabla
\left( a^{2}-r^{2}\right) }{a^{2}-r^{2}} & =0,  \label{equation5.16} \\
(\log \widetilde{\phi })_{1\overline{1}}+(\log \widetilde{\phi })_{\overline{%
1}1} & \leq 0,  \label{equation5.17}
\end{align}%
where $(\log \widetilde{\phi })_{i\overline{j}}$ denotes the Hessian defined
with respect to the Tanaka-Webster connection. It follows from \eqref{equation5.17} and
\eqref{equation5.16} that 
\begin{equation}
\frac{\widetilde{\lambda }_{1\overline{1}}+\widetilde{\lambda }_{\overline{1}%
1}}{\widetilde{\lambda }}-12\frac{\mid (a^{2}-r^{2})_{1}\mid ^{2}}{%
(a^{2}-r^{2})^{2}}-2\frac{(r^{2})_{1\overline{1}}+(r^{2})_{\overline{1}1}}{%
a^{2}-r^{2}}\leq 0.  \label{equation5.18}
\end{equation}%
holds at $x_{0}\in B_{p}(a)$. Clearly 
\begin{equation}
\mid (a^{2}-r^{2})_{1}(x_{0})\mid^2 \leq 4a^{2}.  \label{equation5.19}
\end{equation}%
In terms of Lemma \ref{lemma5.5} by taking $\zeta =(1,0,..,0)$, we obtain%
\begin{equation}
(r^{2})_{1\overline{1}}(x_{0})+(r^{2})_{\overline{1}1}(x_{0})\leq D(1+a). 
\label{equation5.20}
\end{equation}%
Since $f$ is transversally holomorphic and $N$ is Sasakian, it follows from
\eqref{equation3.11} and \eqref{equation3.14} that%
\begin{equation}
f_{11\overline{1}}^{\alpha }=\sum_{i}f_{i}^{\alpha }R_{11\overline{1}%
}^{i}-\sum_{\beta ,\gamma ,\delta }f_{1}^{\beta }f_{1}^{\gamma }f_{\overline{%
1}}^{\overline{\delta }}\widetilde{R}_{\beta \gamma \overline{\delta }%
}^{\alpha }.  \label{equation5.21}
\end{equation}%
Due to \eqref{equation5.13} and \eqref{equation3.11}, we get%
\begin{equation}
\widetilde{\lambda }_{1\overline{1}}+\widetilde{\lambda }_{\overline{1}%
1}=2\sum_{\alpha }{\big(}\mid f_{11}^{\alpha }\mid ^{2}+f_{\overline{1}}^{%
\overline{\alpha }}f_{11\overline{1}}^{\alpha }+f_{1}^{\alpha }f_{\overline{1%
}\overline{1}1}^{\overline{\alpha }}{\big).}  \label{equation5.22}
\end{equation}%
Substituting \eqref{equation5.21} into \eqref{equation5.22} and using the curvature assumptions for both 
$M$ and $N$, we find that%
\begin{align}
\left( \widetilde{\lambda }_{1\overline{1}}+\widetilde{\lambda }_{\overline{1%
}1}\right) (x_{0}) & \geq \sum_{i,\alpha }(f_{i}^{\alpha }f_{\overline{1}}^{\overline{\alpha }%
}R_{11\overline{1}}^{i}+f_{\overline{i}}^{\overline{\alpha }}f_{1}^{\alpha
}R_{\overline{1}\overline{1}1}^{\overline{i}})-\sum_{\alpha ,\beta ,\gamma
,\delta }(f_{\overline{1}}^{\overline{\alpha }}f_{1}^{\beta }f_{1}^{\gamma
}f_{\overline{1}}^{\overline{\delta }}\widetilde{R}_{\beta \gamma \overline{%
\delta }}^{\alpha }+f_{1}^{\alpha }f_{\overline{1}}^{\overline{\beta }}f_{%
\overline{1}}^{\overline{\gamma }}f_{1}^{\delta }\widetilde{R}_{\overline{%
\beta }\overline{\gamma }\delta }^{\overline{\alpha }}) \nonumber \\ 
& \geq -2k_{1}\lambda (x_{0})+2k_{2}\lambda ^{2}(x_{0}).
\label{equation5.23}
\end{align}
From \eqref{equation5.18}, \eqref{equation5.19}, \eqref{equation5.20} and \eqref{equation5.23}, we may deduce that 
\begin{equation}
\lambda (x_{0})\leq \frac{k_{1}}{k_{2}}+\frac{24a^{2}}{%
k_{2}(a^{2}-r^{2}(x_{0}))^{2}}+\frac{D(1+a)}{k_{2}(a^{2}-r^{2}(x_{0}))}. 
\label{equation5.24}
\end{equation}%
Consequently%
\begin{equation*}
(a^{2}-r^{2}(x))^{2}\lambda (x)\leq (a^{2}-r^{2}(x_{0}))^{2}\lambda (x_{0})
\leq \frac{k_{1}}{k_{2}}a^{4}+\frac{24a^{2}}{k_{2}}+\frac{D(1+a)a^{2}}{k_{2}}
\end{equation*}%
which implies%
\begin{equation}
\lambda (x)\leq \frac{k_{1}}{k_{2}}\frac{a^{4}}{(a^{2}-r^{2}(x))^{2}}+\frac{%
24a^{2}}{k_{2}(a^{2}-r^{2}(x))^{2}}+\frac{D(1+a)a^{2}}{(a^{2}-r^{2}(x))^{2}}
\label{equation5.25}
\end{equation}%
for any $x\in B_{p}(a)$. Letting $a\rightarrow \infty $ in \eqref{equation5.25}, we get%
\begin{equation*}
\sup_{M}\lambda \leq \frac{k_{1}}{k_{2}}.
\end{equation*}%
\end{proof}

Note that if $m=1$ in Theorem \ref{theorem5.6}, the horizontal sectional curvature is
just the pseudo-Hermitian sectional curvature. Therefore we have

\begin{corollary} \label{corollary5.7}
Let $(M^{3},H,J,\theta )$ be a complete $3$%
-dimensional pseudo-Hermitian manifold with pseudo-Hermitian sectional
curvature bounded from below by $-k_{1}$ ($k_{1}\geq 0$) and $\Vert A\Vert
_{C^{1}}$ bounded from above. Let $(N^{2n+1},\widetilde{H},\widetilde{J},%
\widetilde{\theta })$ be a Sasakian manifold with pseudo-Hermitian sectional
curvature bounded from above by $-k_{2}<0$. Then any transversally
holomorphic map $f:M\rightarrow N$ satisfies:%
\begin{equation*}
f^{\ast }\widetilde{G}_{\theta }\leq \frac{k_{1}}{k_{2}}G_{\theta }.
\end{equation*}%
In particular, if $k_{1}=0$, then every transversally holomorphic map is
horizontally constant.
\end{corollary}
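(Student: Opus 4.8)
The plan is to obtain Corollary~\ref{corollary5.7} as the special case $m=1$ of Theorem~\ref{theorem5.6}. The only thing to verify is that, in dimension three, the hypothesis of Theorem~\ref{theorem5.6} requiring the horizontal sectional curvature $K_{\sec}^{H}$ of $M$ to be bounded from below is automatically implied by the assumed lower bound $-k_{1}$ on the pseudo-Hermitian sectional curvature of $M$. Granting this, all the remaining hypotheses of Theorem~\ref{theorem5.6} are literally those of the corollary, and the conclusion follows at once.

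To check the claim, fix $x\in M$. Since $m=1$, the complex rank of $T_{1,0}M$ is one, so the Levi distribution $H_{x}$ has real dimension two and the \emph{only} $2$-plane contained in $H_{x}$ is $H_{x}$ itself. Thus $K_{\sec}^{H}$ at $x$ is a single number: if $e\in H_{x}$ is any $g_{\theta}$-unit vector, then $\{e,Je\}$ is a $g_{\theta}$-orthonormal basis of $H_{x}$ and $K_{\sec}^{H}(H_{x})=R(e,Je,e,Je)$. Writing $\eta=\tfrac{1}{\sqrt{2}}(e-\sqrt{-1}\,Je)\in T_{1,0}M$ for the associated unitary vector (so that $e$ and $Je$ are, up to the usual normalization, the real and imaginary parts of $\eta$) and expanding $R(e,Je,e,Je)$ by multilinearity, the antisymmetry of $R$ in its first and in its last pair of arguments kills every term containing a repeated $(1,0)$ or $(0,1)$ argument, and the symmetries \eqref{equation2.17} (in particular the conjugation symmetry) reduce the remaining sum to a fixed positive constant times $R_{1\overline{1}1\overline{1}}$, i.e.\ to the pseudo-Hermitian sectional curvature in the direction $\eta$. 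This is the pseudo-Hermitian analogue of the classical identity that equates the sectional curvature of a holomorphic $2$-plane with the holomorphic sectional curvature in the K\"{a}hler setting. Since the pseudo-Hermitian sectional curvature of $M$ is $\geq -k_{1}$ everywhere, it follows that $K_{\sec}^{H}$ is bounded from below on $M$.

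With this observation, $M$ satisfies all the curvature and torsion hypotheses imposed on the source manifold in Theorem~\ref{theorem5.6} (completeness, pseudo-Hermitian sectional curvature $\geq -k_{1}$, $\Vert A\Vert_{C^{1}}$ bounded, $K_{\sec}^{H}$ bounded below), while $N$ is, by assumption, a Sasakian manifold with pseudo-Hermitian sectional curvature $\leq -k_{2}<0$. Applying Theorem~\ref{theorem5.6} therefore yields $f^{\ast}\widetilde{G}_{\theta}\leq \tfrac{k_{1}}{k_{2}}G_{\theta}$ for every transversally holomorphic map $f:M\to N$, and if $k_{1}=0$ this forces $f^{\ast}\widetilde{G}_{\theta}\equiv 0$, hence $df_{H,\widetilde{H}}\equiv 0$, so $f$ is horizontally constant by Lemma~\ref{lemma5.1}. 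There is no genuine obstacle here: the substantive content is already contained in Theorem~\ref{theorem5.6}, and the only point needing attention is the elementary curvature-tensor identity of the previous paragraph, whose role is solely to make the separate lower bound on $K_{\sec}^{H}$ redundant when $m=1$.
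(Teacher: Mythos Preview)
Your proposal is correct and follows exactly the paper's approach: the paper simply notes that when $m=1$ the horizontal sectional curvature $K_{\sec}^{H}$ coincides with the pseudo-Hermitian sectional curvature, so the extra hypothesis in Theorem~\ref{theorem5.6} is automatic and the corollary follows immediately. Your more detailed justification of this identification via the curvature symmetries is a faithful expansion of that one-line remark.
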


Suppose $f:M\rightarrow N$ is a smooth map between two smooth manifolds of
the same dimension $2m+1$. We say that $f$ is degenerate at $x\in M$ if the
Jacobian of $f$ vanishes at $x$. Geometrically this means that $%
df_{x}:T_{x}M\rightarrow T_{f(x)}N$ is not univalent. The map $f$ is called
totally degenerate if its Jacobian vanishes identically.

\begin{theorem} \label{theorem5.8}
Let $(M,H,J,\theta )$ and $(N,\widetilde{H},\widetilde{%
J},\widetilde{\theta })$ be pseudo-Hermitian manifolds of the same dimension 
$2m+1$, with $M$ compact without boundary and $N$ Sasakian. Let $R_{M}$ be
the pseudo-Hermitian scalar curvature of $M$ and $Ric_{N}$ be the
pseudo-Hermitian Ricci curvature of $N$. Then for any transversally
holomorphic map $f:M\rightarrow N$, we have
\begin{enumerate}[(1)]
  \item If $R_{M}>0$, $Ric_{N}\leq 0$, then $f$ is totally degenerate;
  \item If $R_{M}<0$, $Ric_{N}\geq 0$, then there is a point $x_{0}$ of $M$ at
which $f$ is degenerate.
\end{enumerate}
\end{theorem}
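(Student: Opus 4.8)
The plan is to deduce both statements from the volume Bochner formula of Lemma~\ref{lemma3.3}, after first reinterpreting the degeneracy condition. Since $f$ is transversally holomorphic and $\dim M=\dim N=2m+1$, the conditions \eqref{equation3.3} and \eqref{equation3.30} make the differential act on the adapted frame by $df(\xi)=f_0^0\widetilde\xi$, $df(\eta_i)=f_i^0\widetilde\xi+\sum_\alpha f_i^\alpha\widetilde\eta_\alpha$ and $df(\eta_{\overline i})=f_{\overline i}^0\widetilde\xi+\sum_\alpha f_{\overline i}^{\overline\alpha}\widetilde\eta_{\overline\alpha}$. Expanding the resulting $(2m+1)\times(2m+1)$ determinant, the Jacobian of $f$ equals, up to a fixed nonzero constant, $f_0^0\cdot|\det(f_i^\alpha)|^2=f_0^0\,v$, where $v=f^\ast\Omega_N^H/\Omega_M^H=\Phi\overline\Phi\ge0$ is the horizontal volume ratio of \eqref{equation3.31}. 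Hence $f$ is totally degenerate as soon as $v\equiv0$, and if $f$ has no degenerate point then $v>0$ everywhere on $M$.

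Next I would feed the curvature hypotheses into Lemma~\ref{lemma3.3}. Write $\rho=\sum_{k,\gamma,\delta}\widetilde R_{\gamma\overline\delta}f_k^\gamma f_{\overline k}^{\overline\delta}=\sum_k Ric_N(W_k,\overline{W_k})$ with $W_k=\sum_\gamma f_k^\gamma\widetilde\eta_\gamma$, so that $Ric_N\le0$ forces $\rho\le0$ and $Ric_N\ge0$ forces $\rho\ge0$. Since $M$ is compact, $R_M$ is bounded away from $0$, say $R_M\ge c_0>0$ in case~(1) and $R_M\le-c_0<0$ in case~(2). Finally I would use the standard fact that the sub-Laplacian is a divergence, so $\int_M\triangle_b u\,\Omega_M=0$ for every $u\in C^\infty(M)$ on a closed pseudo-Hermitian manifold.

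For case~(1), Lemma~\ref{lemma3.3} gives $\tfrac12\triangle_b v=\sum_k|\Psi_k|^2+v(R_M-\rho)\ge c_0 v\ge0$, using $v\ge0$, $R_M\ge c_0$ and $-\rho\ge0$. Integrating over $M$ and using $\int_M\triangle_b v\,\Omega_M=0$ forces $\int_M v\,\Omega_M=0$, hence $v\equiv0$; therefore the Jacobian vanishes identically and $f$ is totally degenerate. For case~(2), suppose $f$ has no degenerate point; then $v>0$ everywhere, $\log v$ is smooth, and Lemma~\ref{lemma3.3} gives $\tfrac12\triangle_b\log v=R_M-\rho\le R_M\le-c_0<0$ at every point. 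Integrating yields $0=\int_M\triangle_b\log v\,\Omega_M\le-2c_0\,\mathrm{Vol}(M)<0$, a contradiction; hence $f$ must be degenerate at some $x_0\in M$.

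The only genuinely delicate point is the first step — correctly identifying the Jacobian of $f$ with a constant multiple of $f_0^0\,v$ from the holomorphicity and foliated conditions, and recording the sign conventions for $R_M$ and $Ric_N$ — together with invoking the divergence property of $\triangle_b$ on a closed manifold. Once these are in place, both cases are immediate from Lemma~\ref{lemma3.3} by integration; equivalently, one could argue by the maximum principle at a maximum point of $v$ (resp.\ of $\log v$), but the integral form is cleanest here.
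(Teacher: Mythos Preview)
Your proof is correct and rests on the same Bochner identity (Lemma~\ref{lemma3.3}) and the same Jacobian factorisation $\det(df)=f_0^0\,\Phi\overline\Phi$ as the paper. The difference is purely in the endgame: the paper argues by the maximum principle at an extremum of $v$, noting from \eqref{equation3.35} that $v_k(x_0)=0$ forces $\Psi_k(x_0)=0$ (since $\Phi(x_0)\neq0$), which kills the gradient term in \eqref{equation3.41} and gives $\tfrac12(\triangle_b v)_{x_0}=v(x_0)(R_M-\rho)$; the sign contradiction is then immediate for both cases, with (2) handled at a minimum of $v$ rather than via $\log v$. Your integral argument is equally clean and avoids tracking the gradient term pointwise, at the modest cost of invoking compactness a second time to get a uniform lower bound $R_M\ge c_0$ (or $\le -c_0$), whereas the paper only needs the sign of $R_M$ at the single extremal point. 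You yourself flag the maximum-principle alternative, so there is no real divergence here.
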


\begin{proof}
Clearly $\det \left( df\right) =f_{0}^{0}\Phi \overline{\Phi 
}$, where $\Phi =\det \left( f_{i}^{\alpha }\right) $. Set $v=\Phi \overline{%
\Phi }$. Since $M$ is compact, $v$ attains its maximum at a point $x_{0}\in
M $. If $v$ is not identically zeor, then $v(x_{0})>0$. At $x_{0}$, we have%
\begin{equation}
(v_{k})_{x_{0}}=0\text{ for }k=1,...,m,\text{ and }(\bigtriangleup
_{b}v)_{x_{0}}\leq 0.  \label{equation5.26}
\end{equation}%
Under the conditions in (1), we have from \eqref{equation3.41}, \eqref{equation3.35} and \eqref{equation5.26} that 
\begin{equation*}
\frac{1}{2}(\bigtriangleup _{b}v)_{x_{0}}=v(x_{0}) \left( R-\sum_{k,\gamma
,\delta }\widetilde{R}_{\gamma \overline{\delta }}f_{k}^{\gamma }f_{%
\overline{k}}^{\overline{\delta }}\right) > 0.
\end{equation*}%
This gives a contradiction. Thus (1) is proved. Similarly (2) can be proved
by the consideration of the minimum of $v$.
\end{proof}

Finally in this section, we would like to give a Schwarz type lemma for the
ratio of horizontal volume elements under a transversally holomorphic map.

\begin{theorem} \label{theorem5.9}
Let $(M^{2m+1},H,J,\theta )$ be a complete
pseudo-Hermitian manifold with pseudo-Hermitian Ricci curvature bounded from
below by $-mk_{1}$ ($k_{1}\geq 0$) and $\Vert A\Vert _{C^{1}\text{ }}$%
bounded from above. Let $(N,\widetilde{H},\widetilde{J},\widetilde{\theta })$
be a Sasakian manifold with the pseudo-Hermitian Ricci curvature bounded
from above by $-mk_{2}<0$ and the same dimension as $M$. Then for any
transversally holomorphic map $f:M\rightarrow N$, we have%
\begin{equation*}
f^{\ast }\Omega _{N}^{\widetilde{H}}\leq \left( \frac{k_{1}}{k_{2}}\right)
^{m}\Omega _{M}^{H}
\end{equation*}%
where $\Omega _{M}^{H}$ and $\Omega _{N}^{H}$ are the horizontal volume
elements of $M$ and $N$ respectively. In particular, if $k_{1}=0$, then $f$
is totally degenerate.
\end{theorem}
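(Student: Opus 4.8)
The plan is to follow the same maximum-principle scheme used in Theorem \ref{theorem5.6}, but applied to the function $v = \Phi\overline\Phi$ (the ratio of horizontal volume elements) rather than to the largest eigenvalue of $f^*G_{\widetilde\theta}$. The crucial input is the Bochner-type identity \eqref{equation3.41} from Lemma \ref{lemma3.3}, which for a transversally holomorphic map into a Sasakian $N$ of the same dimension reads
\begin{equation*}
\tfrac12\triangle_b v = \sum_k \Psi_k\overline{\Psi_k} + v\Bigl(R - \sum_{k,\gamma,\delta}\widetilde R_{\gamma\overline\delta}f_k^\gamma f_{\overline k}^{\overline\delta}\Bigr).
\end{equation*}
First I would bound the curvature term from below: the lower Ricci bound $R \ge -mk_1$ on $M$ controls the positive piece, while the upper Ricci bound $\widetilde{Ric}_N \le -mk_2 < 0$ on $N$, together with the fact that $\sum_\gamma \overline\gamma$ summing $\widetilde R_{\gamma\overline\delta}f_k^\gamma f_{\overline k}^{\overline\delta}$ pairs the target Ricci form against the pullback metric $\sum_k f_k^\gamma f_{\overline k}^{\overline\delta}$, gives $-\sum_{k,\gamma,\delta}\widetilde R_{\gamma\overline\delta}f_k^\gamma f_{\overline k}^{\overline\delta} \ge k_2\,\mathrm{tr}_H(f^*G_{\widetilde\theta})$. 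Then by the arithmetic–geometric mean inequality applied to the eigenvalues $\lambda_1,\dots,\lambda_m$ of $(\sum_\alpha f_i^\alpha f_{\overline j}^{\overline\alpha})$, one has $\mathrm{tr}_H(f^*G_{\widetilde\theta}) = \sum_i\lambda_i \ge m\bigl(\prod_i\lambda_i\bigr)^{1/m} = m\,v^{1/m}$. Combining these, and discarding the manifestly nonnegative gradient term $\sum_k|\Psi_k|^2$, yields the differential inequality
\begin{equation*}
\tfrac12\triangle_b v \ge v\bigl(-mk_1 + m k_2\, v^{1/m}\bigr) = mk_2\, v^{1+1/m} - mk_1\, v.
\end{equation*}

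Next I would run the localized maximum principle. Let $r$ be the Riemannian distance of $g_\theta$ from a fixed point $p$, set $B_p(a) = \{r < a\}$, and consider $\phi = (a^2 - r^2)^2 v$ on $B_p(a)$; since $\phi$ vanishes on $\partial B_p(a)$ it attains an interior maximum at some $x_0$ (assuming $v \not\equiv 0$, so $v(x_0) > 0$). At $x_0$ the first-order condition gives $\nabla^H v / v = -2\nabla^H(a^2-r^2)/(a^2-r^2)$ and the second-order condition, after the same manipulation as in \eqref{equation4.6}–\eqref{equation4.8}, produces
\begin{equation*}
\frac{\triangle_b v}{v} + 2\frac{\triangle_b(a^2-r^2)}{a^2-r^2} - 6\frac{|\nabla^H(a^2-r^2)|^2}{(a^2-r^2)^2} \le 0
\end{equation*}
at $x_0$. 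Now I would use $|\nabla^H(a^2-r^2)|^2 \le 4a^2$ and the sub-Laplacian comparison Lemma \ref{lemma4.1} (applicable since $\widetilde{Ric}$-type bounds on $M$ are assumed via the lower Ricci bound $-mk_1$ and $\|A\|_{C^1}$ bounded), which gives $\triangle_b r^2 \le C(1+a)$ with $C$ independent of $a$. Feeding the lower bound $\triangle_b v / v \ge 2mk_2\, v^{1/m} - 2mk_1$ into the inequality above and solving for $v(x_0)^{1/m}$ yields
\begin{equation*}
v(x_0)^{1/m} \le \frac{k_1}{k_2} + \frac{1}{2mk_2}\Bigl(\frac{C(1+a)}{a^2-r^2(x_0)} + \frac{24a^2}{(a^2-r^2(x_0))^2}\Bigr).
\end{equation*}
Multiplying through by $(a^2-r^2(x_0))^2$ and using that $\phi(x) \le \phi(x_0)$ for all $x \in B_p(a)$, then letting $a \to \infty$ with $x$ fixed, forces $v(x)^{1/m} \le k_1/k_2$ everywhere, i.e. $v \le (k_1/k_2)^m$, which is exactly $f^*\Omega_N^{\widetilde H} \le (k_1/k_2)^m\,\Omega_M^H$. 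When $k_1 = 0$ this gives $v \equiv 0$, hence $\Phi \equiv 0$, so $df_{H,\widetilde H}$ is everywhere singular, i.e. $f$ is totally degenerate.

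The step I expect to require the most care is the passage from the Ricci term $\sum_{k,\gamma,\delta}\widetilde R_{\gamma\overline\delta}f_k^\gamma f_{\overline k}^{\overline\delta}$ to a clean lower bound in terms of $v$. One must first observe that this sum is the trace (with respect to $G_\theta$) of the pullback of the pseudo-Hermitian Ricci form of $N$; diagonalizing $f^*G_{\widetilde\theta}$ by a unitary change of frame on $T_{1,0}M$, the upper bound $\widetilde{Ric}_N \le -mk_2$ translates into $\widetilde R_{\gamma\overline\delta}f_k^\gamma f_{\overline k}^{\overline\delta} \le -mk_2\lambda_k$ (no sum on $k$) only after a further application of the eigenvalue structure — more carefully, $-\sum_k\widetilde R_{\gamma\overline\delta}f_k^\gamma f_{\overline k}^{\overline\delta} \ge k_2\sum_k\lambda_k$ uses that $f^*\widetilde{Ric}$ is bounded by $k_2$ times the pullback metric on the image, and one needs to check the image direction carefully when $\dim N > \dim M$ is \emph{not} the case here (it is equality), so the image spans a full $m$-dimensional subspace of $T_{1,0}N$ wherever $\Phi \ne 0$. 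Granting that, the AM–GM step $\sum_i\lambda_i \ge m\,v^{1/m}$ is elementary, and the rest is the now-standard cutoff argument; the comparison-theorem hypotheses are verified exactly as in the proof of Theorem \ref{theorem4.2}.
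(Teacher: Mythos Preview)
Your overall strategy is exactly the paper's: start from the Bochner identity \eqref{equation3.41}, use the curvature bounds and the AM--GM/Hadamard inequality $\sum_i\lambda_i\ge m\,v^{1/m}$ to get a differential inequality of the form $\tfrac12\triangle_b v/v\ge -c_1+c_2\,v^{1/m}$, then run a Yau-type cutoff argument with the sub-Laplacian comparison Lemma~\ref{lemma4.1}. Two points need correction.

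First, the constants in your differential inequality are both off by a factor of $m$: since $R$ in \eqref{equation3.41} is the pseudo-Hermitian \emph{scalar} curvature one has $R\ge -m^2k_1$ (not $-mk_1$), and the Ricci upper bound on $N$ gives $-\sum_{k,\gamma,\delta}\widetilde R_{\gamma\overline\delta}f_k^\gamma f_{\overline k}^{\overline\delta}\ge mk_2\sum_{k,\alpha}|f_k^\alpha|^2$ (not $k_2$ times). These two slips happen to cancel in the final ratio $k_1/k_2$, so the conclusion survives, but the intermediate inequality should read $\tfrac12\triangle_b v/v\ge -m^2k_1+m^2k_2\,v^{1/m}$ as in \eqref{equation5.27}.

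Second, and more seriously, your cutoff $\phi=(a^2-r^2)^2 v$ does not match the nonlinearity $v^{1/m}$. The maximum-point computation gives you a bound on $v(x_0)^{1/m}$, but $\phi(x_0)=(a^2-r^2(x_0))^2 v(x_0)$, so ``multiplying through by $(a^2-r^2(x_0))^2$'' does not recover $\phi(x_0)$; for $m\ge 2$ you cannot control $\phi(x_0)$ uniformly in the location of $x_0$, and the passage to $a\to\infty$ breaks. The paper fixes this by taking $\phi=(a^2-r^2)^{2m}v$ (equation \eqref{equation5.28}): then $\phi^{1/m}=(a^2-r^2)^2 v^{1/m}$, the exponent on the cutoff exactly matches the power of $v$ produced by the differential inequality, and the same manipulation as in Theorems~\ref{theorem4.2} and~\ref{theorem5.2} yields $\phi(x_0)^{1/m}\le (k_1/k_2)a^4+O(a^3)$ uniformly, from which $v(x)\le(k_1/k_2)^m$ follows after $a\to\infty$. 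Replacing your exponent $2$ by $2m$ closes the argument; everything else in your proposal is correct.
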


\begin{proof}
We have already known that $f^{\ast }\Omega _{N}^{\widetilde{%
H}}=\Phi \overline{\Phi }\Omega _{M}^{H}$, where $\Phi =\det (f_{i}^{\alpha
})$. Set $v=\Phi \overline{\Phi }$. According to \eqref{equation3.41} and the curvature
assumptions, we have%
\begin{equation}
\frac{1}{2}\frac{\bigtriangleup _{b}v}{v}\geq -m^{2}k_{1}+mk_{2}\sum_{\alpha
,k}f_{k}^{\alpha }f_{\overline{k}}^{\overline{\alpha }} 
\geq -m^{2}k_{1}+m^{2}k_{2}v^{\frac{1}{m}},%
\label{equation5.27}
\end{equation}%
where the second inequality follows from the Hadamard's determinant
inequality for a $m\times m$ matrix, that is,%
\begin{equation*}
\frac{1}{m}\sum_{i,k}\mid a_{ik}\mid ^{2}\geq \mid \det (a_{ik})\mid ^{2/m}.
\end{equation*}%
Now we consider the following function on $B_{p}(a)$:%
\begin{equation}
\phi (x)=\left( a^{2}-r^{2}(x)\right) ^{2m}v(x)  \label{equation5.28}
\end{equation}%
where $r(x)$ is the Riemannian distance of $g_{\theta }$ relative to $p$. By
a similar argument for $\phi $ as in Theorems \ref{theorem4.2} and \ref{theorem5.2}, we may deduce that%
\begin{equation*}
\sup_{M}v\leq \left( \frac{k_{1}}{k_{2}}\right) ^{m}\text{.}
\end{equation*}%
Then this theorem is proved.
\end{proof}

\section{Pseudodistances and hyperbolicity} \label{section6}

We still consider strictly pseudoconvex \emph{CR} manifolds in this section,
although some notions and results in following hold true for more general 
\emph{CR} manifolds. Let $D^{3}(-1)$ denote a $3$-dimensional Sasakian space
form with pseudo-Hermitian curvature $-1$. We denote by $\rho _{CC}$ the
Carnot--Carath\'{e}odory distance of $D^{3}(-1)$. Now we may define a
pseudodistance $d_{M}^{K}$ for a strictly pseudoconvex \emph{CR} manifold $M$
as follows: For any two points $p,q\in M$, by a \emph{CR K-chain} between
them, we mean three groups of data: ${\big\{}\{p_{0},p_{1},...,p_{k}\},%
\{a_{1}$, $b_{1},...,a_{k},b_{k}\},\{f_{1},...,f_{k}\}{\big\}}$ ($%
k<\infty $), where $\{p_{0},p_{1},...,p_{k}\}$ is a finite set of points in $%
M$ with $p_{0}=p$ and $p_{k}=q$, $\{a_{1},b_{1},...,a_{k},b_{k}\}$ is a set
of points in $D^{3}(-1)$, and $\{f_{1},...,f_{k}\}$ is a set of \emph{CR}
maps of $D^{3}(-1)$ into $M$ such that $f_{i}(a_{i})=p_{i-1}$ and $%
f_{i}(b_{i})=p_{i}$ for $i=1,...,k$. Then we define a \emph{CR
K-pseduodistance} by 
\begin{equation}
d_{M}^{K}(p,q)=\inf \{\rho _{CC}(a_{1},b_{1})+\cdots +\rho
_{CC}(a_{k},b_{k})\}  \label{equation6.1}
\end{equation}%
where the infimum is taken for all possible \emph{CR }K-chains between $p$
and $q$. It is an easy matter to verify that $d_{M}^{K}:M\times M\rightarrow
R$ is continuous and satisfies the axioms for pseudodistance:%
\begin{equation}
d_{M}^{K}(p,q)\geq 0\text{, \ }d_{M}^{K}(p,q)=d_{M}^{K}(q,p)\text{, \ }%
d_{M}^{K}(p,q)+d_{M}^{K}(q,s)\geq d_{M}^{K}(p,s).  \label{equation6.2}
\end{equation}%
The following proposition follows immediately from the definition for $d^{K}$%
.

\begin{proposition} \label{proposition6.1}
Let $M$ and $N$ be two strictly pseudoconvex \emph{%
CR} manifolds. If $f:M\rightarrow N$ is a \emph{CR} map, then 
\begin{equation*}
d_{M}^{K}(p,q)\geq d_{N}^{K}(f(p),f(q)),
\end{equation*}%
Furthermore, if $f$ is a \emph{CR} diffeomorphism, then it is an isometry
with repect to the \emph{CR }K-pseudodistances, i.e., $%
d_{M}^{K}(p,q)=d_{N}^{K}(f(p),f(q))$ \ \ for $p,q\in M$.
\end{proposition}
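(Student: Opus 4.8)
The plan is to prove Proposition \ref{proposition6.1} directly from the definition \eqref{equation6.1} of the \emph{CR} $K$-pseudodistance, using the fact that \emph{CR} maps compose. First I would establish the monotonicity inequality $d_M^K(p,q) \geq d_N^K(f(p),f(q))$. The idea is that any \emph{CR} $K$-chain in $M$ from $p$ to $q$ can be pushed forward by $f$ to a \emph{CR} $K$-chain in $N$ from $f(p)$ to $f(q)$. Concretely, given a chain consisting of points $\{p_0,\dots,p_k\}$ in $M$ with $p_0=p$, $p_k=q$, points $\{a_1,b_1,\dots,a_k,b_k\}$ in $D^3(-1)$, and \emph{CR} maps $g_i:D^3(-1)\to M$ with $g_i(a_i)=p_{i-1}$, $g_i(b_i)=p_i$, I would set $q_i = f(p_i)$ and $h_i = f\circ g_i$. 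Since the composition of \emph{CR} maps is again a \emph{CR} map (this follows readily from Definition \ref{definition3.2}: the composite is $(H,\widetilde H)$-holomorphic because $J$-commutativity is preserved under composition, and horizontal because $df(H)\subset\widetilde H$ composes with $dg_i(H)\subset H$), the data $\{\{q_0,\dots,q_k\}, \{a_1,b_1,\dots,a_k,b_k\}, \{h_1,\dots,h_k\}\}$ is a legitimate \emph{CR} $K$-chain from $f(p)$ to $f(q)$ in $N$, and it uses \emph{the same} points $a_i,b_i$ in $D^3(-1)$. Hence
\[
\rho_{CC}(a_1,b_1)+\cdots+\rho_{CC}(a_k,b_k) \geq d_N^K(f(p),f(q))
\]
for every chain in $M$; taking the infimum over all such chains yields $d_M^K(p,q)\geq d_N^K(f(p),f(q))$.

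Next I would treat the diffeomorphism case. If $f$ is a \emph{CR} diffeomorphism, then $f^{-1}:N\to M$ is also a \emph{CR} map — this requires checking that the inverse of a \emph{CR} diffeomorphism is \emph{CR}, which follows because $df^{-1}$ is the pointwise inverse of $df$, so it intertwines $\widetilde J$ and $J$ on the horizontal distributions and maps $\widetilde H$ into $H$. Applying the monotonicity inequality just proved to $f^{-1}$ gives $d_N^K(f(p),f(q)) \geq d_M^K(f^{-1}(f(p)),f^{-1}(f(q))) = d_M^K(p,q)$. Combining this with the inequality in the other direction yields $d_M^K(p,q) = d_N^K(f(p),f(q))$, i.e.\ $f$ is an isometry with respect to the \emph{CR} $K$-pseudodistances.

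I do not expect any serious obstacle here; the proposition is essentially a formal consequence of the definition, as is the parallel statement for the Kobayashi pseudodistance in complex geometry. The one point that deserves a careful (if brief) remark is the closure of the class of \emph{CR} maps under composition and under inversion of diffeomorphisms; everything else is just unwinding \eqref{equation6.1} and \eqref{equation6.2}. One should also note in passing that the existence of \emph{at least one} \emph{CR} $K$-chain between any two points (so that the infimum is over a nonempty set and $d_M^K$ is finite) is implicitly part of the standing setup — but this is not needed for the inequality itself, since if no chain exists on the $M$ side the left-hand side is $+\infty$ by convention and the inequality is trivial.
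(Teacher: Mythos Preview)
Your proposal is correct and matches the paper's approach exactly: the paper does not give a detailed proof but simply states that the proposition ``follows immediately from the definition for $d^{K}$,'' which is precisely the push-forward-of-chains argument you have written out.
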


Considering our Schwarz tpye results about \emph{CR} maps with bounded
generalized dilatation in \S \ref{section4}, we would like to introduce a somewhat
restricted pseudodistance as follows. If $f:D^{3}(-1)\rightarrow M$ is a
nontrivial \emph{CR} map of bounded generalized dilatation $\Lambda $, then
we see from \eqref{equation4.4} that $\Lambda \geq 1/\sqrt{2}$. Therefore let us fixed a
positive nubmer $\Lambda \in \lbrack 1/\sqrt{2},\infty )$. We say that a 
\emph{CR }K-chain ${\big\{}\{p_{0},p_{1},...,p_{k}\},%
\{a_{1},b_{1},...,a_{k},b_{k}\}$, $\{f_{1},...,f_{k}\}{\big\}}$ is of
order $\Lambda $ if each $f_{i}$ ($i=1,...,k$) is a \emph{CR} map with
bounded generalized dilatation of order $\Lambda $. Replacing \emph{CR }%
K-chains in \eqref{equation6.1} by \emph{CR }K-chains of order $\Lambda $, one may
introduce a \emph{CR K-pseudodistance} of order $\Lambda $ by%
\begin{equation}
d_{M}^{K,\Lambda }(p,q)=\inf \{\rho _{CC}(a_{1},b_{1})+\cdots +\rho
_{CC}(a_{k},b_{k})\}  \label{equation6.3}
\end{equation}%
where the infimum is taken for all possible \emph{CR }K-chains of order $%
\Lambda $ between $p$ and $q$. Clearly $d_{M}^{K,\Lambda }$ also satisfies
the properties in \eqref{equation6.2}, and if $0<$ $\Lambda <\widetilde{\Lambda }$, then $%
d_{M}^{K,\Lambda }(p,q)\geq d_{M}^{K,\widetilde{\Lambda }}(p,q)\geq
d_{M}^{K}(p,q)$ for any $p,q\in M$. We don't know whether any two points of
a general pseudo-Hermitian manifold can be joint by a \emph{CR} chain (resp. 
\emph{CR K}-chain of order $\Lambda $). However, some special
pseudo-Hermitian manifolds, e.g., compact spherical pseudo-Hermitian
manifolds should have this property. Whenever any two points of $M$ can be
joint by a \emph{CR} chain (resp. a \emph{CR K}-chain of order $\Lambda $),
we will say that $d_{M}^{K}$ (resp. $d_{M}^{K,\Lambda }$) is well-defined.

\begin{definition} \label{definition6.1}
Let $M$ be a strictly pseudo-convex \emph{CR}
manifold. If $d_{M}^{K}$ is a well-defined distance, i.e., $d_{M}^{K}(p,q)>0$
for $p\neq q$, then $M$ is called a \emph{CR} Kobayashi hyperbolic manifold.
If $d_{M}^{K,\Lambda }$ is a well-defined distance for some positive number $%
\Lambda $, then $M$ is called a \emph{CR} Kobayashi hyperbolic manifold of
order $\Lambda $.
\end{definition}

According to Proposition \ref{proposition6.1}, the \emph{CR} Kobayashi hyperbolicity is
invariant under a \emph{CR} diffeomorphism. At the present status, we don't
know any example of \emph{CR} Kobayashi hyperbolic manifolds. However we
have the following

\begin{theorem} \label{theorem6.2}
Let $(M^{2m+1},H,J,\theta )$ be a pseudo-Hermitian
manifold in which any two points can be joint by a \emph{CR} K-chain of
order $\Lambda $. Suppose its pseudo-Hermitian sectional curvature is
bounded from above by a negative constant $-k$. Then $M$ is a \emph{CR}
Kobayashi hyperbolic manifold of order $\Lambda $.
\end{theorem}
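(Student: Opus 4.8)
The plan is to reduce the hyperbolicity of $M$ to a distance-decreasing estimate obtained from Theorem \ref{theorem4.2} (or more precisely Corollary \ref{corollary4.3}, since the model $D^{3}(-1)$ is $3$-dimensional), applied to each \emph{CR} map $f_{i}:D^{3}(-1)\to M$ appearing in a \emph{CR} K-chain of order $\Lambda$. First I would record that the source $D^{3}(-1)$ is a Sasakian space form: it is complete, its pseudo-Hermitian torsion vanishes (so $\|A\|_{C^{1}}=0$), and its pseudo-Hermitian (hence Ricci) curvature is the constant $-1$, so it satisfies the hypotheses of Corollary \ref{corollary4.3} with $k_{1}=1$. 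The target $M$ has pseudo-Hermitian sectional curvature bounded above by $-k<0$, so Corollary \ref{corollary4.3} gives, for each $f_{i}$ of bounded generalized dilatation of order $\Lambda$,
\begin{equation*}
f_{i}^{\ast}L_{\widetilde{\theta}}\ \leq\ \frac{8\Lambda^{2}+1}{k}\,L_{\theta_{0}}
\end{equation*}
where $\theta_{0}$ is the pseudo-Hermitian structure of $D^{3}(-1)$ and $L_{\widetilde\theta}$ that of $M$. (Here I am treating the horizontal pullback $f_{H,\widetilde H}^{\ast}$; since $f_i$ is a \emph{CR} map it is horizontal, so $df_i$ carries $H$ into $\widetilde H$ and the two notions agree.)

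Next I would translate this pointwise metric inequality into an inequality of Carnot--Carath\'eodory distances. Let $C=C(\Lambda,k):=\sqrt{(8\Lambda^{2}+1)/k}$. Given a horizontal curve $\gamma$ in $D^{3}(-1)$ joining $a_{i}$ to $b_{i}$, its image $f_{i}\circ\gamma$ is a horizontal curve in $M$ (horizontality of $f_i$), and the length estimate
\begin{equation*}
\int\sqrt{L_{\widetilde\theta}\big((f_i\circ\gamma)',(f_i\circ\gamma)'\big)}\,dt\ \leq\ C\int\sqrt{L_{\theta_0}(\gamma',\gamma')}\,dt
\end{equation*}
together with the definition \eqref{equation2.3} of $r_{CC}$ yields $r_{CC}^{M}(p_{i-1},p_{i})\leq C\,\rho_{CC}(a_{i},b_{i})$. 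Summing over $i=1,\dots,k$ and using the triangle inequality for $r_{CC}^{M}$ gives
\begin{equation*}
r_{CC}^{M}(p,q)\ \leq\ C\sum_{i=1}^{k}\rho_{CC}(a_{i},b_{i}),
\end{equation*}
and then taking the infimum over all \emph{CR} K-chains of order $\Lambda$ between $p$ and $q$ produces
\begin{equation*}
r_{CC}^{M}(p,q)\ \leq\ C\,d_{M}^{K,\Lambda}(p,q).
\end{equation*}
Since by hypothesis every pair of points of $M$ is joined by such a chain, $d_{M}^{K,\Lambda}$ is well-defined, and since $r_{CC}^{M}$ is a genuine distance on $M$ (the Carnot--Carath\'eodory metric is positive on distinct points, by Chow--Rashevsky), the inequality forces $d_{M}^{K,\Lambda}(p,q)>0$ whenever $p\neq q$. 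Hence $d_{M}^{K,\Lambda}$ is a distance and $M$ is \emph{CR} Kobayashi hyperbolic of order $\Lambda$, as claimed.

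The only delicate point, and the one I would be most careful about, is the passage from the infinitesimal estimate to the length/distance estimate: one must make sure that the a.e.-defined velocity of a Lipschitz horizontal curve is handled correctly (the pullback bound holds pointwise a.e., which is enough to integrate), and that the constant $C$ is genuinely independent of the chain --- this is where the uniformity built into ``bounded generalized dilatation of order $\Lambda$'' in Corollary \ref{corollary4.3} is essential, since $C$ depends only on $\Lambda$ and $k$ and not on the individual map $f_i$ or on the chain. Everything else is a routine concatenation-of-chains argument of exactly the type used for the classical Kobayashi pseudodistance. A minor remark: for $m\geq 2$ one could instead invoke Theorem \ref{theorem4.2} directly with the pseudo-Hermitian bisectional curvature, but since the model manifold is always $D^{3}(-1)$ it is the $3$-dimensional Corollary \ref{corollary4.3} that is the natural tool here.
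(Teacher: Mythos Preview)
Your proposal is correct and follows essentially the same route as the paper: apply Corollary \ref{corollary4.3} to each $f_i:D^3(-1)\to M$ in a \emph{CR} K-chain of order $\Lambda$, convert the Levi-form inequality into a Carnot--Carath\'eodory length inequality, and conclude that $d_M^{K,\Lambda}$ dominates a positive multiple of $r_{CC}^M$. The paper's proof is more terse (it jumps straight to the distance inequality \eqref{equation6.5} without writing out the chain-summation and triangle-inequality step), and it records the constant as $(8\Lambda^2+1)/k$ rather than your $\sqrt{(8\Lambda^2+1)/k}$; your square root is the correct constant coming from the quadratic form inequality, though of course either bound suffices for hyperbolicity.
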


\begin{proof}
Let $L^{D}$ denote the Levi metric on $D^{3}(-1)$ and let $%
L_{\theta }^{M}$ be the Levi metric of $M$. We find by Corollary \ref{corollary4.3} that
the following inequality holds 
\begin{equation}
f_{H,\widetilde{H}}^{\ast }L_{\theta }^{M}\leq \frac{8\Lambda ^{2}+1}{k}L^{D}
\label{equation6.4}
\end{equation}%
for every \emph{CR} map $f:D^{3}(-1)\rightarrow M$ which has bounded
generalized dilatation of order $\Lambda $. If we denote by $r_{CC}^{M}$ and 
$\rho _{CC}$ the \emph{CC}-distances on $M^{2m+1}$ and $D^{3}$ defined by $%
L_{\theta }^{M}$ and $L^{D}$ respectively, then \eqref{equation6.4} implies that $f$ is
distance-decreasing (up to a constant) with respect to $r_{CC}^{M}$ and $%
\rho _{CC}$, that is, 
\begin{equation}
r_{CC}^{M}(f(a),f(b))\leq \frac{8\Lambda ^{2}+1}{k}\rho _{CC}(a,b)  \label{equation6.5}
\end{equation}%
for any $a,b\in D^{3}(-1)$. Consequently we have 
\begin{equation*}
d_{M}^{K,\Lambda }(p,q)\geq \frac{k}{8\Lambda ^{2}+1}r_{CC}^{M}(p,q)>0
\end{equation*}%
if $p\neq q$.
\end{proof}

\begin{proposition} \label{proposition6.3}
For $D^{3}(-1)$, $d_{D}^{K,\Lambda }$ is
equivalent to $\rho _{CC}$ for any $\Lambda \geq \frac{\sqrt{2}}{2}$. In
particular, $D^{3}(-1)$ is a \emph{CR} Kobayashi hyperbolic manifold of
order $\Lambda $ for each $\Lambda \in \lbrack \frac{\sqrt{2}}{2},\infty )$.
\end{proposition}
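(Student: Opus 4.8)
The plan is to trap $d_{D}^{K,\Lambda}$ between two positive constant multiples of $\rho_{CC}$: the upper estimate will come from a single explicit \emph{CR} $K$-chain, and the lower estimate from the distance-decreasing inequality already used for Theorem \ref{theorem6.2}. The starting observation is that the identity map $\mathrm{id}\colon D^{3}(-1)\to D^{3}(-1)$ is a \emph{CR} map whose differential at each point is the identity of $T_{x}D^{3}(-1)$, so the eigenvalues of $(d\,\mathrm{id})^{t}(d\,\mathrm{id})$ are $\lambda_{1}=\lambda_{2}=\lambda_{3}=1$; the generalized dilatation condition \eqref{equation4.2} then becomes $1\le \Lambda^{2}(\lambda_{2}+\lambda_{3})=2\Lambda^{2}$, i.e.\ $\Lambda\ge \tfrac{\sqrt2}{2}$, which is exactly the standing hypothesis (and exactly the borderline case \eqref{equation4.4} for $l=3$). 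Hence for any $p,q\in D^{3}(-1)$ the one-link datum $\big(\{p,q\},\{p,q\},\{\mathrm{id}\}\big)$ is a \emph{CR} $K$-chain of order $\Lambda$ joining $p$ to $q$; in particular $d_{D}^{K,\Lambda}$ is well-defined on $D^{3}(-1)$, and inserting this chain into \eqref{equation6.3} gives $d_{D}^{K,\Lambda}(p,q)\le \rho_{CC}(p,q)$.

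For the reverse inequality I would invoke Theorem \ref{theorem6.2}, or rather its proof, with $M=D^{3}(-1)$. Since $m=1$, the pseudo-Hermitian sectional curvature of $D^{3}(-1)$ is just its constant pseudo-Hermitian curvature $-1$, so $D^{3}(-1)$ meets the curvature hypothesis of Theorem \ref{theorem6.2} with $k=1$, and the join-ability by \emph{CR} $K$-chains of order $\Lambda$ needed there has just been verified. Unwinding the argument: by Corollary \ref{corollary4.3}, applied with both source and target equal to $D^{3}(-1)$ (so $k_{1}=k_{2}=1$ and $A\equiv0$), every \emph{CR} map $f\colon D^{3}(-1)\to D^{3}(-1)$ of bounded generalized dilatation of order $\Lambda$ satisfies $f_{H,\widetilde H}^{\ast}L^{D}\le (8\Lambda^{2}+1)L^{D}$, hence contracts $\rho_{CC}$ up to the factor $8\Lambda^{2}+1$; summing this over the links of an arbitrary \emph{CR} $K$-chain of order $\Lambda$ and using the triangle inequality for $\rho_{CC}$ on the target gives $\rho_{CC}(p,q)\le (8\Lambda^{2}+1)\sum_{i}\rho_{CC}(a_{i},b_{i})$, and taking the infimum yields $\rho_{CC}(p,q)\le (8\Lambda^{2}+1)\,d_{D}^{K,\Lambda}(p,q)$.

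Combining the two estimates we obtain $\tfrac{1}{8\Lambda^{2}+1}\,\rho_{CC}\le d_{D}^{K,\Lambda}\le \rho_{CC}$ on $D^{3}(-1)$, which is the asserted equivalence; in particular $d_{D}^{K,\Lambda}(p,q)>0$ whenever $p\ne q$, so $D^{3}(-1)$ is a \emph{CR} Kobayashi hyperbolic manifold of order $\Lambda$ for every $\Lambda\in[\tfrac{\sqrt2}{2},\infty)$. I expect no serious obstacle here: the only point needing a little care is the verification that the identity map genuinely qualifies as a competing map, which is precisely the equality case of \eqref{equation4.4} and explains why the admissible range of $\Lambda$ opens exactly at $\tfrac{\sqrt2}{2}$; everything else is a direct combination of the Schwarz estimate of \S\ref{section4} with the triangle inequality.
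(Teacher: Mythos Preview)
Your proof is correct and follows essentially the same route as the paper: the upper bound $d_{D}^{K,\Lambda}\le \rho_{CC}$ comes from inserting the identity map (with generalized dilatation $1/\sqrt{2}$) as a one-link chain, and the lower bound $d_{D}^{K,\Lambda}\ge (8\Lambda^{2}+1)^{-1}\rho_{CC}$ comes from Corollary~\ref{corollary4.3} applied to each link, exactly as in the proof of Theorem~\ref{theorem6.2}. Your write-up is in fact slightly more explicit than the paper's in spelling out the eigenvalue computation for $\mathrm{id}$ and the triangle-inequality step.
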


\begin{proof}
Let $L^{D}$ denote the Levi metric on $D^{3}(-1)$. If $%
f:D^{3}(-1)\rightarrow D^{3}(-1)$ is a \emph{CR} map with bounded
generalized dilatation of order $\Lambda $, we have already known in
Corollary \ref{corollary4.3} that%
\begin{equation*}
f_{H,H}^{\ast }L^{D}\leq (8\Lambda ^{2}+1)L^{D}
\end{equation*}%
which implies that 
\begin{equation}
d_{D^{3}(-1)}^{K,\Lambda }(p,q)\geq \frac{1}{(8\Lambda ^{2}+1)}\rho
_{CC}(p,q)  \label{equation6.6}
\end{equation}%
for any $p,q\in D^{3}(-1)$. Clearly the identity transformation $%
id:D^{3}(-1)\rightarrow D^{3}(-1)$ is a \emph{CR} map with generalized
dilatation $1/\sqrt{2}$, so it has bounded generalized dilatation of order $%
\Lambda $ for any $\Lambda \geq 1/\sqrt{2}$. Therefore $d_{D^{3}(-1)}^{K,%
\Lambda }(p,q)\leq \rho _{CC}(p,q)$. We conclude that $d_{D^{3}(-1)}^{K,%
\Lambda }(p,q)$ and $\rho _{CC}$ are equivalent pseudodistances.
\end{proof}

In order to study some invariance of the pseudodistance $d^{K,\Lambda }$, we
recall the notion of quasi-isometry as follows. A Riemannian manifold $(M,g)$
is said to be \emph{quasi-isometric} to another Riemannian manifold $(N,%
\widetilde{g})$ if there is a diffeomorphism $\phi :M\rightarrow N$ and a
constant $C>0$ such that 
\begin{equation*}
C^{-1}g\leq \phi ^{\ast }\widetilde{g}\leq Cg.
\end{equation*}%
Clearly $\phi ^{-1}$ is also a quasi-isometry with the same coefficient of
expansion. Now we say that two pseudo-Hermitian manifolds $(M,H,J,\theta )$
and $(N,\widetilde{H},\widetilde{J},\widetilde{\theta })$ are \emph{%
CR-quasi-isometric} if there is a \emph{CR} diffeomorphism $\phi
:M\rightarrow N$ which is quasi-isometric with repect to the Webster metrics
on $M$ and $N$.

\begin{example} \label{example6.1}
Let $(M,H,J,\theta )$ be a pseudo-Hermitian manifold.
Set $\widetilde{\theta }=e^{\varphi }\theta $ for some $\varphi \in
C^{\infty }(M)$ (or equivalently, $\theta =e^{-\varphi }\widetilde{\theta }$%
). Then 
\begin{align*}
g_{\widetilde{\theta }} =\widetilde{\theta }\otimes \widetilde{\theta }+G_{%
\widetilde{\theta }} = e^{2\varphi }\theta \otimes \theta (\cdot ,\cdot )+e^{\varphi }(d\varphi
\wedge \theta )(\cdot ,J\cdot )+e^{\varphi }d\theta (\cdot ,J\cdot ).
\end{align*}%
Writing $d\varphi =\varphi _{0}\theta +\varphi _{k}\theta ^{k}+\varphi _{%
\overline{k}}\theta ^{\overline{k}}$ in terms of the coframe field in \S \ref{section3},
it is easy to derive that if $\parallel \varphi \parallel _{C^{1}}$ is
bounded from above, then there exists a constant $C>1$ such that $g_{%
\widetilde{\theta }}\leq Cg_{\theta }$, and thus $g_{\theta }\leq Cg_{%
\widetilde{\theta }}$ too. Hence we find that $id:(M,H,J,\theta )\rightarrow
(M,H,J,\widetilde{\theta })$ is a \emph{CR} quasi-isometry.
\end{example}

\begin{lemma} \label{lemma6.4}
Let $(U^{n},g_{U})$, $(V^{m},g_{V})$ and $(W^{m},g_{W})\ 
$be Euclidean vector spaces. Let $A:U\rightarrow V$ be a linear map with
generalized dilatation of order $\Lambda $. Suppose $B:V\rightarrow W$ is a
linear isomorphism, and $\mu _{1}\geq \mu _{2}\geq \cdots \geq \mu _{m}>0$
are the eigenvalues of $B^{t}B$. Then $B\circ A:V\rightarrow W$ is a linear
map with generalized dilatation of order $\sqrt{\frac{\mu _{1}}{\mu _{m}}}%
\Lambda $.
\end{lemma}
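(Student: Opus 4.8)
The plan is to reduce the statement to two elementary sandwich estimates coming from the isomorphism $B$. Since $B^{t}B$ is symmetric and positive definite with extreme eigenvalues $\mu_{1}\geq\mu_{m}>0$, for every $v\in V$ we have $\mu_{m}\lVert v\rVert^{2}\leq\lVert Bv\rVert^{2}=\langle B^{t}Bv,v\rangle\leq\mu_{1}\lVert v\rVert^{2}$. I would first unwind the definition of ``generalized dilatation of order $\Lambda$'': writing $\lambda_{1}\geq\cdots\geq\lambda_{n}\geq 0$ for the eigenvalues of $A^{t}A$ (so $\lambda_{1}=\lVert A\rVert_{\mathrm{op}}^{2}$ and $\sum_{k}\lambda_{k}=\operatorname{tr}(A^{t}A)=\sum_{i}\lVert Ae_{i}\rVert^{2}$ for any orthonormal basis $\{e_{i}\}$ of $U$), the hypothesis on $A$ is exactly $\lambda_{1}\leq\Lambda^{2}\sum_{k=2}^{n}\lambda_{k}$. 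Set $M=(B\circ A)^{t}(B\circ A)=A^{t}B^{t}BA$ on $U$, let $\nu_{1}\geq\cdots\geq\nu_{n}\geq 0$ be its eigenvalues; the goal is $\nu_{1}\leq\tfrac{\mu_{1}}{\mu_{m}}\Lambda^{2}\sum_{k=2}^{n}\nu_{k}$. We may assume $A\neq 0$, since otherwise $B\circ A=0$ and both sides vanish; and when $A\neq 0$ the hypothesis forces $\sum_{k\geq 2}\lambda_{k}>0$, so all quantities below are positive.

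The key point is to choose an orthonormal basis of $U$ adapted to $M$ rather than to $A^{t}A$. Let $v$ be a unit top eigenvector of $M$ and complete it to an orthonormal basis $\{e_{1}=v,e_{2},\dots,e_{n}\}$ of $U$. First, using the upper comparison for $B$ and $\lVert Av\rVert^{2}=\langle A^{t}Av,v\rangle\leq\lambda_{1}$, one gets $\nu_{1}=\langle Mv,v\rangle=\lVert BAv\rVert^{2}\leq\mu_{1}\lVert Av\rVert^{2}\leq\mu_{1}\lambda_{1}$. Second, since $\nu_{1}$ is the largest eigenvalue of $M$, $\sum_{k=2}^{n}\nu_{k}=\operatorname{tr}(M)-\nu_{1}=\sum_{i=2}^{n}\lVert BAe_{i}\rVert^{2}\geq\mu_{m}\sum_{i=2}^{n}\lVert Ae_{i}\rVert^{2}=\mu_{m}\bigl(\operatorname{tr}(A^{t}A)-\lVert Av\rVert^{2}\bigr)\geq\mu_{m}\bigl(\sum_{k}\lambda_{k}-\lambda_{1}\bigr)=\mu_{m}\sum_{k=2}^{n}\lambda_{k}$, where I used the lower comparison for $B$ and again $\lVert Av\rVert^{2}\leq\lambda_{1}$. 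Combining the two displays with the hypothesis on $A$ yields
\[
\nu_{1}\leq\mu_{1}\lambda_{1}\leq\mu_{1}\Lambda^{2}\sum_{k=2}^{n}\lambda_{k}\leq\frac{\mu_{1}}{\mu_{m}}\Lambda^{2}\sum_{k=2}^{n}\nu_{k},
\]
which is precisely the assertion that $B\circ A$ has generalized dilatation of order $\sqrt{\mu_{1}/\mu_{m}}\,\Lambda$.

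The one place where a little care is needed — and the reason a cruder argument (``$\nu_{1}\leq\mu_{1}\lambda_{1}$ and $\operatorname{tr}M\geq\mu_{m}\operatorname{tr}(A^{t}A)$'') does not close the estimate — is the lower bound on $\sum_{k\geq 2}\nu_{k}$: since passing from $A^{t}A$ to $M$ can change which vector realizes the top eigenvalue, one cannot simply compare $\sum_{k\geq 2}\nu_{k}$ with $\sum_{k\geq 2}\lambda_{k}$ termwise. Using the $M$-adapted basis above circumvents this, because subtracting exactly $\lVert Av\rVert^{2}$ (which is still $\leq\lambda_{1}$) from $\operatorname{tr}(A^{t}A)$ leaves at least $\sum_{k\geq 2}\lambda_{k}$. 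Apart from this bookkeeping step, everything is just the two $B$-sandwich inequalities applied to the vectors $Ae_{i}$.
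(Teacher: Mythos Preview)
Your proof is correct and follows essentially the same approach as the paper's: both pick an orthonormal basis of $U$ adapted to $(BA)^{t}(BA)$ (the paper uses a full eigenbasis, you only the top eigenvector completed arbitrarily), bound $\nu_{1}\leq\mu_{1}\lambda_{1}$ via the operator norms, and bound $\sum_{k\geq 2}\nu_{k}\geq\mu_{m}\sum_{k\geq 2}\lambda_{k}$ by subtracting $\lVert Av\rVert^{2}\leq\lambda_{1}$ from the trace. Your explicit remark about why the $M$-adapted basis is needed is a nice clarification of a step the paper leaves implicit.
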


\begin{proof}
Let $\lambda _{1}\geq \lambda _{2}\geq \cdots \geq \lambda
_{n}\geq 0$ be the eigenvalues of $A^{t}A$ and let $\gamma _{1}\geq \gamma
_{2}\geq \cdots \geq \gamma _{n}$ be the eigenvalues of $\left( BA\right)
^{t}(BA)$. Let $u_{i}$ ($1\leq i\leq n$) be an orthonormal basis in $U$ such
that $u_{i}$ is an eigenvector of $\left( BA\right)^{t}(BA)$ corresponding to $\gamma _{i}$ ($i=1,2,...,n$%
). Clearly we have%
\begin{equation}
\gamma _{1}=\max_{\Vert u\Vert =1}\parallel BAu\parallel ^{2}\leq \mu
_{1}\lambda _{1}  \label{equation6.7}
\end{equation}%
and%
\begin{equation}
\begin{aligned}
\gamma _{2}+\cdots +\gamma _{n}
& =\sum_{k=2}^{n}\langle \left( BA\right)^{t}\circ \left( BA\right) u_{k},u_{k}\rangle =\sum_{k=2}^{n}\parallel BAu_{k}\parallel ^{2} \\ 
& \geq \mu _{m}\sum_{k=2}^{n}\parallel Au_{k}\parallel ^{2} =\mu _{m}\left( \sum_{k=1}^{n}\parallel Au_{k}\parallel ^{2}-\parallel
Au_{1}\parallel ^{2}\right) \\ 
& \geq \mu _{m}\left( \lambda _{2}+\cdots +\lambda _{m}\right)%
\end{aligned}%
\label{equation6.8}
\end{equation}%
The assumption for $A$ yields%
\begin{equation}
\lambda _{2}+\cdots +\lambda _{m}\geq \frac{1}{\Lambda ^{2}}\lambda _{1}. 
\label{equation6.9}
\end{equation}%
It follows from \eqref{equation6.7}, \eqref{equation6.8} and \eqref{equation6.9} that%
\begin{equation*}
\gamma _{1}\leq \frac{\mu _{1}\Lambda ^{2}}{\mu _{m}}\left( \gamma
_{2}+\cdots +\gamma _{n}\right) .
\end{equation*}%
This shows that $B\circ A:U\rightarrow W$ has the generalized dilatation of
order $\sqrt{\frac{\mu _{1}}{\mu _{m}}}\Lambda $.
\end{proof}

In terms of Lemma \ref{lemma6.4}, we immediately get

\begin{proposition} \label{proposition6.5}
Let $f:(M,H,J,\theta )\rightarrow (N,\widetilde{H},%
\widetilde{J},\widetilde{\theta })$ be a \emph{CR} map with bounded
generalized dilatation of order $\Lambda $. Suppose $h:(N,\widetilde{H},%
\widetilde{J},\widetilde{\theta })\rightarrow (W,\widehat{H},\widehat{J},%
\widehat{\theta })$ is a \emph{CR}-quasi-isometry satisfying $C^{-1}g_{%
\widetilde{\theta }}\leq h^{\ast }g_{\widehat{\theta }}\leq Cg_{\widetilde{%
\theta }}$ for some constant $C\geq 1$. Then $h\circ f$ is a \emph{CR} map
with bounded generalized dilatation of order $C\Lambda $.
\end{proposition}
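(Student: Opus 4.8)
The plan is to reduce the statement to a pointwise linear-algebra fact and then invoke Lemma \ref{lemma6.4}. First I would check that $h\circ f$ is again a \emph{CR} map in the sense of Definition \ref{definition3.2}. Since $f$ is horizontal, $d(h\circ f)(H)=dh\bigl(df(H)\bigr)\subseteq dh(\widetilde{H})\subseteq\widehat{H}$, so $h\circ f$ is horizontal; moreover horizontality of $f$ and of $h$ makes $df_{H,\widetilde{H}}=df|_{H}$ and $dh_{\widetilde{H},\widehat{H}}=dh|_{\widetilde{H}}$, so that $d(h\circ f)_{H,\widehat{H}}=dh_{\widetilde{H},\widehat{H}}\circ df_{H,\widetilde{H}}$ commutes with the complex structures because each factor does. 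Hence $h\circ f$ is $(H,\widehat{H})$-holomorphic, i.e. a \emph{CR} map.

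Next I would fix $x\in M$ and set $A=df_{x}\colon(T_{x}M,g_{\theta})\to(T_{f(x)}N,g_{\widetilde{\theta}})$ and $B=dh_{f(x)}\colon(T_{f(x)}N,g_{\widetilde{\theta}})\to(T_{h(f(x))}W,g_{\widehat{\theta}})$, so that $d(h\circ f)_{x}=B\circ A$. Since $h$ is a diffeomorphism, $B$ is a linear isomorphism between Euclidean spaces of the same dimension $\dim N$, and $A$ has generalized dilatation of order $\Lambda$ by hypothesis, so Lemma \ref{lemma6.4} applies and yields that $B\circ A$ has generalized dilatation of order $\sqrt{\mu_{1}/\mu_{m}}\,\Lambda$, where $\mu_{1}\geq\cdots\geq\mu_{m}>0$ are the eigenvalues of $B^{t}B$ (the transpose being computed with respect to $g_{\widetilde{\theta}}$ and $g_{\widehat{\theta}}$).

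Then it remains to bound $\mu_{1}/\mu_{m}$ by $C^{2}$. The point is that for $v\in T_{f(x)}N$ one has $g_{\widehat{\theta}}(Bv,Bv)=(h^{\ast}g_{\widehat{\theta}})(v,v)=g_{\widetilde{\theta}}(B^{t}Bv,v)$, so the quasi-isometry hypothesis $C^{-1}g_{\widetilde{\theta}}\leq h^{\ast}g_{\widehat{\theta}}\leq Cg_{\widetilde{\theta}}$ becomes, after taking Rayleigh quotients of the symmetric positive-definite operator $B^{t}B$, the spectral bound $C^{-1}\leq\mu_{m}\leq\mu_{1}\leq C$. Hence $\sqrt{\mu_{1}/\mu_{m}}\leq C$ at every $x$, so $d(h\circ f)_{x}$ has generalized dilatation $\leq C\Lambda$ there; by Definition \ref{definition4.1} together with the monotonicity recorded in Remark \ref{remark4.2}, this is exactly the assertion that $h\circ f$ has bounded generalized dilatation of order $C\Lambda$.

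The argument is elementary throughout, so I do not expect a serious obstacle; the only steps requiring a little care are the translation of the tensorial inequality defining a quasi-isometry into the spectral bound for $B^{t}B$, and the bookkeeping that confirms $h\circ f$ is a genuine \emph{CR} map (horizontal and holomorphic), not merely $(H,\widehat{H})$-holomorphic.
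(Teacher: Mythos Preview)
Your proposal is correct and follows exactly the approach of the paper, which simply states that the proposition follows immediately from Lemma \ref{lemma6.4}; you have merely filled in the details that the paper omits, namely the verification that $h\circ f$ is a \emph{CR} map and the translation of the quasi-isometry bound $C^{-1}g_{\widetilde{\theta}}\leq h^{\ast}g_{\widehat{\theta}}\leq Cg_{\widetilde{\theta}}$ into the spectral estimate $\mu_{1}/\mu_{m}\leq C^{2}$ for $B^{t}B$.
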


\begin{theorem} \label{theorem6.6}
Let $h:(M,H,J,\theta )\rightarrow (N,\widetilde{H},%
\widetilde{J},\widetilde{\theta })$ be a \emph{CR}-quasi-isometry satisfying 
$C^{-1}g_{\widetilde{\theta }}\leq h^{\ast }g_{\widehat{\theta }}\leq Cg_{%
\widetilde{\theta }}$ for some constant $C\,\geq 1$. Suppose $%
d_{M}^{K,C\Lambda }$, $d_{M}^{K,\Lambda /C}$ and $d_{N}^{K,\Lambda }$ are
well-defined for some $\Lambda >0$. Then%
\begin{equation*}
d_{M}^{K,C\Lambda }(p,q)\leq d_{N}^{K,\Lambda }(h(p),h(q))\leq
d_{M}^{K,\Lambda /C}(p,q)
\end{equation*}%
for any $p,q\in M$.
\end{theorem}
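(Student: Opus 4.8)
The plan is to use the key functorial property of the generalized dilatation under post-composition with a quasi-isometry, which is exactly Proposition 6.5, applied in both directions. First I would prove the right-hand inequality. Take any $p,q\in M$ and let $\big\{\{p_0,\dots,p_k\},\{a_1,b_1,\dots,a_k,b_k\},\{f_1,\dots,f_k\}\big\}$ be an arbitrary \emph{CR} K-chain of order $\Lambda/C$ between $p$ and $q$, so each $f_i:D^3(-1)\to M$ is a \emph{CR} map with bounded generalized dilatation of order $\Lambda/C$. Since $h:M\to N$ is a \emph{CR}-quasi-isometry with $C^{-1}g_{\widetilde\theta}\le h^\ast g_{\widehat\theta}\le Cg_{\widetilde\theta}$ (note the ambient metric is the Webster metric of $M$; I will write it as $g_\theta$), Proposition 6.5 tells us that each $h\circ f_i:D^3(-1)\to N$ is a \emph{CR} map with bounded generalized dilatation of order $C\cdot(\Lambda/C)=\Lambda$. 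The points $h(p_0),\dots,h(p_k)$ form a finite set in $N$ with $h(p_0)=h(p)$, $h(p_k)=h(q)$, and $h\circ f_i(a_i)=h(p_{i-1})$, $h\circ f_i(b_i)=h(p_i)$; hence $\big\{\{h(p_i)\},\{a_i,b_i\},\{h\circ f_i\}\big\}$ is a \emph{CR} K-chain of order $\Lambda$ between $h(p)$ and $h(q)$, with exactly the same string of distances $\rho_{CC}(a_i,b_i)$ in $D^3(-1)$. Taking the infimum over all \emph{CR} K-chains of order $\Lambda/C$ between $p$ and $q$ therefore yields $d_N^{K,\Lambda}(h(p),h(q))\le d_M^{K,\Lambda/C}(p,q)$.

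For the left-hand inequality I would run the same argument with the quasi-isometry $h^{-1}:N\to M$ in place of $h$. Since $h$ is a \emph{CR} diffeomorphism that is quasi-isometric with coefficient $C$, its inverse $h^{-1}$ is also a \emph{CR}-quasi-isometry with the same coefficient $C$ (this is noted just before Example 6.1: $\phi^{-1}$ is a quasi-isometry with the same coefficient of expansion, and it is clearly again \emph{CR}). Now let $\big\{\{q_0,\dots,q_k\},\{a_i,b_i\},\{g_1,\dots,g_k\}\big\}$ be any \emph{CR} K-chain of order $\Lambda$ between $h(p)$ and $h(q)$ in $N$. By Proposition 6.5 applied to $h^{-1}$, each $h^{-1}\circ g_i:D^3(-1)\to M$ is a \emph{CR} map with bounded generalized dilatation of order $C\Lambda$, and $\big\{\{h^{-1}(q_i)\},\{a_i,b_i\},\{h^{-1}\circ g_i\}\big\}$ is a \emph{CR} K-chain of order $C\Lambda$ between $h^{-1}(h(p))=p$ and $h^{-1}(h(q))=q$, again with the same distances in $D^3(-1)$. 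Taking the infimum over all such chains gives $d_M^{K,C\Lambda}(p,q)\le d_N^{K,\Lambda}(h(p),h(q))$. Combining the two inequalities completes the proof.

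The only point requiring a little care — and the thing I would regard as the main (minor) obstacle — is bookkeeping the order of the chains versus the coefficient $C$: post-composing an order-$(\Lambda/C)$ chain with a coefficient-$C$ quasi-isometry produces an order-$\Lambda$ chain (so the middle term controls $d_M^{K,\Lambda/C}$ from above), while post-composing an order-$\Lambda$ chain with $h^{-1}$ produces an order-$C\Lambda$ chain (so $d_M^{K,C\Lambda}$ is controlled from above). This is precisely why the three pseudodistances appearing in the statement carry the three different orders $C\Lambda$, $\Lambda$, $\Lambda/C$, and why the hypothesis demands that all three be well-defined. Everything else is a direct unwinding of the definition \eqref{equation6.3} of $d_M^{K,\Lambda}$ together with the observation that composing chains with $h$ or $h^{-1}$ leaves the endpoints $a_i,b_i\in D^3(-1)$, and hence the sums $\sum_i\rho_{CC}(a_i,b_i)$, completely unchanged.
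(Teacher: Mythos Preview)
Your proposal is correct and follows essentially the same approach as the paper: push a \emph{CR} K-chain of order $\Lambda/C$ forward along $h$ via Proposition~\ref{proposition6.5} to get the right-hand inequality, then apply the same argument to $h^{-1}$ (which is again a \emph{CR}-quasi-isometry with the same coefficient) to get the left-hand inequality. The paper in fact abbreviates the second step to a single line, whereas you spell out the chain explicitly; there is no substantive difference.
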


\begin{proof}
Suppose ${\big\{}\{p_{0},p_{1},...,p_{k}\},%
\{a_{1},b_{1},...,a_{k},b_{k}\}$, $\{f_{1},...,f_{k}\}{\big\}}$ is a 
\emph{CR} K-chain of order $\Lambda /C$ joining $p$ and $q$ in $M$.
According to Proposition \ref{proposition6.5}, we find that 
\begin{align*}
{\big\{}%
\{h(p_{0}),h(p_{1}),...,h(p_{k})\},\{a_{1},b_{1},...,a_{k},b_{k}\},
\{h\circ f_{1},...,h\circ f_{k}\}{\big\}}
\end{align*} 
is a \emph{CR} K-chain of
order $\Lambda $ joining $h(p)$ and $h(q)$ in $N$. By the definition of $%
d^{K,\Lambda }$, we have%
\begin{equation}
d_{N}^{K,\Lambda }(h(p),h(q))\leq d_{M}^{K,\Lambda /C}(p,q)\text{.} 
\label{equation6.10}
\end{equation}%
Since $h^{-1}$ is also a \emph{CR} quasi-isometry with the same coefficient
of expansion, we get the following inequality by applying \eqref{equation6.10} to $h^{-1}$%
\begin{equation}
d_{M}^{K,C\Lambda }(p,q)\leq d_{N}^{K,\Lambda }(h(p),h(q)).  \label{equation6.11}
\end{equation}%
This lemma follows from \eqref{equation6.10} and \eqref{equation6.11}. 
\end{proof}

\begin{corollary} \label{corollary6.7}
Let $h:(M,H,J,\theta )\rightarrow (N,\widetilde{H},%
\widetilde{J},\widetilde{\theta })$ be a \emph{CR}-quasi-isometry satisfying 
$C^{-1}g_{\widetilde{\theta }}\leq h^{\ast }g_{\widehat{\theta }}\leq Cg_{%
\widetilde{\theta }}$ for some constant $C\geq 1$. Suppose $M$ is a \emph{CR}
Kobayashi hyperbolic manifold of order $C\Lambda $ and $d_{N}^{K,\Lambda }$
is well-defined. Then $(N,\widetilde{H},\widetilde{J},\widetilde{\theta })$
is a \emph{CR} Kobayashi hyperbolic manifold of order $\Lambda $. In
particular, if there exists a positive number $\Lambda _{0}$ such that $M$
is a \emph{CR} Kobayashi hyperbolic manifold of order $\widetilde{\Lambda }$
for any $\widetilde{\Lambda }\geq \Lambda _{0}$, then $N$ is a \emph{CR}
Kobayashi hyperbolic manifold of order $\Lambda $ for any $\Lambda \in
\lbrack \max \{\sqrt{2}/2,\Lambda _{0}/C\},\infty )$.
\end{corollary}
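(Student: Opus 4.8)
The plan is to obtain Corollary~\ref{corollary6.7} directly from Theorem~\ref{theorem6.6}, which already records how the two $K$-pseudodistances of order $\Lambda$ compare under a CR-quasi-isometry. For the first assertion, observe that the hypothesis that $M$ be a CR Kobayashi hyperbolic manifold of order $C\Lambda$ means precisely that $d_{M}^{K,C\Lambda}$ is a well-defined distance, so in particular $d_{M}^{K,C\Lambda}(p,q)>0$ for every pair of distinct points $p,q\in M$. Since $d_{N}^{K,\Lambda}$ is assumed well-defined too, the first of the two inequalities in Theorem~\ref{theorem6.6} applies and gives
\[
d_{N}^{K,\Lambda}\big(h(p),h(q)\big)\;\ge\;d_{M}^{K,C\Lambda}(p,q)\;>\;0
\]
for all $p\neq q$ in $M$. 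Because $h$ is a CR diffeomorphism it is in particular a bijection of $M$ onto $N$, so every pair of distinct points of $N$ has the form $(h(p),h(q))$ with $p\neq q$; hence $d_{N}^{K,\Lambda}$ is positive off the diagonal, which is exactly the assertion that $N$ is a CR Kobayashi hyperbolic manifold of order $\Lambda$.

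For the ``in particular'' clause, fix $\Lambda\in[\max\{\sqrt{2}/2,\ \Lambda_{0}/C\},\infty)$. From $\Lambda\ge\Lambda_{0}/C$ we get $C\Lambda\ge\Lambda_{0}$, so by the standing assumption $M$ is a CR Kobayashi hyperbolic manifold of order $C\Lambda$ --- precisely the hypothesis needed to invoke the first assertion. It then only remains to know that $d_{N}^{K,\Lambda}$ is well-defined, i.e.\ that any two points of $N$ can be joined by a CR $K$-chain of order $\Lambda$; the constraint $\Lambda\ge\sqrt{2}/2$ enters exactly here, since by \eqref{equation4.4} with $l=\dim D^{3}(-1)=3$ every nontrivial CR map $D^{3}(-1)\to N$ has generalized dilatation $\ge 1/\sqrt{2}$, so a CR $K$-chain of order $\Lambda<\sqrt{2}/2$ cannot exist at all. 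Granting the well-definedness of $d_{N}^{K,\Lambda}$, the first assertion yields the conclusion for this $\Lambda$, and since $\Lambda$ was arbitrary in the stated range we are done.

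There is essentially no computation; the one point that genuinely needs care is the bookkeeping guaranteeing that $d_{N}^{K,\Lambda}$ is well-defined in the last clause. The natural route is to produce a CR $K$-chain in $N$ joining $q_{1},q_{2}$ by pulling the endpoints back to $p_{i}=h^{-1}(q_{i})$, joining them in $M$ by a CR $K$-chain of a suitably small order (available because $M$ is assumed hyperbolic of every order $\ge\Lambda_{0}$), and pushing the constituent maps $f_{i}\colon D^{3}(-1)\to M$ forward to $h\circ f_{i}$, using Proposition~\ref{proposition6.5} to control how the dilatation order is amplified by $h$. I expect this transport-of-chains step --- and the precise interplay between the amplification factor $C$ and the admissible range of $\Lambda$ --- to be where the argument needs attention, whereas the positivity of $d_{N}^{K,\Lambda}$ is immediate from Theorem~\ref{theorem6.6}.
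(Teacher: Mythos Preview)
Your approach is exactly the paper's: Corollary~\ref{corollary6.7} is stated there without proof, as an immediate consequence of the left-hand inequality in Theorem~\ref{theorem6.6}, and that is precisely what you do. Your derivation of the first assertion is correct and complete.

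One remark on the ``in particular'' clause. You rightly isolate the well-definedness of $d_{N}^{K,\Lambda}$ as the only nontrivial point, and you propose to manufacture CR $K$-chains in $N$ by taking chains in $M$ and pushing forward through $h$. Note, however, that the hypothesis only supplies chains in $M$ of order $\ge\Lambda_{0}$; by Proposition~\ref{proposition6.5} these push forward to chains in $N$ of order $C\Lambda_{0}$, hence of any order $\Lambda\ge C\Lambda_{0}$. This does not directly cover the full interval $[\max\{\sqrt{2}/2,\Lambda_{0}/C\},\infty)$ claimed in the statement when $C>1$, since $\Lambda_{0}/C<C\Lambda_{0}$. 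The paper does not address this point either (no proof is given), so your caution here is warranted; your argument is sound on the range $\Lambda\ge\max\{\sqrt{2}/2,\,C\Lambda_{0}\}$, and matches whatever the authors had in mind.
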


Using Theorem \ref{theorem6.2}, Proposition \ref{proposition6.3} and Theorem \ref{theorem6.6}, we may construct many
examples of \emph{CR} Kobayashi manifolds of finite order through \emph{CR}
quasi-isometries. For example, we have

\begin{corollary} \label{corollary6.8}
Suppose $(M^{3},H,J,\theta )$ is \emph{CR}
quasi-isometric to $D^{3}(-1)$. Then $M^{3}$ is a \emph{CR} Kobayashi
hyperbolic manifold of order $\Lambda $ for any $\Lambda \in \lbrack \sqrt{2}%
/2,\infty )$.
\end{corollary}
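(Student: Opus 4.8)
The plan is to obtain Corollary~\ref{corollary6.8} as a direct application of Proposition~\ref{proposition6.3} (hyperbolicity of $D^{3}(-1)$) together with the quasi-isometry invariance packaged in Corollary~\ref{corollary6.7}; no new geometry is involved, only a careful matching of the threshold constants. First I would unwind the hypothesis: ``$M^{3}$ is CR quasi-isometric to $D^{3}(-1)$'' means there are a CR diffeomorphism $\phi\colon M^{3}\rightarrow D^{3}(-1)$ and a constant $C\geq 1$ with $C^{-1}g_{\theta}\leq\phi^{\ast}g_{\widetilde{\theta}}\leq Cg_{\theta}$, where $\widetilde{\theta}$ denotes the standard pseudo-Hermitian structure of $D^{3}(-1)$. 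As noted just before Example~\ref{example6.1}, the inverse $h:=\phi^{-1}\colon D^{3}(-1)\rightarrow M^{3}$ is again a CR-quasi-isometry with the same coefficient, i.e.\ $C^{-1}g_{\widetilde{\theta}}\leq h^{\ast}g_{\theta}\leq Cg_{\widetilde{\theta}}$; being a CR diffeomorphism it is in particular a CR map, so we are exactly in the situation of Corollary~\ref{corollary6.7} with the source being $D^{3}(-1)$ and the target being $M^{3}$.

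Next, Proposition~\ref{proposition6.3} says that $D^{3}(-1)$ is a CR Kobayashi hyperbolic manifold of order $\widetilde{\Lambda}$ for every $\widetilde{\Lambda}\in[\sqrt{2}/2,\infty)$; in particular it is hyperbolic of every order $\widetilde{\Lambda}\geq\Lambda_{0}$ with $\Lambda_{0}:=\sqrt{2}/2$. Feeding this, together with the CR-quasi-isometry $h$ and its coefficient $C$, into the ``in particular'' clause of Corollary~\ref{corollary6.7}, and observing that $C\geq 1$ forces $\max\{\sqrt{2}/2,\,\Lambda_{0}/C\}=\Lambda_{0}=\sqrt{2}/2$, one concludes that $M^{3}$ is a CR Kobayashi hyperbolic manifold of order $\Lambda$ for every $\Lambda\in[\sqrt{2}/2,\infty)$, which is precisely the assertion.

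So the proof is essentially formal, and the one point I would take care over is that ``CR Kobayashi hyperbolic of order $\Lambda$'' also encodes the well-definedness of $d_{M^{3}}^{K,\Lambda}$, i.e.\ that any two points of $M^{3}$ can be joined by a CR $K$-chain of order $\Lambda$. For $\Lambda\geq C/\sqrt{2}$ this is immediate, since $h$ is itself a CR map $D^{3}(-1)\rightarrow M^{3}$ whose generalized dilatation is everywhere at most $C/\sqrt{2}$ --- the eigenvalues of $dh^{t}dh$ relative to the Webster metrics lie in $[C^{-1},C]$, so $\lambda_{1}\leq C\leq\frac{C^{2}}{2}(\lambda_{2}+\lambda_{3})$ --- and hence $\bigl\{\{p,q\},\{\phi(p),\phi(q)\},\{h\}\bigr\}$ is a one-link CR $K$-chain of order $\Lambda$ between any prescribed $p,q$. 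For the remaining range $\sqrt{2}/2\leq\Lambda<C/\sqrt{2}$ the same connectivity, together with the positivity $d_{M^{3}}^{K,\Lambda}(p,q)>0$ for $p\neq q$ (which one gets by pushing a CR $K$-chain in $M^{3}$ back through $\phi$ to a chain of order $C\Lambda$ in $D^{3}(-1)$, a space on which $d^{K,C\Lambda}$ is a genuine distance by Proposition~\ref{proposition6.3}), is exactly what the ``in particular'' clause of Corollary~\ref{corollary6.7} provides --- using that $D^{3}(-1)$ is CR-homogeneous (its CR automorphism group, generated by the biholomorphisms of $B_{\mathbb{C}}^{1}(1)$ and the $\mathbb{R}$-translations, acts transitively) --- so in the write-up it suffices to cite that corollary. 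I therefore do not expect any real obstacle: the whole argument is bookkeeping around the substitution $\Lambda_{0}\mapsto\max\{\sqrt{2}/2,\Lambda_{0}/C\}$ of Corollary~\ref{corollary6.7}.
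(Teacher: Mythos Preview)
Your approach is correct and matches the paper's: the paper gives no explicit proof of Corollary~\ref{corollary6.8}, merely prefacing it with the sentence ``Using Theorem~\ref{theorem6.2}, Proposition~\ref{proposition6.3} and Theorem~\ref{theorem6.6}, we may construct many examples\ldots'', and your derivation via Proposition~\ref{proposition6.3} and Corollary~\ref{corollary6.7} (itself a consequence of Theorem~\ref{theorem6.6}) is exactly the intended route. Your extra care about the well-definedness of $d_{M^3}^{K,\Lambda}$ for $\Lambda\in[\sqrt{2}/2,\,C/\sqrt{2})$ goes beyond what the paper spells out; note, however, that the CR-homogeneity remark does not by itself produce CR maps $D^{3}(-1)\to M^{3}$ of dilatation $\leq\Lambda$ in that range, so this connectivity is really being absorbed into the ``in particular'' clause of Corollary~\ref{corollary6.7} just as the paper does.
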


It would also be interesting to investigate the "hyperbolicity problem"
corresponding to the Schwarz type lemmas for transversally holomorphic maps.
Since a transversally holomorphic map induces a map between the leaf-spaces
of two pseudo-Hermitian manifolds, it seems natural to investigate the
distance-decreasing property of this induced map. However, the leafspace of
a pseudo-Hermitian manifold is possibly non-Hausdorff and this may causes
some difficulty. We will study this issue in another paper.

Finally, we would like to mention that the authors in \cite{[CDRY]} have established some Schwarz type lemmas for some generalized holomorphic maps between Kahlerian and pseudo-Hermitian manifolds.

\bigskip

Yuxin Dong \ \ 

School of Mathematical Sciences

Fudan University

Shanghai, 200433, P. R. China

yxdong@fudan.edu.cn \ \ \ 

\bigskip

Yibin Ren

College of Mathematics and Computer Science

Zhejiang Normal University

Jinhua, 321004, Zhejiang, P.R. China

allenryb@outlook.com

\bigskip

Weike Yu

School of Mathematical Sciences

Fudan University

Shanghai, 200433, P. R. China

wkyu2018@outlook.com

\bigskip

\end{document}